\documentclass[12pt]{article}

\usepackage{multirow} 
\usepackage{graphicx} 

\usepackage{amsmath}
\usepackage{amsthm} 
\usepackage{hyperref}
\usepackage{amsmath}
\usepackage{amssymb}
\usepackage{fullpage}
\usepackage{placeins}
\usepackage{color}

\newtheorem{thm}{Theorem}[section]
\newtheorem{lem}{Lemma}[section]

\newtheorem{rmk}{Remark}[section]
\newtheorem{cor}{Corollary}[section]

\renewcommand{\th}{\theta}
\newcommand{\be}{\begin{equation}}
\newcommand{\ee}{\end{equation}}

\newcommand{\ur}{u_{r}}
\newcommand{\uthe}{u_{\theta}}

\newcommand{\dive}{\mbox{div}}

\newcommand{\pt}{\partial}


\newcommand{\ctthe}{\cot\theta}
\newcommand{\sthe}{\sin\theta}
\newcommand{\cthe}{\cos\theta}

\newcommand{\er}{e_r}
\newcommand{\ethe}{e_{\theta}}
\newcommand{\ephi}{e_{\phi}}

\renewcommand{\th}{\theta}

\newcommand{\bee}{\begin{equation*}}
\newcommand{\eee}{\end{equation*}}

\begin{document}
\title{Homogeneous solutions of stationary Navier-Stokes equations with isolated singularities on the unit sphere. II. Classification of axisymmetric no-swirl solutions}
\author{Li Li\footnote{Department of Mathematics, Harbin Institute of Technology, Harbin 150080, China. Email: lilihit@126.com}, 
YanYan Li\footnote{Department of Mathematics, Rutgers University, 110 Frelinghuysen Road, Piscataway, NJ 08854, USA. Email: yyli@math.rutgers.edu}, 
Xukai Yan\footnote{Department of Mathematics, Rutgers University, 110 Frelinghuysen Road, Piscataway, NJ 08854, USA. Email: xkyan@math.rutgers.edu}}
\date{}
\maketitle
\abstract{We classify all (-1)-homogeneous axisymmetric no-swirl solutions of incompressible stationary Navier-Stokes equations in three dimension which are smooth  on the unit sphere minus the south and north poles, parameterizing them as a four dimensional surface with boundary in appropriate function spaces. Then we establish smoothness properties of the solution surface in the four parameters. The smoothness properties will be used in a subsequent paper where we study the existence of (-1)-homogeneous axisymmetric solutions with non-zero swirl on $\mathbb{S}^2\setminus\{S,N\}$, emanating from the  four dimensional solution surface. }

\setcounter{section}{0}

\section{Introduction}
Consider the incompressible stationary Navier-Stokes equations (NSE) in $\mathbb{R}^3$: 
\begin{equation}\label{NS}
\left\{
\begin{split}
	& -\triangle u + u\cdot \nabla u +\nabla p = 0, \\
	& \dive\textrm{ } u=0.
\end{split}
\right. 
\end{equation} 

The equations are invariant under the scaling $u(x)\to \lambda u(\lambda x)$ and $p(x)\to \lambda^2 p(\lambda x)$, $\lambda>0$. We study solutions which are invariant under the scaling. For such solutions $u$ is (-1)-homogeneous and $p$ is (-2)-homogeneous.  We call them (-1)-homogeneous solutions according to the homogeneity of $u$.

We will write the NSE (\ref{NS}) in spherical coordinates $(r,\theta,\phi)$. A vector field $u$ can be written as
\[
	u = u_r \er + u_\theta \ethe + u_\phi \ephi, 
\]
where 
\[
	\er = \left(
	\begin{matrix}
		\sthe\cos\phi \\
		\sthe\sin\phi \\
		\cthe
	\end{matrix} \right),  \hspace{0.5cm}
	\ethe = \left(
	\begin{matrix}
		\cthe\cos\phi  \\
		\cthe\sin\phi   \\
		-\sthe	
	\end{matrix} \right), \hspace{0.5cm}
	\ephi = \left(
	\begin{matrix}
		-\sin\phi \\  \cos\phi \\ 0
	\end{matrix} \right).  
\]
A vector field $u$ is called axisymmetric if $u_r$, $u_{\theta}$ and $u_{\phi}$ are independent of $\phi$, and is called {\it no-swirl} if $u_{\phi}=0$.  

Landau discovered in \cite{Landau} a three parameter family of explicit (-1)-homogeneous  solutions of the stationary NSE (\ref{NS}), which are axisymmetric and with no swirl. These solutions are now called Landau solutions. The NSE (1) in the axisymmetric no-swirl case was converted earlier to an equation of Riccati type by Slezkin in [11].   The Riccati type equation was later independently derived
by Yatseyev using a  different method in \cite{Y1950}, where various exact solutions were given.
The Landau solutions were also independently found by Squire in \cite{Squire}. Tian and Xin proved in \cite{TianXin} that all (-1)-homogeneous, axisymmetric nonzero solutions of (\ref{NS}) in $C^{2}(\mathbb{R}^3\setminus\{0\})$ are Landau solutions. 
A classification of all (-1)-homogeneous solutions was given by \v{S}ver\'{a}k in \cite{Sverak}: all (-1)-homogeneous nonzero solutions of (\ref{NS}) in $C^2(\mathbb{R}^3\setminus\{0\})$ are Landau solutions. He also proved in the same paper that there is no nonzero (-1)-homogeneous 
solution of the stationary NSE in $C^{2}(\mathbb{R}^n\setminus\{0\})$ for $n\ge 4$.
 In dimension $n=2$, he characterized all such solutions satisfying a zero flux condition.

In \cite{Serrin}, Serrin modeled the tornado by (-1)-homogeneous axisymmetric solutions of the three dimensional incompressible stationary Navier-Stokes equations in the half space with zero boundary conditions and one singularity on the unit sphere.

More recently,  Karch and Pilarczyk showed in \cite{Karch} that Landau solutions are asymptotically stable under any $L^2$ perturbations. 
Classifications of homogeneous solutions to the $2$-dimensional and $3$-dimensional stationary Euler equations are studied respectively in \cite{Shvydkoy} by Luo and Shvydkoy, and in \cite{Shvydkoy3d} by Shvydkoy.
More studies on (-1)-homogeneous axisymmetric solutions of the stationary NSE (\ref{NS}) can be found in  \cite{Goldshtik}, \cite{PAULL1}, \cite{PAULL2}, \cite{PAULL3}, \cite{Serrin} and \cite{WangSteady}. 

We are interested in analyzing solutions which are smooth on $\mathbb{S}^2$ minus finite points.
 We have classified in \cite{LLY1} all axisymmetric no-swirl solutions with one singularity at the south pole. They form a two dimensional surface with boundary in appropriate function spaces.  These solutions are among the solutions found in \cite{Y1950}, where the solutions were obtained by a different method. It was proved in \cite{LLY1} that there are no other solutions with precisely one singularity at the south pole.   It was also proved there that there exists  a curve of axisymmetric solutions with nonzero swirl  emanating from every point in the interior and one part of the boundary of the surface of no-swirl solutions, while there is no such curve from any point on the other part of the boundary. Uniqueness  results of nonzero swirl solutions near  the no-swirl solution surface were also given in \cite{LLY1}.  Our main result in this paper is the classification of  all (-1)-homogeneous, axisymmetric no-swirl solutions of (\ref{NS}) which are smooth on $\mathbb{S}^2\setminus\{S,N\}$, where $S$ is the south pole and $N$ is the north pole. They are identified as a 4-dimensional surface with boundary in appropriate function spaces. We have established smoothness properties   of the solutions surface in the four parameters. These properties are used in a subsequent paper \cite{LLY3} where we study the existence of (-1)-homogeneous axisymmetric solutions with non-zero swirl on $\mathbb{S}^2\setminus\{S,N\}$, emanating from the 4-dimensional solution surface.

A (-1)-homogeneous axisymmetric  vector field $u$ is divergence free if and only if 
\begin{equation}\label{eq_divefree}
    \ur =-\frac{d \uthe}{d \theta} -  \uthe \ctthe.
    \end{equation}

     We work with a new unknown function and a different independent variable: 
\begin{equation}\label{Va}
	x:=\cthe, \quad U_\theta := u_\theta \sthe. 
\end{equation}

     As explained in \cite{LLY1}, $(u,p)$ is a (-1)-homogeneous axisymmetric no-swirl solution of (\ref{NS}) if and only if 
$u_{\phi}=0$,  $u_r$ is given by (\ref{eq_divefree}), $p$ is given by 
\[
      p=-\frac{1}{2}\left(\frac{d^2 \ur}{d\theta^2} + (\ctthe - \uthe) \frac{d \ur}{d\theta} + \ur^2 +\uthe^2\right),
\]
and $U_{\theta}$ satisfies, for  some constants $c_1,c_2,c_3\in \mathbb{R}$, 
 \be\label{eq:UthP}
	(1-x^2)U_\th' + 2x U_\th + \frac{1}{2} U_\th^2 = P_c(x):=c_1(1-x)+ c_2 (1+x) + c_3(1-x^2),
\ee
 where " $ ' $ " denotes differentiation in $x$, and $c:=(c_1,c_2,c_3)$. 

For each $c_1\ge -1$ and  $c_2\ge -1$, define
\be\label{sec2:eq:bar:c3}
	\bar{c}_3 (c_1,c_2) := -\frac{1}{2} \left( \sqrt{1+c_1} + \sqrt{1+c_2}  \right) \left( \sqrt{1+c_1} + \sqrt{1+c_2}  + 2 \right). 
\ee

Define 
\begin{equation*}
   J:=\{c\in \mathbb{R}^3 \mid c_1\geq -1, c_2\geq -1,c_3\ge \bar{c}_3(c_1,c_2)\}. 
\end{equation*}

\begin{thm}\label{thm1_1}
There exist $U^{\pm}_{\theta}(c)(x)\in C^0(J\times [-1,1])$, such that for every $c\in J$,  $U^{\pm}_{\theta}(c)\in C^{\infty}(-1,1)$ satisfy (\ref{eq:UthP}) in $(-1,1)$, and $U^{-}_{\theta}(c)\le U_{\theta}\le U^+_{\theta}(c)$ for any solution $U_{\theta}$ of (\ref{eq:UthP}) in $(-1,1)$. Moreover, if $c_3>\bar{c}_3(c_1,c_2)$,  $U^{-}_{\theta}<U^{+}_{\theta}$in $(-1,1)$,  and   if $c_3=\bar{c}_3(c_1,c_2)$,
\begin{equation}\label{eq1_1_1}
   U_{\theta}^+(c)=U^-_{\theta}(c)=U^*_{\theta}(c_1,c_2):= (1+\sqrt{1+c_1})(1-x)+(-1-\sqrt{1+c_2})(1+x).
\end{equation}
\end{thm}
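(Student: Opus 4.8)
\section*{Proof proposal for Theorem \ref{thm1_1}}

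The plan is to linearise the Riccati equation (\ref{eq:UthP}). Writing $U_\theta = 2(1-x^2)w'/w$ turns (\ref{eq:UthP}) into the second-order linear equation $w''=V_c(x)\,w$ with $V_c(x)=P_c(x)/\big(2(1-x^2)^2\big)$, and every solution $U_\theta$ of (\ref{eq:UthP}) smooth on $(-1,1)$ corresponds, up to a positive multiplicative constant, to a solution $w$ of the linear equation that is positive on $(-1,1)$. The endpoints $x=\pm1$ are regular singular points: near $x=1$ one has $V_c\sim c_2/\big(4(1-x)^2\big)$ with indicial exponents $(1\pm\sqrt{1+c_2})/2$, and near $x=-1$ one has $V_c\sim c_1/\big(4(1+x)^2\big)$ with exponents $(1\pm\sqrt{1+c_1})/2$; this is exactly where the constraints $c_1,c_2\ge-1$ (needed for real exponents) and the two branches enter. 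Translating back, a solution behaving like $(1-x)^\rho$ at $x=1$ has $U_\theta(1)=-4\rho$, so the admissible boundary values are $U_\theta(1)\in\{-2\pm2\sqrt{1+c_2}\}$ and, similarly, $U_\theta(-1)\in\{2\pm2\sqrt{1+c_1}\}$.

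First I would construct the extremal solutions as the images of the two principal (recessive) solutions $w_-$ and $w_+$ of $w''=V_cw$ at $x=-1$ and $x=+1$, respectively. Since the linear equation has no first-order term, any Wronskian $W[w_1,w_2]=w_1'w_2-w_1w_2'$ is constant, and $U_{\theta,1}-U_{\theta,2}=2(1-x^2)W[w_1,w_2]/(w_1w_2)$, so distinct Riccati solutions never cross and the whole family is totally ordered. I would set $U_\theta^+:=2(1-x^2)w_-'/w_-$ and $U_\theta^-:=2(1-x^2)w_+'/w_+$. For any global Riccati solution, coming from some positive $w$, evaluating the constant Wronskians near the endpoints via the recessive/dominant expansions shows $W[w_-,w]\ge0$ and $W[w,w_+]\ge0$, which yields $U_\theta^-\le U_\theta\le U_\theta^+$ with equality only for the principal solutions. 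The indicial analysis then shows $U_\theta^\pm$ extend continuously to $[-1,1]$ taking the larger, respectively smaller, endpoint roots.

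The key algebraic input is that the product $w=(1-x)^{(1+\sqrt{1+c_2})/2}(1+x)^{(1+\sqrt{1+c_1})/2}$, recessive at both endpoints simultaneously, is an exact positive solution of $w''=V_cw$ precisely when $c_3=\bar c_3(c_1,c_2)$ as in (\ref{sec2:eq:bar:c3}); substituting and matching the three coefficients of the quadratic $P_c$ forces this value of $c_3$, and the corresponding $U_\theta=2(1-x^2)w'/w$ is the linear function $U^*_\theta$ of (\ref{eq1_1_1}). This furnishes a positive solution at the threshold $c_3=\bar c_3$; since $\partial_{c_3}V_c=1/\big(2(1-x^2)\big)>0$ on $(-1,1)$, the Sturm comparison theorem propagates the existence of a positive solution, hence of $U_\theta^\pm$, to all $c_3\ge\bar c_3$, i.e. to all $c\in J$. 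When $c_3=\bar c_3$ the two principal solutions coincide, $W[w_-,w_+]=0$, forcing $U_\theta^+=U_\theta^-=U^*_\theta$; when $c_3>\bar c_3$ they are independent, $W[w_-,w_+]\ne0$, and $U_\theta^+-U_\theta^-=2(1-x^2)W[w_-,w_+]/(w_-w_+)>0$ throughout $(-1,1)$.

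I expect the main obstacle to be the \emph{joint} continuity of $U_\theta^\pm$ on $J\times[-1,1]$, rather than the pointwise construction. The difficulty is concentrated where the structure degenerates: at the parameter boundaries $c_1=-1$ or $c_2=-1$ the two indicial exponents collide and logarithmic solutions appear, and along the surface $c_3=\bar c_3$ the two principal solutions merge, so $w_\pm$ must be normalised so that they and the ratios $w_\pm'/w_\pm$ depend continuously on $c$ uniformly up to $x=\pm1$. I would handle this by normalising $w_\pm$ through the leading coefficient of the recessive expansion at the relevant endpoint, invoking continuous dependence of principal solutions on parameters on compact subintervals, and then controlling the behaviour near $x=\pm1$ by the explicit indicial exponents to upgrade interior convergence to uniform convergence on $[-1,1]$. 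The degenerate locus $c_3=\bar c_3$ is reached continuously because there both $U_\theta^\pm$ converge to $U^*_\theta$, which is manifestly continuous in $(c_1,c_2)$.
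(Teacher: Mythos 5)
Your linearisation $U_\theta=2(1-x^2)w'/w$ does convert (\ref{eq:UthP}) into $w''=P_c(x)\,w/\bigl(2(1-x^2)^2\bigr)$, and I have checked your key computations: the indicial exponents at $x=\pm1$ are as you state, $U_\theta(\pm1)$ is $\pm4$ times the relevant exponent (reproducing Lemmas \ref{lem:pre1} and \ref{lem:pre1'} and the constraints $c_1,c_2\ge-1$), and the doubly recessive product $w=(1-x)^{(1+\sqrt{1+c_2})/2}(1+x)^{(1+\sqrt{1+c_1})/2}$ solves the linear equation exactly when $c_3=\bar c_3(c_1,c_2)$, with $2(1-x^2)w'/w=U^*_\theta$. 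This is a genuinely different route from the paper, which never linearises: it builds Frobenius-type power series for $U_\theta$ itself at each endpoint (Lemmas \ref{lem:c1c2:LocalSeriesSol}, \ref{lem:c1c2:LocalSeriesSol'}), proves Riccati comparison/uniqueness lemmas (Lemma \ref{lem:c1c2:compare}, Corollary \ref{lem:c1c2:LocalUniqueness}), and extends the local solutions globally using $U^*_\theta$ as a barrier together with an a priori bound (Lemmas \ref{lem2_0_0}, \ref{lem:c1c2:>c3bar}, \ref{lem:c1c2:between}). Your version buys a transparent total ordering of all solutions from the constancy of the Wronskian, and a one-line existence proof for all $c_3\ge\bar c_3$ from Sturm comparison ($\partial_{c_3}V_c>0$, so disconjugacy propagates upward from the explicit threshold solution). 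The paper's version buys explicit coefficient bounds on the endpoint expansions, which are reused verbatim for the smoothness results (Lemma \ref{lem2_gamma_1}, Theorem \ref{thm:c1c2}, Theorem \ref{propA_1}); your route would have to redo that work.

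Two points need more than assertion. First, the strict inequality $U^-_\theta<U^+_\theta$ for $c_3>\bar c_3$ is exactly the linear independence of the two principal solutions $w_-$ and $w_+$, and you state it without proof. It does not follow from Sturm comparison alone; you need an argument ruling out a solution recessive at both endpoints when $c_3>\bar c_3$. One way inside your framework: if $w$ were doubly recessive for $c_3>\bar c_3$, Green's identity against $w^*$ gives $\int_{-1}^{1}(V_{\bar c}-V_c)w^*w\,dx=[w^{*\prime}w-w^*w']_{-1}^{1}$; the boundary terms vanish because both functions carry the same (larger) exponent at each end, while the integral is strictly negative --- a contradiction. The paper gets the same conclusion from the local uniqueness of the solution attaining the non-generic boundary value (Corollary \ref{lem:c1c2:LocalUniqueness'}).

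Second, your mechanism for continuity at the degenerate locus --- ``both $U^\pm_\theta$ converge to $U^*_\theta$'' as $c_3\downarrow\bar c_3$ --- cannot hold uniformly up to the endpoints, and this is precisely where the whole statement is delicate. For $c_3>\bar c_3$ one has $U^+_\theta(c)(1)=\tau_2'(c_2)$ (your $w_-$ is generically dominant at $x=1$), whereas $U^*_\theta(1)=\tau_1'(c_2)$; these differ by $4\sqrt{1+c_2}$ when $c_2>-1$, so the convergence is only locally uniform in $(-1,1)$ and the two-variable function $(c,x)\mapsto U^+_\theta(c)(x)$ has a jump at such corner points (symmetrically for $U^-_\theta$ at $x=-1$ when $c_1>-1$). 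Note that the paper's own endpoint-continuity input, Lemma \ref{lemAp_1'}, is likewise only stated for compact sets avoiding $\gamma=\gamma^-(c)$, so your plan is not worse off than the paper here; but you should not present uniform convergence to $U^*_\theta$ on $[-1,1]$ as the reason the degenerate locus is unproblematic --- it is exactly the problematic place, and any continuity claim there must exclude these corner points or be interpreted in $C^0_{loc}(-1,1)$.
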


\bigskip

Next, for $c\in J$, introduce 
\begin{equation*}
    \gamma^+(c):=U^+_{\theta}(c)(0), \quad \gamma^-(c):=U^-_{\theta}(c)(0).
\end{equation*}
Define
\begin{equation*}
I:= \{(c, \gamma)\in \mathbb{R}^4 \mid c_1\geq -1, c_2\geq -1, c_3\geq \bar{c_3}(c_1,c_2). \gamma^-(c)\leq \gamma \leq \gamma^+(c) \}.
\end{equation*}

\begin{thm}\label{thm1_2}
	  For each $(c,\gamma)$ in $I$,  
	 equation (\ref{eq:UthP}) has a unique solution $U^{c,\gamma}_{\theta}$ in $C^{\infty}(-1,1)\cap C^0[-1,1]$ satisfying $U^{c,\gamma}_{\theta}(0)=\gamma$.
Moreover, these are all (-1)-homogeneous axisymmetric no-swirl solutions of the Navier-Stokes equations (\ref{NS}) on $\mathbb{S}^2\setminus\{S,N\}$.  
\end{thm}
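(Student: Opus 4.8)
The plan is to reduce the entire statement to the analysis of the first-order Riccati ODE \eqref{eq:UthP}, regarded as a regular equation on the interior $(-1,1)$ with singular points at $x=\pm1$, using the extremal solutions $U^{\pm}_{\theta}(c)$ furnished by Theorem \ref{thm1_1} as barriers. Write \eqref{eq:UthP} as $U_{\theta}'=f(x,U_{\theta})$ with $f(x,U):=\bigl[P_c(x)-2xU-\tfrac{1}{2}U^2\bigr]/(1-x^2)$, which is smooth in $(x,U)$ on $(-1,1)\times\mathbb{R}$. Since $f$ is locally Lipschitz in $U$ and $x=0$ is a regular point ($1-x^2=1$ there), for each $\gamma$ the initial value problem with $U_{\theta}(0)=\gamma$ has a unique maximal smooth solution on a subinterval of $(-1,1)$. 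Because solution curves of a first-order ODE with local uniqueness cannot cross, and $U^{\pm}_{\theta}(c)$ are themselves solutions by Theorem \ref{thm1_1}, the condition $(c,\gamma)\in I$, i.e. $\gamma^-(c)=U^-_{\theta}(c)(0)\le\gamma\le U^+_{\theta}(c)(0)=\gamma^+(c)$, forces $U^-_{\theta}(c)\le U^{c,\gamma}_{\theta}\le U^+_{\theta}(c)$ throughout the interval of existence. As $U^{\pm}_{\theta}(c)\in C^0[-1,1]$ are finite, the solution cannot escape to infinity in the interior, hence extends to all of $(-1,1)$.

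Next I would establish continuity up to the endpoints. Near $x=1$ set $s=1-x$; then $1-x^2=(1+x)s$ and \eqref{eq:UthP} becomes, to leading order, $2s\,\frac{dU_{\theta}}{ds}=\tfrac{1}{2}(U_{\theta}-r_+)(U_{\theta}-r_-)$, where $r_\pm=-2\pm2\sqrt{1+c_2}$ are the roots obtained by evaluating \eqref{eq:UthP} at $x=1$ (real precisely because $c_2\ge-1$). For a bounded solution the sign of $\frac{dU_{\theta}}{ds}$ is governed near $s=0$ by the sign of $(U_{\theta}-r_+)(U_{\theta}-r_-)$, so $U_{\theta}$ is eventually monotone in $s$; being bounded it converges, and the limit must be $r_+$ or $r_-$, since any other limit would force $\frac{dU_{\theta}}{ds}\sim C/s$ and logarithmic blow-up. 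The same argument at $x=-1$ yields convergence to one of the roots $2\pm2\sqrt{1+c_1}$. Hence $U^{c,\gamma}_{\theta}\in C^0[-1,1]$; at the boundary values $\gamma=\gamma^{\pm}(c)$ this recovers $U^{\pm}_{\theta}(c)$, so existence holds for all of $I$.

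Uniqueness within $C^{\infty}(-1,1)\cap C^0[-1,1]$ is immediate: two such solutions with the same value $\gamma$ at $x=0$ agree on $(-1,1)$ by ODE uniqueness, hence on $[-1,1]$ by continuity. For the classification, let $U_{\theta}\in C^{\infty}(-1,1)\cap C^0[-1,1]$ be any solution of \eqref{eq:UthP} for some $c\in\mathbb{R}^3$. Evaluating the equation at $x=\pm1$ shows $U_{\theta}(\pm1)$ must be real roots of the respective quadratics, forcing $c_1,c_2\ge-1$. The remaining constraint $c_3\ge\bar{c}_3(c_1,c_2)$ is obtained by comparison with the explicit linear solution $U^*_{\theta}(c_1,c_2)$ of \eqref{eq:UthP} at $c_3=\bar{c}_3$: setting $W=U_{\theta}-U^*_{\theta}$ yields a linear equation $(1-x^2)W'+[2x+\tfrac{1}{2}(U_{\theta}+U^*_{\theta})]W=(c_3-\bar{c}_3)(1-x^2)$, and integration shows a globally bounded, boundary-continuous $W$ can exist only when $c_3\ge\bar{c}_3$. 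Thus $c\in J$, and the sandwich property of Theorem \ref{thm1_1} at $x=0$ gives $\gamma^-(c)\le U_{\theta}(0)\le\gamma^+(c)$, so $(c,\gamma)\in I$ and $U_{\theta}=U^{c,\gamma}_{\theta}$. Finally, the reduction recalled in the introduction (and proved in \cite{LLY1}) identifies these $U_{\theta}$ with exactly the $(-1)$-homogeneous axisymmetric no-swirl solutions of \eqref{NS} on $\mathbb{S}^2\setminus\{S,N\}$.

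The main obstacle is the endpoint analysis rather than the soft ODE theory used in the interior: one must show that a bounded interior solution genuinely converges (and does not oscillate) at $x=\pm1$, and pin down the necessity of $c_3\ge\bar{c}_3(c_1,c_2)$. Both require the careful local study of the singular points $x=\pm1$ and the comparison with the explicit solution $U^*_{\theta}$, and I expect these to be the most delicate parts of the argument.
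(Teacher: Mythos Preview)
Your proposal follows essentially the same architecture as the paper: barrier/comparison with $U^{\pm}_{\theta}(c)$ for interior existence, endpoint analysis for $C^0[-1,1]$, and then Lemmas~\ref{lem:pre1}--\ref{lem:pre1'} plus the $c_3\ge\bar c_3$ obstruction for the classification. Two points deserve sharpening.

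First, your ``eventual monotonicity'' sketch at $x=\pm1$ is the right picture but is not yet a proof: the sign of $(U_\theta-r_+)(U_\theta-r_-)$ depends on $U_\theta$ itself, and the $O(1-x)$ corrections allow crossings of $r_{\pm}$, so monotonicity is not immediate. The paper does not argue this directly either; it invokes Proposition~7.1 of \cite{LLY1} (through Lemmas~\ref{lem:pre1} and \ref{lem:pre1'}) to conclude that $\lim_{x\to\pm1}U_\theta$ exists and $(1\mp x)U_\theta'\to 0$. A clean self-contained version is: if $U_\theta$ fails to converge, any accumulation point of values at local extrema must be a root $r_\pm$ (send $x\to1$ in the equation at points where $U_\theta'=0$), and then the sign of $U_\theta'$ strictly between $r_-$ and $r_+$ forbids repeated transitions, ruling out oscillation.

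Second, your claim that ``integration shows'' $c_3\ge\bar c_3$ from the linear equation for $W=U_\theta-U^*_\theta$ is not substantiated, and in fact the integrating-factor route is awkward because the weight can be non-integrable at the endpoints. What actually works---and what the paper does in Lemma~\ref{lem:c1c2:<c3bar}---is a sign/crossing argument on $W$: from $U_\theta(-1)\le U^*_\theta(-1)$ one shows (using the local comparison Lemma~\ref{lem:c1c2:compare} to cover the equality case $U_\theta(-1)=\tau_2(c_1)$) that $W<0$ near $x=-1$; if $W(\bar x)>0$ somewhere, the first zero $\xi$ of $W$ would satisfy $W'(\xi)\ge0$, contradicting $(1-\xi^2)W'(\xi)=(c_3-\bar c_3)(1-\xi^2)<0$. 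The symmetric argument near $x=1$ rules out $W<0$ somewhere. So the step is a comparison, not an integration; your linear equation for $W$ is exactly the right object, but the conclusion comes from this zero-crossing contradiction rather than from an explicit integral formula.
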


Clearly, $U^{c,\gamma^{\pm}(c)}_{\theta}=U^{\pm}_{\theta}(c)$ for $c\in J$. Theorem \ref{thm1_1} and Theorem \ref{thm1_2} give a classification of all (-1)-homogeneous axisymmetric, no-swirl solutions of Navier-Stokes equations in $C^2(\mathbb{S}^2\setminus \{S, N\})$. 
There is a 1-1 correspondence between $U_\theta^{c,\gamma}$ and points in the four dimensional surface $I$. 

 Recall that Landau solutions are 
 \[
    U_{\theta}(x)=\frac{2(1-x^2)}{x+\lambda},\quad |\lambda|>1,
 \]
 and they correspond to $U_{\theta}^{c,\gamma}$ with $c=0$ and $\gamma\in(-2,2)\setminus\{0\}$.
	
	The solutions in $C^{\infty}(\mathbb{S}^2\setminus\{S\})$ correspond to $U_{\theta}^{c,\gamma}$ with $c_2=0$, $c_1=-2c_3$ and $\gamma^-(c)<\gamma\le \gamma^+(c)$.

\bigskip

Define 
\be\label{eq:tau12}
	\tau_1(c_1) := 2-2\sqrt{1+c_1}, \quad \tau_2(c_1) := 2+2\sqrt{1+c_1}, 
\ee
\be\label{eq:tau12'}
	\tau_1'(c_2) := -2-2\sqrt{1+c_2}, \quad \tau_2'(c_2) := -2+2\sqrt{1+c_2}. 
\ee

\begin{thm}\label{thm1_3}
Suppose $(c,\gamma)\in I$, then

(i) If $c_3>\bar{c}_3 (c_1,c_2)$, then $\gamma^-(c)<\gamma^+(c)$, and for any $\gamma^-(c) \leq \gamma < \gamma' \leq \gamma^+(c)$, $U_\theta^{c,\gamma} < U_\theta^{c,\gamma'}$ in $(-1,1)$. Moreover,		
\[
  \begin{split}
	   & \hspace{0.8cm} \{(x,y)\mid -1<x<1, U_{\theta}^{c,\gamma^-(c)} (x)\leq y \leq U_\theta^{c,\gamma^+(c)}(x) \}\\
	   & = \bigcup_{\gamma\in [\gamma^-(c),\gamma^+(c)]} \{(x,U_{\theta}^{c,\gamma}(x)) \mid -1<x<1\}.
	     \end{split}
	\]

	(ii) 
	\begin{equation*}
		U_\theta^{c,\gamma}(-1) := \left\{
		\begin{array}{ll}
			\tau_2(c_1), &  \mbox{when } \gamma=\gamma^+(c), \\
			\tau_1(c_1), &  \mbox{otherwise},
		\end{array}
		\right. \quad 
		U_\theta^{c,\gamma}(1) := \left\{
		\begin{array}{ll}
			\tau_1'(c_2), &  \mbox{when } \gamma=\gamma^-(c), \\
			\tau_2'(c_2), &  \mbox{otherwise}. 
		\end{array}
		\right. 
	\end{equation*}
\end{thm}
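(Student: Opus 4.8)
The plan is to handle part (i) by comparison and continuous-dependence arguments for the first-order equation (\ref{eq:UthP}), and part (ii) by combining the algebraic constraint forced at the singular endpoints $x=\pm1$ with a rigidity estimate (at most one solution can reach the larger/smaller root) and the boundary continuity and explicit degenerate solution supplied by Theorem \ref{thm1_1}. For part (i), $\gamma^-(c)<\gamma^+(c)$ when $c_3>\bar{c}_3(c_1,c_2)$ is immediate from Theorem \ref{thm1_1} (which gives $U^-_\theta(c)<U^+_\theta(c)$ on $(-1,1)$) evaluated at $x=0$. For strict monotonicity in $\gamma$, set $w:=U_\theta^{c,\gamma'}-U_\theta^{c,\gamma}$ with $\gamma<\gamma'$; subtracting the two copies of (\ref{eq:UthP}) (same right-hand side $P_c$) linearizes the quadratic term and gives the homogeneous linear ODE
\begin{equation*}
(1-x^2)w'+\Bigl(2x+\tfrac12\bigl(U_\theta^{c,\gamma}+U_\theta^{c,\gamma'}\bigr)\Bigr)w=0,
\end{equation*}
so $w(x)=w(0)\exp\!\bigl(-\int_0^x\frac{2s+\frac12(U_\theta^{c,\gamma}+U_\theta^{c,\gamma'})}{1-s^2}\,ds\bigr)$. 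As $w(0)=\gamma'-\gamma>0$ and the exponential is positive on $(-1,1)$, we get $w>0$, i.e.\ $U_\theta^{c,\gamma}<U_\theta^{c,\gamma'}$. The set identity then follows: ``$\supseteq$'' holds since every $U_\theta^{c,\gamma}$ lies between $U^-_\theta=U_\theta^{c,\gamma^-}$ and $U^+_\theta=U_\theta^{c,\gamma^+}$ by Theorem \ref{thm1_1}; for ``$\subseteq$'', fix $x_0\in(-1,1)$ and observe that $\gamma\mapsto U_\theta^{c,\gamma}(x_0)$ is continuous (continuous dependence on the initial value $U_\theta^{c,\gamma}(0)=\gamma$ for the smooth interior ODE) and strictly increasing, so by the intermediate value theorem its image is exactly $[U_\theta^{c,\gamma^-}(x_0),U_\theta^{c,\gamma^+}(x_0)]$, producing for each admissible height $y_0$ a $\gamma$ whose graph passes through $(x_0,y_0)$.

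For part (ii), I first pin down the admissible boundary values. Since $U_\theta^{c,\gamma}\in C^0[-1,1]$, the limit $U_\theta^{c,\gamma}(-1)$ exists; writing (\ref{eq:UthP}) as $(1-x^2)U_\theta'=P_c(x)-2xU_\theta-\frac12U_\theta^2$ and letting $x\to-1^+$, a nonzero limit of the right-hand side would force $U_\theta'\sim \text{const}/(1+x)$, which is non-integrable and contradicts the finite boundary value; hence the limit vanishes, giving $\frac12U_\theta(-1)^2-2U_\theta(-1)-2c_1=0$, i.e.\ $U_\theta^{c,\gamma}(-1)\in\{\tau_1(c_1),\tau_2(c_1)\}$, and the same computation at $x=1$ gives $U_\theta^{c,\gamma}(1)\in\{\tau_1'(c_2),\tau_2'(c_2)\}$. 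Next I establish the rigidity that at most one solution can attain the larger root $\tau_2(c_1)$ at $x=-1$: if two distinct solutions $U_1<U_2$ both tended to $\tau_2(c_1)$, then in the formula for $w=U_2-U_1$ above the integrand behaves like $\frac{2s+\tau_2(c_1)}{1-s^2}\sim\frac{\sqrt{1+c_1}}{1+s}$ near $s=-1$ (since $-2+\tau_2(c_1)=2\sqrt{1+c_1}>0$ when $c_1>-1$), so the exponent $-\int_0^x$ tends to $+\infty$ and $w(x)\to+\infty$, contradicting $w\to0$. The mirror estimate at $x=1$ (using $2+\tau_1'(c_2)=-2\sqrt{1+c_2}<0$) shows at most one solution attains $\tau_1'(c_2)$ there.

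Finally I identify the special solutions. When $c_1>-1$ the roots satisfy $\tau_1(c_1)<\tau_2(c_1)$, and by Theorem \ref{thm1_1} the map $c_3\mapsto U^+_\theta(c)(-1)$ is continuous on the connected interval $[\bar{c}_3(c_1,c_2),\infty)$ with values in the two-point set $\{\tau_1(c_1),\tau_2(c_1)\}$; since the explicit formula (\ref{eq1_1_1}) gives $U^*_\theta(c_1,c_2)(-1)=\tau_2(c_1)$ at $c_3=\bar{c}_3(c_1,c_2)$, connectedness forces $U^+_\theta(c)(-1)=\tau_2(c_1)$ for all $c_3\ge\bar{c}_3(c_1,c_2)$ (the case $c_1=-1$ being trivial as $\tau_1=\tau_2$). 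Combined with the rigidity and the strict monotonicity from part (i) (so $U_\theta^{c,\gamma}<U^+_\theta$ for $\gamma<\gamma^+(c)$), this yields $U_\theta^{c,\gamma^+(c)}(-1)=\tau_2(c_1)$ and $U_\theta^{c,\gamma}(-1)=\tau_1(c_1)$ for every $\gamma<\gamma^+(c)$; the symmetric argument at $x=1$, anchored by $U^*_\theta(c_1,c_2)(1)=\tau_1'(c_2)$ and tracking $U^-_\theta$, gives $U_\theta^{c,\gamma^-(c)}(1)=\tau_1'(c_2)$ and $U_\theta^{c,\gamma}(1)=\tau_2'(c_2)$ for $\gamma>\gamma^-(c)$. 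The main obstacle is precisely this last identification step: $x=\pm1$ are singular points where uniqueness and continuous dependence fail, so one cannot select the root naively, and the essential leverage is the interplay of the endpoint rigidity estimate with the boundary continuity $U^\pm_\theta\in C^0(J\times[-1,1])$ and the explicit degenerate solution, both furnished by Theorem \ref{thm1_1}.
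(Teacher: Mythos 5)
Your proof is correct, and while much of it runs parallel to the paper's machinery, the final identification of the endpoint values is done by a genuinely different route. The paper proves part (ii) constructively: it builds $U_\theta^+(c)$ as the local power-series solution with prescribed value $\tau_2(c_1)$ at $x=-1$ (Lemma \ref{lem:c1c2:LocalSeriesSol}), extends it globally and reads off $U_\theta^+(c)(1)=\tau_2'(c_2)$ from the ordering with $U_\theta^-(c)$ plus local uniqueness at the other pole (Lemma \ref{lem:c1c2:>c3bar}, Corollary \ref{lem:c1c2:LocalUniqueness'}), and then shows every other solution sits strictly below and must take the smaller root (Lemma \ref{lem:c1c2:between}). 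You instead take $U_\theta^{\pm}(c)$ as the abstract extremal solutions from Theorem \ref{thm1_1} and identify $U_\theta^+(c)(-1)$ by a connectedness argument: the map $c_3\mapsto U_\theta^+(c)(-1)$ is continuous with values in the two-point set $\{\tau_1(c_1),\tau_2(c_1)\}$ and equals $\tau_2(c_1)$ at $c_3=\bar c_3(c_1,c_2)$ by the explicit degenerate solution (\ref{eq1_1_1}), hence everywhere. This is an elegant shortcut that avoids re-deriving the power series; your endpoint rigidity estimate (divergence of the integrating factor when two ordered solutions share the value $\tau_2(c_1)$ at $x=-1$ with $c_1>-1$) is essentially the paper's Lemma \ref{lem:c1c2:compare}(i)/Corollary \ref{lem:c1c2:LocalUniqueness}, and your integrating-factor monotonicity and intermediate-value foliation in part (i) match Lemma \ref{lem:c1c2:foliate} in substance.

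One caveat you should make explicit: the paper's proof of the continuity asserted in Theorem \ref{thm1_1} is not entirely prior to Theorem \ref{thm1_3} --- the continuity of $U_\theta^+(c)(x)$ at $x=1$ is deferred to Lemma \ref{lemAp_1'}, whose proof (via Lemma \ref{lemAp_1}) invokes Theorem \ref{thm1_3}(ii). Your argument survives because you only use continuity of $c\mapsto U_\theta^+(c)(-1)$ and of $c\mapsto U_\theta^-(c)(1)$, and these are established independently through the uniform power-series expansions of Lemmas \ref{lem:c1c2:LocalSeriesSol} and \ref{lem:c1c2:LocalSeriesSol'}; but citing ``Theorem \ref{thm1_1}'' wholesale for these facts risks circularity, so you should point to the endpoint where the series expansion lives rather than to the full continuity statement.
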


In addition to the continuity of $\gamma^+(c)$ and $\gamma^-(c)$ in $J$, they have further smoothness properties.

\begin{thm}\label{thm:c1c2}
$\gamma^+(c)$ is  in $C^{\infty}(J\setminus\{c\mid c_1=-1\})$, and $\gamma^+(-1,c_2,c_3)$ is  in $C^{\infty}(J\cap\{c\mid c_1=-1\})$  as a function of $(c_2,c_3)$. $\gamma^-(c)$ is  in $C^{\infty}(J\setminus\{c\mid c_2=-1\})$, and $\gamma^-(c_1,-1,c_3)$ is  in $C^{\infty}(J\cap\{c \mid c_2=-1\})$  as a function of $(c_1,c_3)$.
\end{thm}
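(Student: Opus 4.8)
The plan is to base everything on the characterization of $U^+_\theta$ supplied by Theorem \ref{thm1_3}(ii): among all solutions of (\ref{eq:UthP}) on $(-1,1)$, $U^+_\theta(c)$ is the unique one attaining at the singular endpoint $x=-1$ the larger root $\tau_2(c_1)=2+2\sqrt{1+c_1}$ of $\frac12 a^2-2a=2c_1$, every other solution attaining $\tau_1(c_1)$ there. Hence $\gamma^+(c)=U^+_\theta(c)(0)$ is produced by solving a singular initial value problem for (\ref{eq:UthP}) at $x=-1$ and flowing to $x=0$, and I would prove smoothness by first showing the selected solution depends smoothly on $c$ on a short interval $[-1,-1+\delta]$, and then transporting that smoothness to $x=0$ through the regular part of the equation. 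Before that, a symmetry reduction: the map $U_\theta(x)\mapsto -U_\theta(-x)$ carries solutions of (\ref{eq:UthP}) with parameter $(c_1,c_2,c_3)$ to solutions with parameter $(c_2,c_1,c_3)$, interchanges maximal and minimal solutions, and gives $\gamma^+(c_1,c_2,c_3)=-\gamma^-(c_2,c_1,c_3)$. Since $J$ and $\bar{c}_3$ are symmetric in $c_1,c_2$, this reduces the $\gamma^-$ statements to the $\gamma^+$ statements, so it suffices to treat $\gamma^+$.

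For $c_1>-1$ set $\mu:=\sqrt{1+c_1}>0$, $s:=1+x$, and $v:=U_\theta-\tau_2(c_1)$. Substituting into (\ref{eq:UthP}) and dividing by $s(2-s)$ yields a Riccati equation with a regular singular point at $s=0$ whose linear part has integrating factor $\Phi(s;c)=s^{\mu}(2-s)^{-(2+\mu)}$; the solution attaining $\tau_2(c_1)$ is exactly the one with $v(0)=0$, which solves
\[
 v(s)=\frac{1}{\Phi(s;c)}\int_0^s \Phi(\sigma;c)\Big[\frac{h(c)-c_3\sigma}{2-\sigma}-\frac{v(\sigma)^2}{2\sigma(2-\sigma)}\Big]\,d\sigma, \qquad h(c)=-c_1+c_2+2c_3-4-4\mu.
\]
Because $\mu>0$ the weight $\Phi$ renders the right-hand side a contraction, uniformly for $c$ near a base point, on a small ball in $\{v\in C[0,\delta]:|v(s)|\le Cs\}$, giving a unique fixed point with $v\sim \tfrac{h(c)}{2(1+\mu)}\,s$; this is $U^+_\theta$. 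The operator depends $C^\infty$ on $c$ when $c_1>-1$ (where $\mu$ and $h$ are smooth), so by the uniform contraction principle the fixed point, and hence $U^+_\theta(c)(-1+\delta)$ together with its $x$-derivatives, is $C^\infty$ in $c$. Smooth dependence on initial data and parameters for the regular equation (\ref{eq:UthP}) on $[-1+\delta,0]$, where $1-x^2>0$, then propagates this to $\gamma^+(c)=U^+_\theta(c)(0)$, giving $\gamma^+\in C^\infty(J\setminus\{c_1=-1\})$; on the face $c_3=\bar{c}_3(c_1,c_2)$ one uses instead the explicit formula (\ref{eq1_1_1}), which is consistent.

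On the slice $c_1=-1$ one has $\mu=0$ and the two roots coalesce at $\tau_1=\tau_2=2$, so the endpoint value no longer separates $U^+_\theta$ from the other solutions and the singular point degenerates. The same substitution now produces an equation whose linear part is regular at $s=0$ (integrating factor $(2-s)^{-2}$), only the nonlinear term $v^2/(2s(2-s))$ remaining singular. A near-identical contraction in $\{|v(s)|\le Cs\}$ selects a smooth branch $v\sim \beta(c_2,c_3)\,s$, whereas every other solution exhibits the slow behavior $v\sim 4/\ln s\to 0^-$, which violates $|v|\le Cs$; a direct comparison of the two shows the fast branch is the largest solution near $x=-1$ and therefore equals $U^+_\theta$. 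Since no $\sqrt{1+c_1}$ enters on this slice, the contraction depends $C^\infty$ on $(c_2,c_3)$, and propagating to $x=0$ as before yields the asserted $C^\infty$ dependence of $\gamma^+(-1,c_2,c_3)$ on $(c_2,c_3)$.

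I expect the main obstacle to be the smoothness in $c$ of the separatrix near the regular singular point in the case $c_1>-1$: because the indicial exponent $\mu=\sqrt{1+c_1}$ itself depends on $c_1$, differentiating $\Phi$ in $c_1$ generates factors of $\ln\sigma$, and one must verify that the weighted integrals $\Phi(s)^{-1}\int_0^s \Phi(\sigma)(\ln\sigma)^k(\cdots)\,d\sigma$ stay bounded (they behave like $s(\ln s)^k$) so that all $c$-derivatives of the fixed point exist and are continuous up to $s=0$, and that the contraction constants can be chosen uniformly as $c_1$ stays bounded away from $-1$. A secondary subtlety is the degenerate double-root analysis on the slice $c_1=-1$, where the fast branch must be correctly identified as the maximal solution.
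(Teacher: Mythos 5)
Your proposal is essentially correct and shares the paper's overall architecture (establish smooth dependence of $U_\theta^+(c)$ on $c$ locally near the singular endpoint $x=-1$, propagate to $x=0$ through the regular region by standard ODE theory, and reduce $\gamma^-$ to $\gamma^+$ via $U_\theta(x)\mapsto -U_\theta(-x)$, which is exactly how the paper generates its primed lemmas), but the key local step is done by a genuinely different mechanism. The paper (Lemma \ref{lem2_gamma_1} and Corollary \ref{cor2_7}) works with the convergent power series $U_\theta^+(c)=\tau_2(c_1)+\sum_n a_n(c)(1+x)^n$ from Lemma \ref{lem:c1c2:LocalSeriesSol} and proves by induction the coefficient bounds $|\partial_c^\alpha a_n|\le a^{n(|\alpha|+1)}$, so that the term-by-term differentiated series converges on a smaller interval; this handles $c_1>-1$ and $c_1=-1$ by the same recursion and yields analyticity in $x$, at the cost of the combinatorial induction. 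You instead recast the selection of the $\tau_2$-branch as a fixed point of a weighted Volterra operator with integrating factor $s^{\mu}(2-s)^{-(2+\mu)}$, $\mu=\sqrt{1+c_1}$, and invoke the uniform contraction principle; your leading asymptotic $v\sim \frac{h(c)}{2(1+\mu)}s$ agrees with the paper's $a_1$, and the two subtleties you flag are the right ones: the $\ln\sigma$ factors produced by $\partial_{c_1}$ of $s^\mu$ (which indeed only cost powers of $\ln s$ against the extra factor of $s$, so all $c$-derivatives of the fixed point stay bounded), and the identification of the contraction branch with the maximal solution. For $c_1>-1$ the latter is immediate from the endpoint value $\tau_2(c_1)$ together with Corollary \ref{lem:c1c2:LocalUniqueness}; on the slice $c_1=-1$ your identification needs the fact that all non-maximal solutions obey the slow asymptotics $U_\theta-2\sim 4/\ln(1+x)$ (the content of Lemma \ref{lemAp_2}, imported from Theorem 1.3 of \cite{LLY1}), so that the unique $O(1+x)$ branch must be $U_\theta^+$; this is available in the paper and your comparison argument then closes. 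Your approach is softer and avoids the coefficient estimates; the paper's gives more explicit quantitative control (used again later for Theorem \ref{propA_1}). One small inaccuracy: on the boundary face $c_3=\bar c_3(c_1,c_2)$ you do not need to switch to the explicit formula (\ref{eq1_1_1}) --- the local fixed-point construction near $x=-1$ is insensitive to $c_3$ versus $\bar c_3$ and remains smooth up to that face; the formula merely confirms consistency.
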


\bigskip

\bigskip

We also have the smoothness properties of $U^{c,\gamma}_{\theta}$ in $(c,\gamma)$.  Let the subsets $J_k$, $1\le k\le 4$, of $J$ be defined as 
\begin{equation*}
   \begin{split}
      & J_1:=\{c\in J \mid c_1>-1,c_2>-1,c_3>\bar{c}_3\},\quad J_2:=\{c\in J \mid c_1=-1,c_2>-1,c_3>\bar{c}_3\},\\
      & J_3:=\{c\in J \mid c_1>-1,c_2=-1,c_3>\bar{c}_3\},\quad J_4:=\{c\in J \mid c_1=-1,c_2=-1,c_3>\bar{c}_3\}.
   \end{split}
\end{equation*}

We define the following subsets of $I$:
for $1\le k\le 4$, let
\[
\begin{split}
    & I_{k,1}:=\{(c,\gamma)\in I \mid c\in J_k,\gamma^-(c)<\gamma<\gamma^+(c)\}, \\
    & I_{k,2}:=\{(c,\gamma)\in I \mid c\in J_k,\gamma=\gamma^+(c)\},\\
    & I_{k,3}:=\{(c,\gamma)\in I \mid c\in J_k,\gamma=\gamma^-(c)\}.
    \end{split}
\]


As mentioned earlier, the following estimates of $U^{c,\gamma}_{\theta}$ are needed in our next paper on the existence of (-1)-homogeneous axisymmetric solutions of (\ref{NS}) with nonzero swirl on $\mathbb{S}^2\setminus\{S,N\}$.

\begin{thm}\label{propA_1}
Let $K$ be a compact set contained in one of $I_{k,l}$, $1\le k\le 4$, $l=1,2,3$. Then $U^{c,\gamma}_{\theta}$ is in  $C^{\infty}(K\times(-1,1))$. Moreover,	

	(i) If  $k=1$ and $l=1,2,3$, or $(k,l)=(2,2)$ or $(3,3)$, then for $-1<x<1$, 
	\begin{equation}\label{eqA_P_1}
		|\pt_c^{\alpha} \pt_\gamma^j U^{c,\gamma}_\theta(x)|\le C(m,K), \quad \mbox{for any } 0\le |\alpha|+j\leq m, 
	\end{equation}
	where $j=0$ if $l=2,3$; $\alpha_1=0$ if $k=2$; and $\alpha_2=0$ if $k=3$. 
	
	(ii) If $(k,l)=(2,1)$ or $(2,3)$ or $(4,3)$, then for $-1<x<1$,
	\begin{equation}\label{eqA_P_2}
		\left(\ln \frac{1+x}{3}\right)^2|\pt_c^{\alpha} \pt_\gamma^j U^{c,\gamma}_\theta(x)|\le C(m,K), \quad \mbox{for any } 1\le |\alpha|+j\leq m, \alpha_1=0,  
	\end{equation}
	where $j=0$ if $l=3$, and $\alpha_2=0$ if $k=4$. 

	(iii) If $(k,l)=(3,1), (3,2)$ or $(4,2)$, then for $-1<x<1$,
	\begin{equation}\label{eqA_P_3}
		\left(\ln \frac{1-x}{3} \right)^2 |\pt_c^{\alpha} \pt_\gamma^j U^{c,\gamma}_\theta(x)|\le C(m,K), \quad \mbox{for any } 1\le |\alpha|+j\leq m, \alpha_2=0, 
	\end{equation}
	where $j=0$ if $l=2$, and $\alpha_1=0$ if $k=4$. 

	(iv) If $(k,l)=(4,1)$, then  for $-1<x<1$, and for any $1\le |\alpha|+j\leq m$, $\alpha_1=\alpha_2=0$,
	\begin{equation}\label{eqA_P_4}
		\left(\ln \frac{1+x}{3}\right)^2\left(\ln \frac{1-x}{3}\right)^2  |\pt_c^{\alpha} \pt_\gamma^j U^{c,\gamma}_\theta (x)|\le C(m,K). 
	\end{equation}
\end{thm}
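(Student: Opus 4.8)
The plan is to reduce the whole statement to the study of the \emph{linear} ODEs satisfied by the parameter derivatives of $U^{c,\gamma}_\theta$, and then to read off their pole asymptotics from the integrating factor. Differentiating the Riccati relation (\ref{eq:UthP}) in a parameter $\mu\in\{c_1,c_2,c_3,\gamma\}$, the derivative $\psi_\mu:=\pt_\mu U^{c,\gamma}_\theta$ solves the linear first order equation
\[
(1-x^2)\psi_\mu' + \big(2x+U^{c,\gamma}_\theta\big)\psi_\mu = \pt_\mu P_c,
\]
with $\pt_{c_1}P_c=1-x$, $\pt_{c_2}P_c=1+x$, $\pt_{c_3}P_c=1-x^2$, $\pt_\gamma P_c=0$, while $U^{c,\gamma}_\theta(0)=\gamma$ gives $\psi_\gamma(0)=1$, $\psi_{c_i}(0)=0$. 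Introducing
\[
\Phi(x):=\exp\!\left(-\int_0^x \frac{2t+U^{c,\gamma}_\theta(t)}{1-t^2}\,dt\right),
\]
one has the clean identity $\pt_\gamma U^{c,\gamma}_\theta=\Phi$, and every higher derivative $\pt_c^\alpha\pt_\gamma^j U^{c,\gamma}_\theta$ solves the \emph{same} homogeneous operator with a forcing equal to a sum of products of strictly lower order derivatives coming from the quadratic term $\tfrac12 U_\theta^2$. This sets up an induction on $|\alpha|+j$, each step solved by variation of parameters against $\Phi$.

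The heart of the matter is the behavior of $\Phi$ at the two poles, extracted from the boundary values in Theorem~\ref{thm1_3}(ii). Near $x=1$ one has $\Phi\sim(1-x)^{(2+U^{c,\gamma}_\theta(1))/2}$ and near $x=-1$, $\Phi\sim(1+x)^{(2-U^{c,\gamma}_\theta(-1))/2}$; substituting the roots $\tau_i,\tau_i'$ converts the exponents into $\pm\sqrt{1+c_2}$ at $x=1$ and $\pm\sqrt{1+c_1}$ at $x=-1$. When $c_1>-1$ the interior and lower solutions approach the smaller root $\tau_1(c_1)$ and give a \emph{decaying} $\Phi\sim(1+x)^{\sqrt{1+c_1}}$, while for the upper solution the would-be growing mode of its $c$-derivatives is killed by the boundary conditions pinned at both poles, leaving bounded derivatives; this is the mechanism behind case (i). The degenerate strata $c_1=-1$ or $c_2=-1$ are where the two indicial roots coalesce and the analysis turns genuinely singular: I would show that at such a pole the generic branch approaches the double root through a \emph{logarithmic slow mode}, of size $|U^{c,\gamma}_\theta-2|\sim 4/|\ln(1+x)|$ at $x=-1$ (resp.\ $|U^{c,\gamma}_\theta+2|\sim 4/|\ln(1-x)|$ at $x=1$), obtained from the dominant balance $2(1+x)V'\approx-\tfrac12 V^2$ for $V:=U^{c,\gamma}_\theta-2$ in (\ref{eq:UthP}). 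This forces $\Phi\sim\big(\ln(1+x)\big)^{-2}$, exactly the reciprocal of the weight in (\ref{eqA_P_2}). The extremal branch at the degenerate pole (upper at $x=-1$, lower at $x=1$) instead selects the smooth power series mode, so its tangential $c$-derivatives stay bounded, which is why $(2,2)$ and $(3,3)$ remain in case (i).

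With these two regimes in hand, the assignment of strata to weights in (i)--(iv) is a bookkeeping of which pole is degenerate and whether the relevant branch is generic or extremal there: a $(1+x)$-weight appears exactly when $c_1=-1$ and $l\in\{1,3\}$, a $(1-x)$-weight exactly when $c_2=-1$ and $l\in\{1,2\}$, and both in the doubly degenerate interior case $(4,1)$. This also explains the constraints $\alpha_1=0$ on $k\in\{2,4\}$, $\alpha_2=0$ on $k\in\{3,4\}$, and $j=0$ on $l\in\{2,3\}$, since these suppress precisely the differentiations transverse to the stratum that would excite the singular mode. The induction is then closed by estimating, at each order, the integral $\Phi(x)\int_0^x \Phi(t)^{-1}(1-t^2)^{-1}F(t)\,dt$ with $F$ a product of lower order derivatives already carrying the inductive weight; the borderline integrals $\int (1+t)^{-1}\big(\ln(1+t)\big)^{-k}\,dt$ contribute one extra power of $1/\ln$, and the arithmetic of these powers against $\Phi\sim(\ln)^{-2}$ is exactly what keeps the single squared-logarithm weight from degrading as $m$ grows. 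Joint $C^\infty$ regularity on $K\times(-1,1)$ then follows from standard smooth dependence of ODE solutions on parameters over compact subintervals, glued to the uniform pole estimates.

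The hard step, I expect, is the degenerate-pole analysis together with its propagation: establishing the logarithmic slow mode and the smooth-mode selection uniformly for $c$ in a compact subset of a degenerate stratum, and then proving that the nonlinear self interaction does not worsen the power of $\ln$ beyond the stated $2$ at any order of differentiation. The power-law non-degenerate case is comparatively routine; the delicacy lies entirely in the coalescing-root regime, where the exponent $\sqrt{1+c_i}$ organizing the non-degenerate estimates vanishes and must be replaced by the slower logarithmic scale.
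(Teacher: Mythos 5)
Your proposal follows essentially the same route as the paper's proof: differentiate the Riccati equation in the parameters, solve the resulting linear equations with the integrating factor $e^{-a(x)}$, $a(x)=\int_0^x\frac{2s+U_\theta}{1-s^2}\,ds$, extract $e^{\mp a(x)}\le C(1+x)^{\pm(1-U_\theta(-1)/2)}$ in the non-degenerate case and $e^{\mp a(x)}\le C\left(\ln\frac{1+x}{3}\right)^{\mp 2}$ in the degenerate case from the slow mode $U_\theta-2\sim 4/\ln\frac{1+x}{3}$, and close an induction on $|\alpha|+j$; this is exactly the content of Lemmas \ref{lemAp_1}--\ref{lemB_3_2'}. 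Two caveats. First, for the extremal branches (cases $l=2,3$, and $(k,l)=(1,2),(2,2),(3,3)$) the variation-of-parameters formula based at $x=0$ is not available, since $\psi_{c_i}(0)=\partial_{c_i}\gamma^{\pm}(c)$ is precisely what has to be shown to exist; the paper instead establishes uniform bounds $|\partial_c^{\alpha}a_n|\le a^{n(|\alpha|+1)}$ on all parameter-derivatives of the coefficients of the analytic expansion of $U^{\pm}_\theta(c)$ at the pole where it takes the non-generic root (Lemma \ref{lem2_gamma_1}, Corollary \ref{cor2_7}) and then propagates across $(-1,1)$ by smooth dependence on the value at an interior point, so your ``pinned at both poles'' heuristic must be replaced by this series argument. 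Second, to obtain the clean weight $\left(\ln\frac{1+x}{3}\right)^{2}$ rather than $\left(\ln\frac{1+x}{3}\right)^{2-\epsilon}$ one needs the second-order refinement $\left|U_\theta-2-4/\ln\frac{1+x}{3}\right|\le C\left|\ln\frac{1+x}{3}\right|^{-2+\epsilon}$ (Lemma \ref{lemAp_4}) and not merely $(U_\theta-2)\ln\frac{1+x}{3}\to 4$; your sketch implicitly assumes this, but it is where the quantitative work at the coalescing-root pole actually lies.
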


To make the above notations clear, we point out  that if $(k,l)=(1,2)$, estimate (\ref{eqA_P_1}) means that for any compact set $K_1\subset J_1$, $\left|\partial_{c}^{\alpha}\left(U_{\theta}^{c,\gamma^+(c)}\right)\right|\le C(m,K_1)$. For other $I_{k,l}$ with $l=2$ or $3$,  the left hand sides in (\ref{eqA_P_1})-(\ref{eqA_P_3}) are interpreted analogously.

\begin{rmk}
   The estimates in Theorem \ref{propA_1} are optimal in each $I_{k,l}$, see examples in Theorem 3.1 in \cite{LLY1}. %
\end{rmk}

\noindent
{\bf Acknowledgment}. The work of the second named author is partially supported by NSF grant DMS-1501004.

%
%


\section{Proof of Theorems}

\subsection{Proof of Theorem \ref{thm1_1}, Theorem \ref{thm1_2} and Theorem \ref{thm1_3}}

As mentioned in Section 1, we work with the function $U_{\theta}$ and the variable $x$ given in (\ref{Va}). As explained in \cite{LLY1}, the stationary NSE (\ref{NS}) of (-1)-homogeneous axisymmetric no-swirl solutions can be reduced to (\ref{eq:UthP}) for some constants $c_1,c_2$ and $c_3$. We will show that the existence of solutions of (\ref{eq:UthP}) in $C^1(-1,1)$ depends on the constants $c_1,c_2$ and $c_3$. 

Recall the definitions in (\ref{eq:tau12}) and (\ref{eq:tau12'}).

\begin{lem}\label{lem:pre1}
	Let $\delta>0$, $U_\theta\in C^1(-1,-1+\delta)$ satisfy (\ref{eq:UthP}) with $c_1,c_2,c_3 \in \mathbb{R}$. Then $c_1\geq -1$ and $U_\theta(-1) := \lim_{x\to -1^+} U_\theta(x)$ exists and is finite. Moreover, 
	$$
		U_\theta(-1) = \tau_1(c_1) \quad \mbox{or} \quad \tau_2(c_1). 
	$$
\end{lem}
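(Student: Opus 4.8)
The plan is to reduce the entire statement to the single analytic fact that the one-sided limit $a:=\lim_{x\to -1^+}U_\theta(x)$ exists and is finite. Granting this, the remaining conclusions are immediate. Passing to the limit in (\ref{eq:UthP}) and using $1-x^2\to 0$ shows that $(1-x^2)U_\theta'=P_c-2xU_\theta-\frac12 U_\theta^2$ tends to $L:=2c_1+2a-\frac12 a^2$; if $L\neq 0$ then $U_\theta'\sim L/(2(1+x))$ is not integrable near $-1$, contradicting the existence of the finite limit $a$. Hence $L=0$, i.e. $a^2-4a-4c_1=0$. This quadratic yields both $1+c_1=(a-2)^2/4\ge 0$, so $c_1\ge -1$, and $a=2\pm 2\sqrt{1+c_1}\in\{\tau_1(c_1),\tau_2(c_1)\}$, which is exactly the assertion.

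To study the behavior as $x\to -1^+$ I would use the exact substitution $t:=-\ln(1+x)$, so $t\to+\infty$ as $x\to -1^+$, under which (\ref{eq:UthP}) becomes $\dot U_\theta=N(x,U_\theta)/(1-x)$ with $N(x,U):=\frac12 U^2+2xU-P_c(x)$, a quadratic in $U$ whose roots $\rho_\pm(x)=-2x\pm\sqrt{4x^2+2P_c(x)}$ satisfy $\rho_-(x)\to\tau_1$ and $\rho_+(x)\to\tau_2$. Since $1-x>0$ near $-1$, the sign of $\dot U_\theta$ equals the sign of $N$, so the dynamics is an asymptotically autonomous scalar flow with equilibria $\tau_1,\tau_2$. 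The first step is a no-blow-up lemma: $|U_\theta(x)|$ cannot tend to $\infty$ as $x\to -1^+$. Setting $V:=1/U_\theta$ one computes $V'=(\frac12+2xV-P_cV^2)/(1-x^2)$; if $V\to 0$ the numerator stays $\ge\frac14$ and $1-x^2\le 2(1+x)$, so $V'\ge 1/(8(1+x))$ and integrating gives $V(x)\le V(x_0)-\frac18\ln\frac{1+x_0}{1+x}\to -\infty$, contradicting $V\to 0$. With blow-up excluded I can rule out $c_1<-1$: then $\min_U N(x,U)=-2x^2-P_c(x)\to -2(1+c_1)>0$, so $N\ge\eta>0$ for $x$ near $-1$ and all $U$, whence $\dot U_\theta\ge\eta$ and $U_\theta\to+\infty$, contradicting the no-blow-up lemma. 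Thus $c_1\ge -1$ and $\tau_1,\tau_2$ are real.

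It remains to prove that the limit exists. Using the moving roots $\rho_\pm$ I would first establish $\tau_1\le\liminf_{x\to -1^+}U_\theta\le\limsup_{x\to -1^+}U_\theta\le\tau_2$: if $U_\theta$ ever exceeds $\tau_2+\rho$ then $U_\theta>\rho_+(x)$ gives $\dot U_\theta\ge\eta>0$, forcing blow-up, which is excluded; symmetrically, whenever $U_\theta\le\tau_1-\rho$ one has $U_\theta<\rho_-(x)$, so $\dot U_\theta\ge\eta>0$ drives $U_\theta$ upward and it cannot persist below $\tau_1-\rho$. Next I would rule out oscillation: if $m:=\liminf U_\theta<M:=\limsup U_\theta$, pick any level $v\in(m,M)\subset(\tau_1,\tau_2)$; then $\rho_-(x)<v<\rho_+(x)$ near $-1$, so $N(x,v)<0$ and every crossing of the level $v$ for $x$ close to $-1$ is downward, whereas oscillation between $m$ and $M$ requires infinitely many upward crossings, a contradiction. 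Hence $m=M$, the finite limit $a\in[\tau_1,\tau_2]$ exists, and the quadratic of the first paragraph pins it to $\tau_1$ or $\tau_2$. The degenerate case $c_1=-1$ is automatic, since then $\tau_1=\tau_2=2$ and the two-sided bound already forces $U_\theta\to 2$.

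The main obstacle is precisely the existence and finiteness of the limit: excluding both blow-up and oscillatory non-convergence as $x\to -1^+$ is where the real work lies, after which passing to the limit in the equation and identifying the value with $\tau_1$ or $\tau_2$ is routine. An alternative route is to linearize the Riccati equation via $U_\theta=2(1-x^2)w'/w$, which reduces (\ref{eq:UthP}) to $w''=P_c\,w/(2(1-x^2)^2)$, a linear equation with a regular singular point at $x=-1$ whose indicial roots are $(1\pm\sqrt{1+c_1})/2$; reality of these roots (needed so that $w$ is nonvanishing and $U_\theta$ stays $C^1$) again forces $c_1\ge -1$, and the leading behavior $w\sim(1+x)^{(1\pm\sqrt{1+c_1})/2}$ gives $U_\theta(-1)=2\pm 2\sqrt{1+c_1}$. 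I prefer the direct phase-line argument above, since it avoids the Frobenius complications that arise when the two indicial exponents coincide ($c_1=-1$) or differ by a positive integer.
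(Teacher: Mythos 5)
Your proposal is correct in substance, but it takes a genuinely different route from the paper. The paper's proof is essentially a citation: it invokes Proposition 7.1 of \cite{LLY1} to get that $\lim_{x\to-1^+}U_\theta(x)$ exists, is finite, and that $(1+x)U_\theta'(x)\to 0$, after which sending $x\to-1$ in (\ref{eq:UthP}) yields $-2U_\theta(-1)+\frac12U_\theta(-1)^2=2c_1$, hence $c_1=\frac14[U_\theta(-1)-2]^2-1\ge-1$ and $U_\theta(-1)\in\{\tau_1,\tau_2\}$ --- exactly your first paragraph, with $(1+x)U_\theta'\to0$ playing the role of your ``$L=0$'' step. What you do differently is to re-prove the outsourced hard part from scratch: your change of variable $t=-\ln(1+x)$, the no-blow-up lemma via $V=1/U_\theta$, the exclusion of $c_1<-1$ by positivity of $\min_UN$, the trapping $\tau_1\le\liminf\le\limsup\le\tau_2$ by the moving roots $\rho_\pm(x)$, and the monotone-crossing argument at interior levels $v\in(\tau_1,\tau_2)$ together give a self-contained dynamical proof of the existence of the limit. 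This buys independence from the companion paper at the cost of length; the paper's version buys brevity and the slightly stronger byproduct $(1+x)U_\theta'\to0$. Two small points to tighten: (a) when $4x^2+2P_c(x)<0$ near $x=-1$ (possible when $c_1=-1$) the roots $\rho_\pm(x)$ are complex, so the trapping argument should be phrased directly through the sign of $N(x,U)$ on the regions $\{|U-2|\ge\rho\}$ rather than through inequalities against $\rho_\pm(x)$; and (b) the step ``it cannot persist below $\tau_1-\rho$'' should be supplemented by the observation that the level $\tau_1-\rho$ cannot be crossed downward (since $\dot U_\theta>0$ there), so that once $U_\theta$ rises above it, $\liminf U_\theta\ge\tau_1-\rho$ follows. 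Both are routine.
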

\begin{proof}
	By Proposition 7.1 in \cite{LLY1}, $\lim_{x\to -1^+} U_\theta(x)$ exists and is finite and 
	$$
		\lim_{x\to -1^+}(1+x)U_\theta'(x) = 0. 
	$$ 
	Sending $x$ to $-1$ in (\ref{eq:UthP}) leads to 
	$$
		-2 U_\theta(-1) + \frac{1}{2}U_\theta(-1)^2 = 2 c_1.
	$$
	Thus,  
	$$
		c_1 = \frac{1}{4} \left[ U_\theta(-1) - 2 \right]^2 -1\geq -1, 
	$$
	and $U_\theta(-1) = \tau_1(c_1)$ or $\tau_2(c_1)$. 
\end{proof}

\addtocounter{lem}{-1}
\renewcommand{\thelem}{\thesection.\arabic{lem}'}%
\begin{lem}\label{lem:pre1'}
	Let $\delta>0$, $U_\theta\in C^1(1-\delta,1)$ satisfy (\ref{eq:UthP}) with $c_1,c_2,c_3 \in \mathbb{R}$. Then $c_2\geq -1$ and $U_\theta(1) := \lim_{x\to 1^-} U_\theta(x)$ exists and is finite. Moreover, 
	$$
		U_\theta(1)  = \tau_1'(c_2) \quad \mbox{or} \quad \tau_2'(c_2). 
	$$
\end{lem}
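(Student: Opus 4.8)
The plan is to exploit the reflection symmetry $x\mapsto -x$ of equation (\ref{eq:UthP}), which lets me reduce this statement to the already-proved Lemma \ref{lem:pre1} rather than repeating its argument. Given $U_\theta\in C^1(1-\delta,1)$ solving (\ref{eq:UthP}) with constants $c_1,c_2,c_3$, I would introduce $\tilde x:=-x$ and $\tilde U(\tilde x):=-U_\theta(-\tilde x)$, so that $\tilde U\in C^1(-1,-1+\delta)$. A direct substitution (using $\tilde U'(\tilde x)=U_\theta'(x)$, $\tilde x^2=x^2$, $\tilde U=-U_\theta$) shows that the left-hand side of (\ref{eq:UthP}) is invariant, since $2xU_\theta=2\tilde x\tilde U$ and the quadratic term is unchanged, while on the right $P_c(x)=c_1(1-x)+c_2(1+x)+c_3(1-x^2)$ becomes $c_1(1+\tilde x)+c_2(1-\tilde x)+c_3(1-\tilde x^2)$. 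Hence $\tilde U$ satisfies the \emph{same} equation with $c_1$ and $c_2$ interchanged.

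Applying Lemma \ref{lem:pre1} to $\tilde U$, whose leading constant is now $c_2$, immediately yields $c_2\geq -1$ and that $\tilde U(-1):=\lim_{\tilde x\to -1^+}\tilde U(\tilde x)$ exists, is finite, and equals $\tau_1(c_2)$ or $\tau_2(c_2)$. Translating back, $\tilde U(-1)=-\lim_{x\to 1^-}U_\theta(x)=-U_\theta(1)$, so $U_\theta(1)$ exists and is finite. Finally, since $-\tau_1(c_2)=-2+2\sqrt{1+c_2}=\tau_2'(c_2)$ and $-\tau_2(c_2)=-2-2\sqrt{1+c_2}=\tau_1'(c_2)$, I obtain $U_\theta(1)=\tau_1'(c_2)$ or $\tau_2'(c_2)$, as claimed.

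Alternatively, one can argue directly in full parallel with Lemma \ref{lem:pre1}: invoke the $x\to 1^-$ counterpart of Proposition 7.1 in \cite{LLY1} to get that $\lim_{x\to 1^-}U_\theta(x)$ is finite and $\lim_{x\to 1^-}(1-x)U_\theta'(x)=0$, then send $x\to 1$ in (\ref{eq:UthP}). Because $(1-x^2)U_\theta'=(1+x)(1-x)U_\theta'\to 0$ and $P_c(x)\to 2c_2$, the limit relation is $2U_\theta(1)+\tfrac12 U_\theta(1)^2=2c_2$, i.e. $c_2=\tfrac14[U_\theta(1)+2]^2-1\geq -1$, whose two roots are exactly $\tau_1'(c_2)$ and $\tau_2'(c_2)$. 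The only genuine point to verify — and the step I would treat most carefully — is the endpoint decay $\lim_{x\to 1^-}(1-x)U_\theta'(x)=0$, i.e. that the a priori analysis of Proposition 7.1 in \cite{LLY1} applies symmetrically at the north pole; once that is secured the remainder is an elementary limit and the quadratic formula. For this reason I would favor the reflection argument, which transports that regularity input for free.
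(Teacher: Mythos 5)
Your primary (reflection) argument is exactly the paper's proof: the paper verbatim sets $\tilde{U}_\theta(x):=-U_\theta(-x)$ and applies Lemma \ref{lem:pre1}, and your substitution details and sign checks (that $c_1$ and $c_2$ swap, and that $-\tau_1(c_2)=\tau_2'(c_2)$, $-\tau_2(c_2)=\tau_1'(c_2)$) are all correct. Your alternative direct argument is also sound, but the reflection route is what the paper does, and your write-up is a correct, fully detailed version of it.
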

\renewcommand{\thelem}{\thesection.\arabic{lem}}%
\begin{proof}
	Consider $\tilde{U}_\theta(x):=-U_\theta(-x)$, and apply Lemma \ref{lem:pre1} to $\tilde{U}_\theta$. 
\end{proof}

\bigskip

%
%

\bigskip

\begin{lem}\label{lem2_0_0}
  If $|c|\le A$ for some constant $A>0$, then there exists some constant $C$, depending only on $A$, such that all $C^1$ solutions $U_{\theta}$ of (\ref{eq:UthP}) in $(-1,1)$ satisfy
  \begin{equation*}
     |U_{\theta}(x)|\le C, \quad -1<x<1.
  \end{equation*}
\end{lem}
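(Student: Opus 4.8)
The plan is to establish the uniform bound by combining two ingredients: the boundary behavior already pinned down in Lemma~\ref{lem:pre1} and Lemma~\ref{lem:pre1'}, and an interior a~priori bound coming from the structure of the Riccati-type equation~(\ref{eq:UthP}). First I would observe that the right-hand side $P_c(x)=c_1(1-x)+c_2(1+x)+c_3(1-x^2)$ is a polynomial whose coefficients are controlled by $A$, so $|P_c(x)|\le C(A)$ uniformly on $[-1,1]$. By Lemma~\ref{lem:pre1} the limit $U_\theta(-1)$ exists and equals $\tau_1(c_1)$ or $\tau_2(c_1)=2\pm2\sqrt{1+c_1}$, and by Lemma~\ref{lem:pre1'} the limit $U_\theta(1)$ exists and equals $\tau_1'(c_2)$ or $\tau_2'(c_2)=-2\pm2\sqrt{1+c_2}$. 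Since $-1\le c_1,c_2\le A$, these four candidate endpoint values are all bounded by a constant depending only on $A$. Thus the solution is already uniformly bounded near both endpoints $x=\pm1$, and it extends continuously to $[-1,1]$; the only remaining task is to bound it uniformly in the interior.

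The main step is the interior bound. The natural approach is a maximum-principle / contradiction argument exploiting the sign structure of the Riccati nonlinearity. Rewrite~(\ref{eq:UthP}) as
\begin{equation*}
	(1-x^2)U_\theta' = P_c(x) - 2xU_\theta - \tfrac12 U_\theta^2.
\end{equation*}
Suppose $U_\theta$ attains a large positive interior maximum at some $x_0\in(-1,1)$; then $U_\theta'(x_0)=0$, so $\tfrac12 U_\theta(x_0)^2 + 2x_0 U_\theta(x_0) = P_c(x_0)$, which forces $U_\theta(x_0)^2 \le 4|x_0|\,|U_\theta(x_0)| + 2|P_c(x_0)| \le 4|U_\theta(x_0)| + 2C(A)$. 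This quadratic inequality immediately yields $|U_\theta(x_0)|\le C(A)$. The symmetric argument at an interior minimum gives the lower bound. Combining this with the endpoint values controlled above, any interior extremum and the two endpoints are all bounded by a constant depending only on $A$, hence so is $\sup_{(-1,1)}|U_\theta|$.

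The part requiring care is the interplay between interior extrema and the boundary: a priori the supremum of $|U_\theta|$ over $(-1,1)$ might be approached only in the limit $x\to\pm1$ rather than attained at an interior critical point, and one must ensure the continuous extension to $[-1,1]$ is legitimate before invoking a maximum. This is where Lemma~\ref{lem:pre1} and Lemma~\ref{lem:pre1'} are essential: they guarantee finite limits at $\pm1$, so $U_\theta$ extends to a continuous function on the compact interval $[-1,1]$ and its maximum and minimum are genuinely attained. If an extremum of the extended function occurs at an endpoint, it is bounded by the explicit $\tau$-values; if it occurs in the open interval, the critical-point computation above applies. This dichotomy closes the argument and produces the constant $C=C(A)$ uniform over all $C^1$ solutions with $|c|\le A$.
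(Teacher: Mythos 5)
Your proof is correct and follows essentially the same strategy as the paper's: control the endpoint values via Lemma~\ref{lem:pre1} and Lemma~\ref{lem:pre1'}, then exploit the coercive quadratic term in the Riccati equation~(\ref{eq:UthP}) at a point where the sign of $U_\theta'$ is known. The only (cosmetic) difference is that the paper evaluates the equation at an auxiliary point $\tilde x$ where $U_\theta(\tilde x)=\tfrac12 U_\theta(\bar x)$ and $U_\theta'(\tilde x)\ge 0$, whereas you evaluate it directly at the interior critical point; both yield the same quadratic inequality and the same bound $C(A)$.
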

\begin{proof}
   By Lemma \ref{lem:pre1}, there is some $C_1(A)>0$, such that $|U_{\theta}(\pm 1)|\le C_1(A)$ for all solutions $U_{\theta}$ of  (\ref{eq:UthP}) in $(-1,1)$. 
   
   If $\sup_{-1<x<1}|U_{\theta}(x)|\le 8C_1(A)$, the proof is finished. Otherwise, there exists some $\bar{x}\in (-1,1)$ such that $|U_{\theta}(\bar{x})|=\max_{-1\le x\le 1}|U_{\theta}(x)|>8C_1(A)$. We may assume that $U_{\theta}(\bar{x})>8C_1(A)$, since the other case can be handled similarly. Then there exists some $-1<\tilde{x}<\bar{x}$ such that $U_{\theta}(\tilde{x})=\frac{U_{\theta}(\bar{x})}{2}$ and $U_{\theta}'(\tilde{x})\ge 0$. By equation  (\ref{eq:UthP}), we have 
   \[
     -U_{\theta}(\bar{x})+\frac{1}{8}U_{\theta}^2(\bar{x})\le  2\tilde{x}U_{\theta}(\tilde{x})+\frac{1}{2}U_{\theta}^2(\tilde{x})\le P_c(\tilde{x})\le C_2(A).
   \]
 It follows that $U_{\theta}(\bar{x})\le C_3(A)$. The proof is finished.
\end{proof}

%
%

\begin{lem}\label{lem:c1c2:LocalSeriesSol}
	Let $c_1\geq -1$, $\tau = \tau_2(c_1)$ or $\tau = \tau_1(c_1) \not\in \{0, -2, -4, -6, \cdots \}$. Then for every $c_2,c_3\in \mathbb{R}$, there exist $\delta>0$ depending only on an upper bound of $\sum_{i=1}^{3}|c_i|$ and a positive lower bound of $\inf_{k\in \mathbb{N}}|\tau+2k|$,   and a sequence $\{a_n\}_{n=1}^\infty$  such that
	$$
		|a_n| \le \left(\frac{1}{2\delta}\right)^n,
	$$ 
	and 
	\[
		U_{\theta}(x) := \tau + \sum_{n=1}^{\infty} a_n (1+x)^n 
	\]
	is a real analytic solution of (\ref{eq:UthP}) in $(-1,-1+\delta)$.  Moreover, $U_{\theta}$ is the unique real analytic solution of (\ref{eq:UthP}) in $(-1,-1+\delta')$ satisfying $U_{\theta}(-1)=\tau$ for any $0<\delta'\le \delta$.
\end{lem}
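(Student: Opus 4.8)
The plan is to pass to the variable $t := 1+x$ (so $t\to 0^+$ as $x\to -1^+$) and seek a power series solution $U := U_\theta = \tau + \sum_{n\ge 1} a_n t^n$. In this variable $1-x^2 = t(2-t)$, $2x = 2t-2$, and $P_c = 2c_1 + (-c_1+c_2+2c_3)\,t - c_3 t^2$, so (\ref{eq:UthP}) becomes $t(2-t)U' + (2t-2)U + \tfrac12 U^2 = 2c_1 + (-c_1+c_2+2c_3)t - c_3 t^2$, where now $' = d/dx = d/dt$. Substituting the series and collecting powers of $t$, the $t^0$-equation reads $\tfrac12\tau^2 - 2\tau = 2c_1$, which is precisely the quadratic with roots $\tau_1(c_1),\tau_2(c_1)$; thus the constant term is consistent exactly when $a_0 = \tau \in \{\tau_1,\tau_2\}$. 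For $n\ge 1$ the $t^n$-equation takes the form $(\tau + 2(n-1))\,a_n = b_n + (n-3)a_{n-1} - \tfrac12\sum_{k=1}^{n-1} a_k a_{n-k}$, with $b_1 = -c_1+c_2+2c_3$, $b_2 = -c_3$, $b_n = 0$ for $n\ge 3$. Its leading coefficient is $\tau + 2(n-1) = \tau + 2k$ with $k = n-1 \ge 0$, which is nonzero for every $n\ge 1$ exactly under the hypothesis $\tau \notin \{0,-2,-4,\dots\}$ (automatic for $\tau=\tau_2\ge 2$); hence the recursion determines all $a_n$ uniquely from $\tau$ and $c$.

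The crux — and the step I expect to be the main obstacle — is the convergence estimate, because of the quadratic convolution. Writing $d_n := |\tau + 2(n-1)| \ge \mu := \inf_{k\ge 0}|\tau+2k| > 0$ and $|b_n|\le 2A$ (where $A$ bounds $\sum|c_i|$), the naive bound $|a_k|\le M^k$ gives $\sum_{k=1}^{n-1}|a_k||a_{n-k}| \le (n-1)M^n$; after dividing by $d_n \sim 2n$ this factor of $n$ is borderline and, near an index where $d_n$ is as small as $\mu$, it defeats a clean geometric estimate. The fix is to run the induction with the sharper majorant $|a_n|\le B R^n/n^2$: using the elementary inequality $\sum_{k=1}^{n-1}\tfrac{1}{k^2(n-k)^2}\le \tfrac{C_*}{n^2}$ with $C_*$ an absolute constant, the convolution is controlled by $C_* B^2 R^n/n^2$, so the sum now carries the $1/n^2$ decay needed to be absorbed against the linear growth of $d_n$. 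Closing the induction then requires $B$ small relative to $\mu$ (to dominate the nonlinear term, $B\lesssim \mu/C_*$) and $R$ large relative to $A$, $\mu$ and the location of the smallest denominator (to dominate $(n-3)a_{n-1}$, the inhomogeneity, and the base cases $n=1,2$). All of these are controlled by the upper bound for $\sum|c_i|$ and the positive lower bound for $\mu$, so $B,R$, and hence $\delta$, depend only on those two quantities. Finally $B R^n/n^2 \le M^n$ with $M := R\max(1,B)$, so setting $\delta := 1/(2M)$ gives both $|a_n|\le (1/(2\delta))^n$ and a radius of convergence at least $\delta$.

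Convergence of the series on $|t|<\delta$ makes $U$ real analytic on $(-1,-1+\delta)$; since its coefficients obey the recursion, term-by-term differentiation shows it solves the transformed equation, hence (\ref{eq:UthP}), on that interval, with $U(-1)=\tau$. This yields existence and the claimed bound.

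For uniqueness among real analytic solutions, I would let $W := V - U$ be the difference of two such solutions with common value $\tau$ at $-1$. Subtracting the equations gives the linear first order ODE $t(2-t)W' = \bigl(2 - 2t - \tfrac12(V+U)\bigr)W$, a regular singular equation at $t=0$ whose indicial exponent is $(2-\tau)/2$, equal to $-\sqrt{1+c_1}$ for $\tau=\tau_2$ and to $\sqrt{1+c_1}$ for $\tau=\tau_1$. For $\tau=\tau_2$ the nontrivial homogeneous solution behaves like $t^{-\sqrt{1+c_1}}$ and cannot stay bounded as $t\to 0^+$, forcing $W\equiv 0$; for $\tau=\tau_1$ it behaves like $t^{\sqrt{1+c_1}}$, which fails to be real analytic at $t=0$ precisely because the hypothesis $\tau_1\notin\{0,-2,-4,\dots\}$ makes $\sqrt{1+c_1}\notin\{1,2,3,\dots\}$ (the borderline case $c_1=-1$ producing a logarithm, again excluded). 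In either case the only real analytic $W$ vanishing at $t=0$ is $W\equiv 0$, so $V=U$; equivalently, any real analytic solution with limit $\tau$ has Taylor coefficients at $t=0$ satisfying the same uniquely solvable recursion. The identical argument on $(-1,-1+\delta')$ gives uniqueness there for every $0<\delta'\le\delta$.
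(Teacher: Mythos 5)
Your proof is correct and follows essentially the same route as the paper: the substitution $s=1+x$, the same coefficient recursion determining $\tau$ and the $a_n$, a majorant bound giving a uniform radius of convergence, and uniqueness from the unique solvability of the recursion. One correction worth noting: the plain geometric majorant $|a_n|\le a^n$ that you dismiss as failing does in fact close (and is what the paper uses) --- one handles the finitely many indices where $|\tau+2(n-1)|$ can be as small as $\mu$ by simply taking $a$ large enough to cover $n\le 100|\tau|+100$ directly, while for larger $n$ the denominator $|2n-2+\tau|\gtrsim \tfrac{3}{2}(n-1)$ grows linearly and absorbs the factor $n-1$ from the convolution with margin to spare --- so your $BR^n/n^2$ refinement is valid but unnecessary (and your side remark that the case $c_1=-1$ is ``excluded'' in the uniqueness step is inaccurate, though harmless since your coefficient-comparison argument covers it).
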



\noindent \emph{Proof of Lemma \ref{lem:c1c2:LocalSeriesSol}.}
	Let $s = 1+x$. Rewrite 
	\[
		P_c(x) = 2 c_1 + (-c_1 + c_2 + 2 c_3) (1+x) - c_3(1+x)^2  =: \tilde{c}_1 + \tilde{c}_2 s + \tilde{c}_3 s^2. 
	\]
	Suppose that $U_{\theta} = \tau + \sum_{n=1}^{\infty} a_n s^n$, then $U_{\theta}' = \sum_{n=1}^{\infty}n a_n s^{n-1}$. Plug them into (\ref{eq:UthP}), 
	\[
	\begin{split}
		& \quad \mbox{LHS} \\
		& = s(2-s)\sum_{n=1}^{\infty}n a_n s^{n-1}+2(s-1)(\tau+\sum_{n=1}^{\infty}a_ns^n)+\frac{1}{2}(\tau+\sum_{n=1}^{\infty}a_ns^n)^2\\
		&=\frac{1}{2}\tau^2-2\tau+((2+a_1)\tau)s+\sum_{n=2}^{\infty}[(2n-2+\tau)a_n+(3-n)a_{n-1}+\frac{1}{2}\sum_{k+l=n,k,l\ge 1}a_k a_l]s^n\\
		&= \tilde{c}_1 + \tilde{c}_2 s + \tilde{c}_3 s^2 =\mbox{RHS}.
	\end{split}
	\]
	Compare coefficients, 
	\begin{equation*}
	\begin{array}{lll}
		 n = 0, \quad  \frac{1}{2}\tau^2 - 2 \tau = \tilde{c}_1, \quad & \textrm{so } \tau = 2\pm \sqrt{4+2\tilde{c}_1} = \tau_1(c_1) \mbox{ or } \tau_2(c_1), \\
		 n = 1, \quad (a_1 + 2 ) \tau = \tilde{c}_2, \quad & \textrm{so } a_1=\frac{\tilde{c}_2}{\tau} - 2, \\
		 n = 2, \quad (2 + \tau) a_2 + a_1 + \frac{1}{2} a_1^2 = \tilde{c}_3, & \textrm{ so }a_2=\frac{1}{\tau+2}(\tilde{c}_3 - a_1 - \frac{1}{2} a^2_1).
	\end{array}
	\end{equation*}
	For $n\ge 3$, 
	\[
		(2 n - 2 + \tau) a_n + ( 3 - n ) a_{n-1} + \frac{1}{2} \sum_{k+l=n, k,l\ge 1}a_k a_l=0. 
	\]
	Since for any $n\ge 1$, $\tau\ne -2(n-1)$, 
	\begin{equation}\label{eq2_2_1}
		a_n = - \frac{1}{2n-2+\tau} \left( \frac{1}{2}\sum_{k+l=n,k,l\ge 1}a_k a_l+(3-n)a_{n-1} \right), 
	\end{equation}
	it can be seen that $a_n$ is determined by $a_1,...,a_{n-1}$, thus determined by $c_1,c_2,c_3$ and $\tau$.
	
	{\it Claim}: there exists some $a>0$ large, depending only on an upper bound of $\sum_{i=1}^{3}|c_i|$ and a positive lower bound of $\inf_{k\in \mathbb{N}}|\tau+2k|$, such that 
	\begin{equation*}
		|a_n|\le a^n.
	\end{equation*}
	{\it Proof of Claim}: Choose $a>1$ large such that for $1\le n\le 100|\tau|+100$, $|a_n|\le a^n$. 
	
	Now for $n>100|\tau|+100$, suppose that for $1\le k\le n-1$, $|a_k|\le a^k$, then by induction and the recurrence formula (\ref{eq2_2_1}), 
	\[
		|a_n| \le \frac{2}{3(n-1)}|\frac{1}{2}(n-1)a^n+(n-3)a^{n-1}| \le \left(\frac{1}{3}+\frac{2(n-3)}{3(n-1)a}\right)a^n \le a^n.
	\]
 The claim is proved.
	
	So for $\delta<\frac{1}{a}$, 
	$U_{\theta} = \tau + \sum_{n=1}^{\infty} a_n s^n$, with $s=1+x$, is a real analytic solution of (\ref{eq:UthP}) in $(-1,-1+\delta)$. The uniqueness of $U_{\theta}$ is clear from the proof above.
\qed


\addtocounter{lem}{-1}
\renewcommand{\thelem}{\thesection.\arabic{lem}'}%
\begin{lem}\label{lem:c1c2:LocalSeriesSol'}
	Let $c_2\geq -1$, $\tau' = \tau_1'(c_2)$ or $\tau' = \tau_2'(c_2) \not\in \{0, 2, 4, 6, \cdots \}$. Then for every $c_1,c_3\in \mathbb{R}$, there exist $\delta>0$, depending only on an upper bound of $\sum_{i=1}^{3}|c_i|$ and a positive lower bound of $\inf_{k\in \mathbb{N}}|\tau'-2k|$, and a sequence $\{a_n\}_{n=1}^\infty$ such that
	$$
		|a_n| \le \left(\frac{1}{2\delta}\right)^n, 
	$$ 
	and 
	\[
		U_{\theta}(x) := \tau' + \sum_{n=1}^{\infty} a_n (1-x)^n 
	\]
	is a real analytic solution of (\ref{eq:UthP}) in $(1-\delta,1)$.  Moreover, $U_\theta$ is the unique real analytic solution of (\ref{eq:UthP}) in $(1-\delta',1)$ satisfying $U_\theta(1)=\tau'$ for any $0<\delta'\le \delta$.
\end{lem}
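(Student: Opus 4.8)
The plan is to deduce Lemma~\ref{lem:c1c2:LocalSeriesSol'} from Lemma~\ref{lem:c1c2:LocalSeriesSol} by the reflection already used to pass from Lemma~\ref{lem:pre1} to Lemma~\ref{lem:pre1'}, rather than rerunning the power-series argument at $x=1$. Given a candidate solution on $(1-\delta,1)$, I would set $\tilde U_\theta(x):=-U_\theta(-x)$; the change $x\mapsto -x$ sends $(1-\delta,1)$ to $(-1,-1+\delta)$, so the local theory at the south pole applies to $\tilde U_\theta$.

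First I would verify that this reflection is an involution on the family (\ref{eq:UthP}) that interchanges $c_1$ and $c_2$. Using $\frac{d}{dx}\bigl[-U_\theta(-x)\bigr]=U_\theta'(-x)$ and substituting $y=-x$, a direct computation gives
\[
(1-x^2)\tilde U_\theta' + 2x\,\tilde U_\theta + \tfrac12\tilde U_\theta^2 = \bigl[(1-y^2)U_\theta' + 2y\,U_\theta + \tfrac12 U_\theta^2\bigr]\Big|_{y=-x},
\]
and the right-hand side equals $P_c(-x)=c_1(1+x)+c_2(1-x)+c_3(1-x^2)=P_{\tilde c}(x)$ with $\tilde c=(c_2,c_1,c_3)$. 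Thus $\tilde U_\theta$ solves (\ref{eq:UthP}) with the first two constants swapped, and I may apply Lemma~\ref{lem:c1c2:LocalSeriesSol} with $c_1$ replaced by $c_2$.

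Next I would match the leading terms and the excluded sets. Because $\tau_1'(c_2)=-\tau_2(c_2)$ and $\tau_2'(c_2)=-\tau_1(c_2)$, a south-pole solution $U_\theta=\tau+\sum_{n\ge1}a_n(1+x)^n$ with $\tau=\tau_1(c_2)$ or $\tau_2(c_2)$ is carried to
\[
\tilde U_\theta(x)=-\tau+\sum_{n\ge1}(-a_n)(1-x)^n,
\]
an expansion of exactly the required shape with leading value $\tau'=-\tau\in\{\tau_2'(c_2),\tau_1'(c_2)\}$ and coefficients $b_n=-a_n$, so $|b_n|=|a_n|\le(1/2\delta)^n$ transfers verbatim. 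The resonance denominator $2n-2+\tau$ in (\ref{eq2_2_1}) becomes $2n-2-\tau'$, whence the hypotheses $\inf_k|\tau+2k|>0$ and $\tau_1(c_2)\notin\{0,-2,-4,\dots\}$ of Lemma~\ref{lem:c1c2:LocalSeriesSol} become $\inf_k|\tau'-2k|>0$ and $\tau_2'(c_2)\notin\{0,2,4,\dots\}$, which are exactly the hypotheses assumed here; the admissible $\delta$ is unchanged because $\sum_i|c_i|$ and $\inf_k|\tau+2k|=\inf_k|\tau'-2k|$ are both invariant under the swap.

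I do not anticipate a genuine obstacle here: the argument is entirely the bookkeeping of signs and of the correspondence between the excluded sets under $c_1\leftrightarrow c_2$. The only point I would state carefully is uniqueness: since $U_\theta\mapsto -U_\theta(-\cdot)$ is an involutive bijection between real analytic solutions on $(-1,-1+\delta')$ with value $\tau$ at $-1$ and those on $(1-\delta',1)$ with value $-\tau=\tau'$ at $1$, the uniqueness clause of Lemma~\ref{lem:c1c2:LocalSeriesSol} transports directly to the desired uniqueness in $(1-\delta',1)$.
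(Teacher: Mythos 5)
Your proposal is correct and is precisely the argument the paper intends: the primed lemma is stated without proof because it follows from Lemma \ref{lem:c1c2:LocalSeriesSol} via the reflection $U_\theta(x)\mapsto -U_\theta(-x)$, exactly as the paper does explicitly for Lemma \ref{lem:pre1'}. Your verification that the reflection swaps $c_1\leftrightarrow c_2$, sends $\tau_1'(c_2)=-\tau_2(c_2)$, $\tau_2'(c_2)=-\tau_1(c_2)$, preserves $\inf_k|\tau+2k|=\inf_k|\tau'-2k|$, and transports both the coefficient bounds and the uniqueness clause is complete and accurate.
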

\renewcommand{\thelem}{\thesection.\arabic{lem}}%

The following two lemmas give some local comparison results.

\begin{lem}\label{lem:c1c2:compare}
	Suppose $0<\delta<2$, $U_{\theta},\tilde{U}_\theta \in C^1(-1,-1+\delta]\cap C^0[-1,-1+\delta]$ satisfy 
	$$
		(1-x^2)U_{\theta}'+ 2x U_{\theta} + \frac{1}{2}U_{\theta}^2 \geq (1-x^2)\tilde{U}_\theta'+ 2x \tilde{U}_\theta + \frac{1}{2}\tilde{U}_\theta^2,  \quad -1<x<-1+\delta. 
	$$
	Suppose also that one of the following two conditions holds. 
	
	(i) $U_{\theta}(-1) \ge  \tilde{U}_\theta(-1)>2$.

	(ii) $U_{\theta}(-1) = \tilde{U}_\theta(-1)=2$, and 
	\begin{equation}\label{eq:c1c2:f1}
		\limsup_{x\to-1^+} \int_{-1+\delta}^x \frac{-2+U_{\theta}(s)}{1-s^2} ds < +\infty. 
	\end{equation}	
	Then either 
	$$
		U_{\theta} > \tilde{U}_\theta, \quad \mbox{in } (-1, -1+\delta), 
	$$
	or there exists $\delta'\in (0,\delta)$ such that
	$$
		U_{\theta} \equiv \tilde{U}_\theta, \quad \mbox{in } (-1,-1+\delta'). 
	$$
\end{lem}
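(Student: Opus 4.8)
The plan is to reduce the comparison to a linear first-order differential inequality for the difference $w := U_\theta - \tilde U_\theta$ and then analyze its sign near the singular endpoint $x=-1$ by means of an integrating factor. Subtracting the two differential inequalities and using $\tilde U_\theta = U_\theta - w$ to rewrite $\tfrac12(U_\theta^2-\tilde U_\theta^2) = U_\theta w - \tfrac12 w^2$, I obtain
\[
 (1-x^2)\,w' + (2x + U_\theta)\,w \ \ge\ \tfrac12 w^2 \ \ge\ 0, \qquad -1<x<-1+\delta.
\]
The decisive feature is that, after discarding the nonnegative term $\tfrac12 w^2$, the coefficient multiplying $w$ involves only $U_\theta$, which is precisely the function for which hypothesis (\ref{eq:c1c2:f1}) supplies control; no analogous bound is available for $\tilde U_\theta$, so the choice to expand about $U_\theta$ rather than about $\tfrac12(U_\theta+\tilde U_\theta)$ is the key algebraic simplification. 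Dividing by $1-x^2>0$ and introducing $\mu(x):=\exp\big(\int_{-1+\delta}^{x}\frac{2s+U_\theta(s)}{1-s^2}\,ds\big)$, which is positive and $C^1$ on $(-1,-1+\delta)$, the inequality becomes $(\mu w)'\ge 0$, so $v:=\mu w$ is non-decreasing.

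Next I would show that $v(x)\to 0$ as $x\to -1^+$, which together with monotonicity forces $v\ge 0$, hence $w\ge 0$, on the whole interval. The two hypotheses enter through the boundary behavior of $\mu$. Writing $\frac{2s+U_\theta(s)}{1-s^2}=\frac{2}{1-s}+\frac{U_\theta(s)-2}{1-s^2}$, the first term integrates to a quantity bounded near $-1$. Under (i), since $U_\theta(-1)>2$ the second integrand behaves like $\frac{U_\theta(-1)-2}{2(1+s)}$ with positive numerator, so $\int_{-1+\delta}^{x}\to-\infty$ and $\mu\to 0$; as $w$ is continuous on $[-1,-1+\delta]$ and hence bounded, $v=\mu w\to 0$. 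Under (ii), hypothesis (\ref{eq:c1c2:f1}) states exactly that $\limsup_{x\to-1^+}\int_{-1+\delta}^{x}\frac{U_\theta(s)-2}{1-s^2}\,ds<+\infty$, so $\mu$ is bounded above in a neighborhood of $-1$; since $w(-1)=U_\theta(-1)-\tilde U_\theta(-1)=0$ and $w$ is continuous, $v=\mu w\to 0$ again. In both regimes $v$ is non-decreasing with left-endpoint limit $0$, whence $v\ge 0$ and therefore $w\ge 0$ throughout $(-1,-1+\delta)$.

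Finally I would extract the dichotomy from the monotonicity of $v$. If $w\equiv 0$ on some $(-1,-1+\delta')$ we are in the second alternative. Otherwise, since $w\ge 0$, for every $\delta'\in(0,\delta)$ there is a point of $(-1,-1+\delta')$ at which $v>0$; choosing such points $x_n\to -1^+$, the monotonicity of $v$ yields $v(x)\ge v(x_n)>0$ for every $x>x_n$, so $w>0$ on all of $(-1,-1+\delta)$, which is the first alternative.

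I expect the main obstacle to be the boundary analysis at the singular point $x=-1$: the coefficient $\frac{2x+U_\theta}{1-x^2}$ is non-integrable there, so proving $\mu w\to 0$ is delicate, and it is exactly at this step that the two regimes (i) and (ii) must be handled separately and that hypothesis (\ref{eq:c1c2:f1}) becomes indispensable. Once the correct identity isolating $U_\theta$ in the linear coefficient is in hand, the remaining monotonicity and continuation arguments are routine.
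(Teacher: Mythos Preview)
Your proof is correct and follows essentially the same route as the paper: set $g=U_\theta-\tilde U_\theta$, derive $(1-x^2)g'+(2x+U_\theta)g\ge \tfrac12 g^2\ge 0$, introduce the integrating factor $\mu(x)=\exp\int_{-1+\delta}^x\frac{2s+U_\theta}{1-s^2}\,ds$, and use that $\mu g$ is nondecreasing together with the endpoint behavior to obtain the dichotomy. Your decomposition $\frac{2s+U_\theta}{1-s^2}=\frac{2}{1-s}+\frac{U_\theta-2}{1-s^2}$ makes the role of hypothesis~\eqref{eq:c1c2:f1} explicit, and your claim that $\mu g\to 0$ at $-1$ (rather than merely $\liminf\ge 0$) is in fact valid in both cases and slightly sharper than what the paper states.
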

\begin{proof}
	Let $g = U_{\theta} - \tilde{U}_\theta$, then $g(-1) \ge 0$ and $g$ satisfies 
	\begin{equation}\label{eq:c1c2:compare:g}
		g' + b(x) g  \geq \frac{1}{2(1-x^2)} g^2 \geq 0, \quad \mbox{for all } x\in(-1,-1+\delta), 
	\end{equation}
	where $b(x)$ is given by 
	\begin{equation}\label{eq:c1c2:compare:b}
		b(x) = (1-x^2)^{-1}(2x+U_{\theta}).
	\end{equation}
	Let
	\[
	    w(x) = e^{\int_{-1+\delta}^x b(s) ds} g(x).
	\]
	Then $w$ satisfies, using  (\ref{eq:c1c2:compare:g}), that
	\begin{equation}\label{eq:c1c2:compare:1}
		w'(x) \geq 0 \mbox{ in }(-1,-1+\delta).  
	\end{equation} 
	
	Under condition either (i) or (ii), we have
	\[
	   \limsup_{x\to -1^+} \int_{-1+\delta}^x b(s) ds < +\infty. 
	\]
	Using this and the fact that $g(-1)\ge 0$, we have $\liminf_{x\to-1^+}w(x)\ge 0$. Therefore, using (\ref{eq:c1c2:compare:1}), we have either $w>0$ in $(-1,-1+\delta)$ or there exists a constant $\delta'$, $0<\delta'<\delta$ such that $w\equiv 0$ in $(-1,-1+\delta')$. The lemma is proved. 
\end{proof}

\begin{cor}\label{lem:c1c2:LocalUniqueness}
	For $c_1>-1$, $c_2,c_3\in \mathbb{R}$ and $0<\delta<2$, there exists at most one solution $U_{\theta}$ of (\ref{eq:UthP}) in $C^1(-1,-1+\delta)$ satisfying 
	\[
		\lim_{x\rightarrow -1^+} U_{\theta}(x) = \tau_2(c_1). 
	\]
\end{cor}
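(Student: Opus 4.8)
The plan is to prove uniqueness by reducing to the comparison result of Lemma~\ref{lem:c1c2:compare}. Suppose $U_\theta$ and $\tilde U_\theta$ are two solutions of (\ref{eq:UthP}) in $C^1(-1,-1+\delta)$, both satisfying $\lim_{x\to-1^+}U_\theta(x)=\lim_{x\to-1^+}\tilde U_\theta(x)=\tau_2(c_1)$. Since $c_1>-1$, we have $\tau_2(c_1)=2+2\sqrt{1+c_1}>2$, so we are in the regime covered by hypothesis (i) of the lemma, with the strict inequality $\tau_2(c_1)>2$ giving us room to work. First I would verify that the boundary values and the limit behavior needed by Lemma~\ref{lem:c1c2:compare} indeed hold: by Lemma~\ref{lem:pre1} each solution extends continuously to $x=-1$ with the stated finite limit, so both functions lie in $C^1(-1,-1+\delta']\cap C^0[-1,-1+\delta']$ for any $\delta'<\delta$, and the equality of their boundary values puts us precisely in case (i) with $U_\theta(-1)=\tilde U_\theta(-1)=\tau_2(c_1)>2$.

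The key idea is to apply the lemma \emph{in both directions}. Since both $U_\theta$ and $\tilde U_\theta$ satisfy (\ref{eq:UthP}) with equality, each satisfies the differential inequality of Lemma~\ref{lem:c1c2:compare} relative to the other (the inequality holds with equality in each direction). Applying the lemma once with the roles $(U_\theta,\tilde U_\theta)$ gives that either $U_\theta>\tilde U_\theta$ on all of $(-1,-1+\delta)$ or $U_\theta\equiv\tilde U_\theta$ on some $(-1,-1+\delta')$. Applying it again with the roles reversed gives the same dichotomy for $\tilde U_\theta$ versus $U_\theta$. The strict-inequality alternatives are mutually incompatible: we cannot simultaneously have $U_\theta>\tilde U_\theta$ everywhere and $\tilde U_\theta>U_\theta$ everywhere. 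Hence at least one of the two applications must land in the coincidence alternative, yielding $U_\theta\equiv\tilde U_\theta$ on some interval $(-1,-1+\delta')$.

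Finally I would upgrade this local coincidence to coincidence on all of $(-1,-1+\delta)$. Once $U_\theta\equiv\tilde U_\theta$ on $(-1,-1+\delta')$, the two solutions agree at an interior point together with their derivatives, and standard ODE uniqueness for the first-order equation (\ref{eq:UthP})---whose right-hand side, after dividing by the nonvanishing factor $1-x^2$ on the open interval, is locally Lipschitz in $U_\theta$---forces $U_\theta\equiv\tilde U_\theta$ throughout $(-1,-1+\delta)$. The main obstacle I anticipate is purely bookkeeping: ensuring that the integrability hypothesis governing the sign of $\liminf_{x\to-1^+}w(x)$ is automatically satisfied in case (i), and confirming that the limsup of $\int_{-1+\delta}^x b(s)\,ds$ is finite. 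This is where the condition $\tau_2(c_1)>2$ (equivalently $c_1>-1$) is essential, since for the borderline value the weaker case (ii) with its extra integral condition would be needed, which is exactly why this corollary is stated only for $c_1>-1$. Modulo that check, the argument is a clean two-sided application of the comparison lemma followed by classical ODE continuation.
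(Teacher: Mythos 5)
Your proof is correct and is exactly the argument the paper compresses into one sentence (``Since $\tau_2(c_1)>2$ for $c_1>-1$, the uniqueness follows from (i) of Lemma \ref{lem:c1c2:compare}''): a two-sided application of the comparison lemma under hypothesis (i) to rule out the strict alternatives, followed by standard ODE uniqueness to propagate the local coincidence. The finiteness check you flag at the end is indeed automatic in case (i), since $2x+U_\theta\to\tau_2(c_1)-2>0$ forces $\int_{-1+\delta}^x b(s)\,ds\to-\infty$ as $x\to-1^+$.
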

\begin{proof}
Since $\tau_2(c_1)>2$ for $c_1>-1$, the uniqueness follows from (i) of Lemma \ref{lem:c1c2:compare}.
\end{proof}
	

	

Similarly, we have
\addtocounter{lem}{-1}
\renewcommand{\thelem}{\thesection.\arabic{lem}'}%
\begin{lem}\label{lem:c1c2:compare'}
	Suppose $0<\delta<2$, $U_{\theta},\tilde{U}_\theta \in C^1[1-\delta,1)\cap C^0[1-\delta,1]$ satisfy 
	$$
		(1-x^2)U_{\theta}'+ 2x U_{\theta} + \frac{1}{2}U_{\theta}^2 \geq (1-x^2)\tilde{U}_\theta'+ 2x \tilde{U}_\theta + \frac{1}{2}\tilde{U}_\theta^2,  \quad 1-\delta<x<1. 
	$$
	Suppose also that one of the following two conditions holds. 
	
	(i) $U_{\theta}(1) \le \tilde{U}_\theta(1)<-2$,

	(ii) $U_{\theta}(1) = \tilde{U}_\theta(1)=-2$, and 
	\[
		\limsup_{x\to 1^-} \int_{1-\delta}^x \frac{2+U_{\theta}(s)}{1-s^2} ds < +\infty. 
	\]	
	Then either 
	$$
		U_{\theta} < \tilde{U}_\theta, \quad \mbox{in } (1-\delta,1), 
	$$
	or there exists $\delta'\in (0,\delta)$ such that
	$$
		U_{\theta} \equiv \tilde{U}_\theta, \quad \mbox{in } (1-\delta',1). 
	$$
\end{lem}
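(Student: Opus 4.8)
The plan is to reduce Lemma~\ref{lem:c1c2:compare'} to Lemma~\ref{lem:c1c2:compare} by the same reflection substitution used to deduce Lemma~\ref{lem:pre1'} from Lemma~\ref{lem:pre1}. Given $U_\theta,\tilde U_\theta$ on $(1-\delta,1)$ as in the hypotheses, I would set $\hat U_\theta(x):=-U_\theta(-x)$ and $\hat{\tilde U}_\theta(x):=-\tilde U_\theta(-x)$ for $x\in(-1,-1+\delta)$. Since $x\mapsto -x$ maps $[1-\delta,1)$ onto $(-1,-1+\delta]$ and $[1-\delta,1]$ onto $[-1,-1+\delta]$, the hatted functions lie in $C^1(-1,-1+\delta]\cap C^0[-1,-1+\delta]$, so they satisfy the regularity required by Lemma~\ref{lem:c1c2:compare}.

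The key point is that the nonlinear operator $V\mapsto (1-x^2)V'+2xV+\tfrac12 V^2$ is equivariant under this reflection. Using $\hat V'(x)=V'(-x)$, a direct computation gives
\[
	(1-x^2)\hat V'(x)+2x\hat V(x)+\tfrac12\hat V(x)^2
	=(1-y^2)V'(y)+2yV(y)+\tfrac12 V(y)^2\Big|_{y=-x},
\]
that is, the operator evaluated at $\hat V$ and $x$ equals the operator evaluated at $V$ and $-x$. Consequently the differential inequality for $(U_\theta,\tilde U_\theta)$ on $(1-\delta,1)$ transfers verbatim to the differential inequality for $(\hat U_\theta,\hat{\tilde U}_\theta)$ on $(-1,-1+\delta)$.

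It then remains to match the boundary data. Hypothesis (i), $U_\theta(1)\le\tilde U_\theta(1)<-2$, becomes $\hat U_\theta(-1)=-U_\theta(1)\ge-\tilde U_\theta(1)=\hat{\tilde U}_\theta(-1)>2$, which is precisely hypothesis (i) of Lemma~\ref{lem:c1c2:compare}. Under hypothesis (ii) both endpoint values become $2$, and the integral condition transforms by the change of variables $t=-s$ into
\[
	\int_{-1+\delta}^{x}\frac{-2+\hat U_\theta(s)}{1-s^2}\,ds
	=\int_{1-\delta}^{-x}\frac{2+U_\theta(t)}{1-t^2}\,dt,
\]
so its $\limsup$ as $x\to-1^+$ equals the $\limsup$ as $-x\to1^-$ of the integral appearing in hypothesis (ii) of Lemma~\ref{lem:c1c2:compare'}, which is finite. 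Thus Lemma~\ref{lem:c1c2:compare} applies to $(\hat U_\theta,\hat{\tilde U}_\theta)$, yielding either $\hat U_\theta>\hat{\tilde U}_\theta$ on $(-1,-1+\delta)$ or $\hat U_\theta\equiv\hat{\tilde U}_\theta$ on some $(-1,-1+\delta')$. Undoing the reflection turns the first alternative into $U_\theta<\tilde U_\theta$ on $(1-\delta,1)$ and the second into $U_\theta\equiv\tilde U_\theta$ on $(1-\delta',1)$, which is the asserted dichotomy. The only delicate steps—and the likely source of any error—are these two bookkeeping checks: that the reflection reverses both strict inequalities in (i) and flips the sign of the shift in the integrand of (ii) in exactly the way that turns the hypotheses of the primed lemma into those of the unprimed one.
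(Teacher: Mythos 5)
Your proposal is correct and matches the paper's intent: the paper states this primed lemma with only the remark ``Similarly, we have,'' and the reflection $\hat U_\theta(x)=-U_\theta(-x)$ is exactly the device the paper itself uses to pass from Lemma~\ref{lem:pre1} to Lemma~\ref{lem:pre1'}. Your bookkeeping checks (equivariance of the operator, reversal of the inequalities in (i), and the change of variables in the integral condition of (ii)) are all accurate, so the reduction to Lemma~\ref{lem:c1c2:compare} goes through as claimed.
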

\renewcommand{\thelem}{\thesection.\arabic{lem}}%

\addtocounter{cor}{-1}
\renewcommand{\thecor}{\thesection.\arabic{cor}'}%
\begin{cor}\label{lem:c1c2:LocalUniqueness'}
	For $c_2>-1$, $c_1,c_3\in \mathbb{R}$ and $0<\delta<2$, there exists at most one solution $U_{\theta}$ of (\ref{eq:UthP}) in $C^1(1-\delta,1)$ satisfying 
	\[
		\lim_{x\rightarrow 1^-} U_{\theta}(x) = \tau_1'(c_2). 
	\]
\end{cor}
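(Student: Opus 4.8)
The plan is to mirror the one-line proof of Corollary~\ref{lem:c1c2:LocalUniqueness}, now invoking the reflected comparison result Lemma~\ref{lem:c1c2:compare'} in place of Lemma~\ref{lem:c1c2:compare}. The key observation is that the hypothesis $c_2>-1$ is exactly what forces the prescribed boundary value strictly below $-2$: from (\ref{eq:tau12'}), $\tau_1'(c_2)=-2-2\sqrt{1+c_2}<-2$ precisely when $c_2>-1$. This is the only role of the restriction $c_2>-1$, and it is what lets us enter case (i), rather than the more delicate case (ii), of Lemma~\ref{lem:c1c2:compare'}.

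To carry this out, suppose $U_\theta$ and $\tilde U_\theta$ are two solutions of (\ref{eq:UthP}) in $C^1(1-\delta,1)$ with $\lim_{x\to 1^-}U_\theta(x)=\lim_{x\to 1^-}\tilde U_\theta(x)=\tau_1'(c_2)$. By Lemma~\ref{lem:pre1'} both extend continuously to $x=1$ with $U_\theta(1)=\tilde U_\theta(1)=\tau_1'(c_2)<-2$; after shrinking $\delta$ we may take them in $C^1[1-\delta,1)\cap C^0[1-\delta,1]$. Since each solves the equation with equality, the differential inequality of Lemma~\ref{lem:c1c2:compare'} holds for the ordered pair in both directions. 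Case (i) applied to $(U_\theta,\tilde U_\theta)$ gives either $U_\theta<\tilde U_\theta$ on $(1-\delta,1)$ or $U_\theta\equiv\tilde U_\theta$ on some $(1-\delta',1)$, and applied to $(\tilde U_\theta,U_\theta)$ it gives the symmetric dichotomy. The two strict inequalities cannot hold at once, so $U_\theta\equiv\tilde U_\theta$ on $(1-\delta',1)$. As (\ref{eq:UthP}) is a regular first-order ODE on $(-1,1)$ (the coefficient $1-x^2$ does not vanish there, and the right-hand side is smooth in $x$ and locally Lipschitz in $U_\theta$), coincidence on a subinterval propagates by standard ODE uniqueness to all of $(1-\delta,1)$.

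An equally short alternative, consistent with the way Lemma~\ref{lem:pre1'} was obtained from Lemma~\ref{lem:pre1}, is to reduce directly to Corollary~\ref{lem:c1c2:LocalUniqueness} via the reflection $\tilde U_\theta(x):=-U_\theta(-x)$. A direct substitution shows that if $U_\theta$ solves (\ref{eq:UthP}) with $(c_1,c_2,c_3)$ then $\tilde U_\theta$ solves it with $(c_2,c_1,c_3)$, while $\lim_{x\to 1^-}U_\theta(x)=\tau_1'(c_2)$ becomes $\lim_{x\to -1^+}\tilde U_\theta(x)=-\tau_1'(c_2)=2+2\sqrt{1+c_2}=\tau_2(c_2)$. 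Since the reflected first constant equals $c_2>-1$, Corollary~\ref{lem:c1c2:LocalUniqueness} gives at most one such $\tilde U_\theta$, hence at most one $U_\theta$.

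There is no genuine obstacle here: the statement is a direct corollary, exactly parallel to Corollary~\ref{lem:c1c2:LocalUniqueness}. The only points needing care are bookkeeping ones—checking that $c_2>-1$ yields $\tau_1'(c_2)<-2$ so that the clean case (i) applies, and noting that the two symmetric applications of the comparison lemma exclude both strict inequalities and leave only coincidence.
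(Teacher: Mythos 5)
Your proposal is correct and matches the paper's approach: the paper proves the unprimed Corollary in one line by noting $\tau_2(c_1)>2$ and invoking case (i) of the comparison lemma, and obtains the primed version by the same argument (equivalently, by the reflection $x\mapsto -x$, $U_\theta\mapsto -U_\theta$ that the paper uses throughout for the primed statements). Both of your routes are exactly these two equivalent readings of the paper's proof.
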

\renewcommand{\thecor}{\thesection.\arabic{cor}}

\bigskip

Now we are ready to analyze the global behavior of axisymmetric, no-swirl solutions of NSE (\ref{eq:UthP}) in $(-1,1)$. The behavior of solutions depends closely on parameters $c_1,c_2,c_3\in\mathbb{R}$. 

\bigskip

Recall the definition of $\bar{c}_3(c_1,c_2)$ given by (\ref{sec2:eq:bar:c3}), we have 

\begin{lem}\label{lem:c1c2:eqc3bar}
	Suppose $c_1\geq -1$, $c_2\geq -1$, $c_3 = \bar{c}_3(c_1,c_2)$, then $U_{\theta}^*(c_1,c_2)$ given by (\ref{eq1_1_1}) is the unique $C^1$ solution  of (\ref{eq:UthP}) in $(-1,1)$. 
	In particular, 
	\[
		U_{\theta}^*(c_1,c_2)(-1) = \tau_2(c_1), \quad U_{\theta}^*(c_1,c_2)(1) = \tau_1'(c_2). 
	\]
\end{lem}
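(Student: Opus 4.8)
The plan has two parts: verifying that $U_\theta^*$ solves (\ref{eq:UthP}) with the asserted endpoint values, and proving it is the \emph{only} $C^1$ solution. For the first part, set $a:=\sqrt{1+c_1}$ and $b:=\sqrt{1+c_2}$ (both $\ge 0$ since $c_1,c_2\ge -1$), so that (\ref{eq1_1_1}) becomes the affine function $U_\theta^*(x)=(a-b)-(2+a+b)x$ and (\ref{sec2:eq:bar:c3}) reads $\bar c_3=-\tfrac12(a+b)(a+b+2)$. I would substitute this affine $U_\theta^*$ into the left side of (\ref{eq:UthP}); since $U_\theta^*$ is linear, the left side is a quadratic polynomial in $x$, and I would match its coefficients of $1,x,x^2$ against those of $P_c$. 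Using $c_1=a^2-1$, $c_2=b^2-1$ and $c_3=\bar c_3$, the three identities check out: the $x^2$- and $x$-coefficients reproduce $-c_3$ and $c_2-c_1$, and the constant term reduces on both sides to $\tfrac12(a-b)^2-2-a-b$ after inserting the formula for $\bar c_3$. The endpoint values then follow by evaluating the affine formula: $U_\theta^*(-1)=2+2a=\tau_2(c_1)$ and $U_\theta^*(1)=-2-2b=\tau_1'(c_2)$, matching (\ref{eq:tau12}) and (\ref{eq:tau12'}).

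For uniqueness, let $U_\theta$ be any $C^1$ solution on $(-1,1)$ and set $v:=U_\theta-U_\theta^*$. Subtracting the two copies of (\ref{eq:UthP}) and factoring $U_\theta^2-(U_\theta^*)^2=v(v+2U_\theta^*)$ gives the homogeneous equation
\[
	(1-x^2)v' + (2x+U_\theta^*)v + \tfrac12 v^2 = 0, \quad -1<x<1.
\]
This has the form $v'=F(x,v)$ with $F$ locally Lipschitz in $v$ on compact subintervals of $(-1,1)$ and $F(x,0)=0$. Hence any interior zero of $v$ forces $v\equiv 0$: at a zero $x_0$ the equation also gives $v'(x_0)=0$, and $v\equiv 0$ is the unique solution through $(x_0,0)$, so the zero set is open, closed, and thus all of $(-1,1)$. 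Therefore either $v\equiv 0$ (and we are done), or $v$ has one fixed sign throughout $(-1,1)$.

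Assume $v\not\equiv 0$. On $(-1,1)$ put $w:=1/v$; the equation linearizes to $(1-x^2)w'-(2x+U_\theta^*)w=\tfrac12$. Computing $2x+U_\theta^*=(a-b)-(a+b)x$ and doing partial fractions, the integrating factor is $\mu(x)=(1-x)^{-b}(1+x)^{-a}$, and $(\mu w)'=\tfrac12(1-x)^{-b-1}(1+x)^{-a-1}>0$. So $\mu w$ is strictly increasing, while $\mu>0$ keeps $\operatorname{sign}(\mu w)=\operatorname{sign}(v)$ constant. If $v>0$, integrating toward $x=-1$ gives $\mu(x)w(x)=\mu(x_0)w(x_0)-\tfrac12\int_x^{x_0}(1-s)^{-b-1}(1+s)^{-a-1}\,ds$, and since $a\ge 0$ the exponent $-a-1\le -1$ makes $\int_{-1}(1+s)^{-a-1}\,ds$ diverge, forcing $\mu w\to-\infty$, which contradicts $\mu w>0$. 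If $v<0$, integrating toward $x=1$ and using $b\ge 0$ (so $\int^{1}(1-s)^{-b-1}\,ds$ diverges) gives $\mu w\to+\infty$, contradicting $\mu w<0$. Either way we reach a contradiction, so $v\equiv 0$ and $U_\theta=U_\theta^*$.

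The substitution and coefficient matching in the first part are routine; the content is the uniqueness, and the key mechanism is that the integrating factor's singular exponents $-a-1$ and $-b-1$ are $\le -1$ \emph{exactly} because $a,b\ge 0$, which is what forces the improper integrals to diverge and produces a sign contradiction at each pole. I expect the main obstacle to be stating the sign-trichotomy for $v$ cleanly and being careful that $w=1/v$ and the integrating-factor identity are used only on the open interval where $v\ne 0$, with the endpoint behavior extracted as a limit. As an alternative, the degenerate cases $c_1=-1$ or $c_2=-1$ (where $\tau_1=\tau_2=2$, resp.\ $\tau_1'=\tau_2'=-2$) can be handled by the comparison results Lemma~\ref{lem:c1c2:compare}/\ref{lem:c1c2:compare'} via their condition~(ii), and the nondegenerate case by Corollary~\ref{lem:c1c2:LocalUniqueness}/\ref{lem:c1c2:LocalUniqueness'}; but the $w=1/v$ computation dispatches all four cases uniformly.
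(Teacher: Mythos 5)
Your proof is correct, and it takes a genuinely different route from the paper's. The paper first invokes the endpoint classification (Lemma~\ref{lem:pre1} and its primed version) to extend any solution $U_\theta$ continuously to $[-1,1]$ with $U_\theta(\pm 1)$ among the admissible $\tau$-values, then uses the local uniqueness result (Corollary~\ref{lem:c1c2:LocalUniqueness}) together with part (ii) of the comparison Lemma~\ref{lem:c1c2:compare} (and its mirror at $x=1$) to show that any solution distinct from $U_\theta^*$ must lie strictly below $U_\theta^*$ near $x=-1$ and strictly above it near $x=+1$; an intermediate crossing point then contradicts ODE uniqueness. Your argument bypasses all of that machinery: after the sign trichotomy for $v=U_\theta-U_\theta^*$, the Bernoulli substitution $w=1/v$ linearizes the difference equation, and because $U_\theta^*$ is affine the integrating factor $\mu=(1-x)^{-b}(1+x)^{-a}$ is explicitly computable, with the monotonicity of $\mu w$ and the divergence of $\int(1+s)^{-a-1}ds$ and $\int(1-s)^{-b-1}ds$ (precisely because $a,b\ge 0$) producing a sign contradiction at one endpoint or the other. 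This is self-contained --- it does not even require knowing that $U_\theta$ extends continuously to $\pm 1$ --- and it isolates cleanly why the threshold value $c_3=\bar c_3$ is special (it is exactly where an affine solution exists, making the integrating factor explicit). The trade-off is that the paper's route reuses lemmas needed elsewhere (for Lemmas~\ref{lem:c1c2:<c3bar} and \ref{lem:c1c2:>c3bar}), whereas yours is ad hoc to this lemma; both are complete proofs.
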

\begin{proof}
	A direct calculation shows that $U_{\theta}^*:=U_{\theta}^*(c_1,c_2)$ is a $C^1$ solution of (\ref{eq:UthP}) in $(-1,1)$. It remains to prove the uniqueness. 
		
	Let $U_{\theta}$ be a $C^1$ solution of (\ref{eq:UthP}) in $(-1,1)$, $U_{\theta}\not\equiv U_{\theta}^*$. By Lemma \ref{lem:pre1} and Lemma \ref{lem:pre1'}, $U_{\theta}$ can be extended as a function in $C^0[-1,1]$, $U_{\theta}(-1)\in\{\tau_1(c_1), \tau_2(c_1)\}$, $U_{\theta}(1)\in\{\tau_1'(c_2), \tau_2'(c_2)\}$. 
	
	By Corollary \ref{lem:c1c2:LocalUniqueness} and (ii) of Lemma \ref{lem:c1c2:compare}, we know that there exists a constant $0<\delta_1<\frac{1}{2}$ such that $U_{\theta}<U_{\theta}^*$ in $(-1,-1+\delta_1)$. Similarly, by Corollary \ref{lem:c1c2:LocalUniqueness'} and (ii) of Lemma \ref{lem:c1c2:compare'}, we know that there exists a constant $0<\delta_2<\frac{1}{2}$ such that $U_{\theta}>U_{\theta}^*$ in $(1-\delta_2,1)$. 
	
	Therefore, there exists a point $\bar{x}\in (-1+\delta_1,1-\delta_2)$ such that $U_{\theta}(\bar{x})=U_{\theta}^*(\bar{x})$. Standard uniqueness theory of ODE implies that $U_{\theta}\equiv U_{\theta}^*$ in $(-1,1)$. This is a contradiction. 
\end{proof}

\begin{lem}\label{lem:c1c2:<c3bar}
	Suppose $c_1\geq -1$, $c_2\geq -1$, $c_3<\bar{c}_3(c_1,c_2)$, then (\ref{eq:UthP}) has no solution in $C^1(-1,1)$. 
\end{lem}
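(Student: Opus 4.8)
The plan is to argue by contradiction, comparing any hypothetical solution against the critical profile $U_\theta^*=U_\theta^*(c_1,c_2)$ from (\ref{eq1_1_1}). By Lemma \ref{lem:c1c2:eqc3bar}, $U_\theta^*$ is affine in $x$, solves (\ref{eq:UthP}) with $c_3$ replaced by $\bar c_3(c_1,c_2)$, and satisfies $U_\theta^*(-1)=\tau_2(c_1)$, $U_\theta^*(1)=\tau_1'(c_2)$. Suppose (\ref{eq:UthP}) had a solution $U_\theta\in C^1(-1,1)$ for some $c_3<\bar c_3(c_1,c_2)$. By Lemmas \ref{lem:pre1} and \ref{lem:pre1'} the limits $U_\theta(\pm1)$ exist and are finite, with $U_\theta(-1)\in\{\tau_1(c_1),\tau_2(c_1)\}$ and $U_\theta(1)\in\{\tau_1'(c_2),\tau_2'(c_2)\}$; in particular $U_\theta(-1)\le\tau_2(c_1)=U_\theta^*(-1)$ and $U_\theta(1)\ge\tau_1'(c_2)=U_\theta^*(1)$. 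Writing $L[v]:=(1-x^2)v'+2xv+\tfrac12 v^2$ for the left-hand side of (\ref{eq:UthP}), the key point is that, since $1-x^2>0$ on $(-1,1)$ and $c_3<\bar c_3$,
\[
	L[U_\theta^*]-L[U_\theta]=\bigl(\bar c_3(c_1,c_2)-c_3\bigr)(1-x^2)>0 \quad\text{in }(-1,1).
\]

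Next I would establish the two one-sided orderings $U_\theta^*>U_\theta$ near $x=-1$ and $U_\theta^*<U_\theta$ near $x=1$. Near $-1$: if $U_\theta(-1)=\tau_1(c_1)<\tau_2(c_1)$ (possible only when $c_1>-1$) the ordering is immediate by continuity; if $U_\theta(-1)=\tau_2(c_1)$, then either $c_1>-1$, so $\tau_2(c_1)>2$ and condition (i) of Lemma \ref{lem:c1c2:compare} applies with $U_\theta^*$ and $U_\theta$ in the roles of the two functions, or $c_1=-1$, so $\tau_2(-1)=2$ and I verify condition (ii): substituting $U_\theta^*$ gives $-2+U_\theta^*(x)=-(2+\sqrt{1+c_2})(1+x)$, whence $\frac{-2+U_\theta^*}{1-x^2}=-\frac{2+\sqrt{1+c_2}}{1-x}$ is bounded near $-1$, so the integral in (\ref{eq:c1c2:f1}) stays finite. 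In each case Lemma \ref{lem:c1c2:compare} gives either $U_\theta^*>U_\theta$ or $U_\theta^*\equiv U_\theta$ on a right neighborhood of $-1$; the latter is impossible, since it would force $L[U_\theta^*]=L[U_\theta]$ there, contradicting the strict inequality above. The same reasoning with Lemma \ref{lem:c1c2:compare'} (using $2+U_\theta^*(x)=(2+\sqrt{1+c_1})(1-x)$ when $c_2=-1$) yields $U_\theta^*<U_\theta$ near $x=1$.

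Finally I would derive the contradiction from a global sign argument on $g:=U_\theta^*-U_\theta$. Subtracting the two equations, $g$ satisfies the linear first-order ODE
\[
	g'+q(x)g=\bar c_3(c_1,c_2)-c_3>0,\qquad q(x):=\frac{2x+\tfrac12(U_\theta^*+U_\theta)}{1-x^2},
\]
where $q$ is continuous on $(-1,1)$. Hence at any interior zero $\bar x$ of $g$ one has $g'(\bar x)=\bar c_3(c_1,c_2)-c_3>0$, so every zero of $g$ is a strict crossing from negative to positive. But the boundary orderings just proved give $g>0$ near $-1$ and $g<0$ near $1$; letting $\bar x:=\inf\{x\in(-1,1):g(x)\le0\}$, continuity gives $\bar x\in(-1,1)$, $g(\bar x)=0$, and $g>0$ on $(-1,\bar x)$, contradicting $g'(\bar x)>0$, which forces $g<0$ immediately to the left of $\bar x$. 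This contradiction shows no such solution exists.

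The main obstacle is handling the degenerate boundary cases $c_1=-1$ and/or $c_2=-1$, where $U_\theta$ and $U_\theta^*$ share the same endpoint value ($2$ at $-1$, or $-2$ at $1$), so that continuity alone does not separate them; there one must invoke the refined alternative (ii) of Lemmas \ref{lem:c1c2:compare} and \ref{lem:c1c2:compare'} and check the associated weighted-integral finiteness condition, which is exactly where the explicit affine form of $U_\theta^*$ from (\ref{eq1_1_1}) is used.
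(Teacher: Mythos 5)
Your proof is correct and follows essentially the same route as the paper: both compare the hypothetical solution against the critical affine profile $U_\theta^*(c_1,c_2)$, use Lemmas \ref{lem:c1c2:compare} and \ref{lem:c1c2:compare'} to pin down the ordering near $x=\pm1$, and then derive a contradiction from a sign-crossing of the difference, which the strict inequality $(\bar c_3-c_3)(1-x^2)>0$ forbids. The only (cosmetic) differences are that you establish both endpoint orderings at once and run a single first-crossing argument, whereas the paper splits into two cases each using one endpoint, and that you verify the hypotheses of the comparison lemmas more explicitly than the paper does.
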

\begin{proof}
	If $U_{\theta}$ is a $C^1$ solution of (\ref{eq:UthP}) in $(-1,1)$. 
	By Lemma \ref{lem:pre1} and Lemma \ref{lem:pre1'}, $U_{\theta}$ can be extended as a function in $C^0[-1,1]$, $U_{\theta}(-1)\in\{\tau_1(c_1), \tau_2(c_1)\}$, $U_{\theta}(1)\in\{\tau_1'(c_2), \tau_2'(c_2)\}$. 

	By Lemma \ref{lem:c1c2:eqc3bar}, $U_{\theta}^*:=U_{\theta}^*(c_1,c_2)$ is the unique solution of (\ref{eq:UthP}) with $c_3 = \bar{c}_3(c_1,c_2)$. Since $c_3<\bar{c}_3(c_1,c_2)$, $U_{\theta}\not\equiv U_{\theta}^*$ in any open interval in $(-1,1)$. We first assume that $U_{\theta}(\bar{x})>U_{\theta}^*(\bar{x})$ at some point $\bar{x}\in(-1,1)$. Since $c_3<\bar{c}_3(c_1,c_2)$ we have
	\begin{equation}\label{eq:c1c2:<c3bar1}
		(1-x^2)U_{\theta}' + 2xU_{\theta}+\frac{1}{2}U_{\theta}^2 < (1-x^2){U_{\theta}^*}' + 2xU^*_{\theta}+\frac{1}{2}(U^*_{\theta})^2, \quad -1<x<1. 
	\end{equation}

%
%
	Since $U_{\theta}(-1)\leq U_{\theta}^*(-1)$, we have, in view of Lemma \ref{lem:c1c2:compare}, 
	there exists $\delta>0$ such that $U_{\theta}<U_{\theta}^*$ in $(-1,-1+\delta)$. 
	
	Now with $U_{\theta}(\bar{x})>U_{\theta}^*(\bar{x})$ and $U_{\theta}<U_{\theta}^*$ in $(-1,-1+\delta)$, there exist a point $\xi\in(-1+\delta, \bar{x})$ such that
	$$
		U_{\theta}(\xi) = U_{\theta}^*(\xi), \quad \quad U_{\theta}'(\xi) \geq {U_{\theta}^*}'(\xi), 
	$$
	which contradicts inequality (\ref{eq:c1c2:<c3bar1}) at $\xi$. 
	
	Similar arguments lead to a contradiction when $U_{\theta}(\bar{x})<U_{\theta}^*(\bar{x})$ for some $\bar{x}\in (-1,1)$ by showing $U_{\theta}>U_{\theta}^*$ near $x=1$. The lemma is proved.   
\end{proof}
\begin{lem}\label{lem:c1c2:>c3bar}
	Suppose $c_1\geq -1$, $c_2\geq -1$, $c_3 > \bar{c}_3(c_1,c_2)$. Let $U_{\theta}^+(c)$ be the  power series solution, obtained in Lemma \ref{lem:c1c2:LocalSeriesSol} with $U_{\theta}^+(c)(-1)=\tau_2(c_1)$, of (\ref{eq:UthP}) in $(-1,-1+\delta)$, 
	then $U_{\theta}^+(c)$ can be extended to be a solution of (\ref{eq:UthP}) in $(-1,1)$, and $U_{\theta}^+(c)(1) = \tau_2'(c_2)$. 

	Let $U_{\theta}^-(c)$ be the power series solution, obtained in Lemma \ref{lem:c1c2:LocalSeriesSol'} with $U_{\theta}^-(c)(1)=\tau_1'(c_2)$, of (\ref{eq:UthP}) in $(1-\delta,1)$, 
	then $U_{\theta}^-(c)$ can be extended to be a solution of (\ref{eq:UthP}) in $(-1,1)$, and $U_{\theta}^-(c)(-1) = \tau_1(c_1)$. 
	Moreover,  $U_{\theta}^-(c)<U_{\theta}^+(c)$ in $(-1,1)$. 
\end{lem}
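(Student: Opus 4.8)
The plan is to sandwich the continuation of $U_{\theta}^+(c)$ strictly above, and $U_{\theta}^-(c)$ strictly below, the explicit solution $U_{\theta}^*:=U_{\theta}^*(c_1,c_2)$ of Lemma \ref{lem:c1c2:eqc3bar}, and to read the endpoint values off this ordering. Recall that $U_{\theta}^*$ solves (\ref{eq:UthP}) with $c_3$ replaced by $\bar{c}_3(c_1,c_2)$; write $P_{\bar{c}}$ for its right-hand side, so that $P_c-P_{\bar{c}}=(c_3-\bar{c}_3)(1-x^2)>0$ on $(-1,1)$ since $c_3>\bar{c}_3$. Because both $U_{\theta}^{\pm}(c)$ solve (\ref{eq:UthP}) with $P_c$ while $U_{\theta}^*$ solves it with the smaller $P_{\bar{c}}$, the pair $(U_{\theta}^+(c),U_{\theta}^*)$ satisfies the hypotheses of Lemma \ref{lem:c1c2:compare} (strict in the interior). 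By the symmetry $x\mapsto -x$, $U\mapsto -U$, which exchanges $c_1\leftrightarrow c_2$ and interchanges Lemmas \ref{lem:pre1}/\ref{lem:pre1'} and \ref{lem:c1c2:compare}/\ref{lem:c1c2:compare'}, it suffices to treat $U_{\theta}^+(c)$; the statements for $U_{\theta}^-(c)$ then follow by applying the conclusion to $\tilde U_\theta(x):=-U_{\theta}^-(c)(-x)$, which is the ``$U_{\theta}^+$'' of the parameter $(c_2,c_1,c_3)$.

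First I would prove $U_{\theta}^+(c)>U_{\theta}^*$ near $x=-1$. Both take the value $\tau_2(c_1)$ there. If $c_1>-1$ then $\tau_2(c_1)>2$ and condition (i) of Lemma \ref{lem:c1c2:compare} applies; if $c_1=-1$ then $\tau_2(c_1)=2$ and, since the power series of Lemma \ref{lem:c1c2:LocalSeriesSol} gives $U_{\theta}^+(c)(x)-2=O(1+x)$, the integrand in (\ref{eq:c1c2:f1}) stays bounded as $x\to-1^+$, so condition (ii) applies. In either case Lemma \ref{lem:c1c2:compare} yields $U_{\theta}^+(c)>U_{\theta}^*$ on some $(-1,-1+\delta)$, the coincidence alternative being excluded since equality on an interval would force $P_c\equiv P_{\bar{c}}$. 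I would then propagate this strict ordering along the maximal interval $(-1,\beta)\subset(-1,1)$ to which $U_{\theta}^+(c)$ extends as a solution: at a hypothetical first interior point $\xi$ where the graphs meet, subtracting the two copies of (\ref{eq:UthP}) and using $U_{\theta}^+(c)(\xi)=U_{\theta}^*(\xi)$ gives $(1-\xi^2)\big[(U_{\theta}^+)'(\xi)-(U_{\theta}^*)'(\xi)\big]=P_c(\xi)-P_{\bar{c}}(\xi)>0$, hence $(U_{\theta}^+)'(\xi)>(U_{\theta}^*)'(\xi)$, contradicting the fact that a graph meeting another from above has the smaller slope there. Thus $U_{\theta}^+(c)>U_{\theta}^*$ throughout $(-1,\beta)$.

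To show $\beta=1$ I would use the Riccati structure for an a priori upper bound together with the lower bound just obtained. Writing (\ref{eq:UthP}) as $U_\theta'=(1-x^2)^{-1}\big(P_c-2xU_\theta-\tfrac12U_\theta^2\big)$ and using that $P_c$ is bounded on $[-1,1]$ while $|2xU_\theta|\le 2|U_\theta|$, there is $M_0>0$ with $P_c(x)-2xU-\tfrac12U^2<0$ whenever $U\ge M_0$ and $|x|<1$. With $M:=\max\{M_0,\tau_2(c_1)+1\}$ the solution cannot cross the level $M$ from below, so $U_{\theta}^+(c)<M$ on $(-1,\beta)$; combined with $U_{\theta}^+(c)>U_{\theta}^*\ge\min_{[-1,1]}U_{\theta}^*$, the solution stays bounded, hence has a finite limit at $\beta$ and, by standard ODE continuation, extends past any $\beta<1$. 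Therefore $\beta=1$ and $U_{\theta}^+(c)>U_{\theta}^*$ on all of $(-1,1)$.

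It remains to identify $U_{\theta}^+(c)(1)$, which is the crux of the argument. By Lemma \ref{lem:pre1'} this limit exists and equals $\tau_1'(c_2)$ or $\tau_2'(c_2)$. If $c_2=-1$ these two values both equal $-2$, so $U_{\theta}^+(c)(1)=\tau_2'(c_2)$ automatically. If $c_2>-1$, suppose for contradiction that $U_{\theta}^+(c)(1)=\tau_1'(c_2)=U_{\theta}^*(1)$; since $\tau_1'(c_2)<-2$, condition (i) of Lemma \ref{lem:c1c2:compare'} applies to the pair $(U_{\theta}^+(c),U_{\theta}^*)$ and forces $U_{\theta}^+(c)<U_{\theta}^*$ near $x=1$, contradicting $U_{\theta}^+(c)>U_{\theta}^*$. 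Hence $U_{\theta}^+(c)(1)=\tau_2'(c_2)$ in all cases. Applying the whole argument to $\tilde U_\theta$ shows that $U_{\theta}^-(c)$ extends to $(-1,1)$ with $U_{\theta}^-(c)(-1)=\tau_1(c_1)$ and $U_{\theta}^-(c)<U_{\theta}^*$ there, so that $U_{\theta}^-(c)<U_{\theta}^*<U_{\theta}^+(c)$ in $(-1,1)$, giving the asserted strict inequality. The main obstacle is this endpoint identification: the interior comparison $U_{\theta}^+(c)>U_{\theta}^*$ must be upgraded to a statement at $x=1$, and the borderline case $c_2=-1$ (where $U_{\theta}^+(c)$ approaches $-2$ only logarithmically, so the two candidate boundary values coincide) must be separated from the generic case $c_2>-1$ where the comparison lemma supplies the contradiction.
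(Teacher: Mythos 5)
Your proof is correct and follows the same overall strategy as the paper's: compare the continued solution with the explicit $U_{\theta}^*(c_1,c_2)$ of Lemma \ref{lem:c1c2:eqc3bar} via Lemma \ref{lem:c1c2:compare}, use the resulting lower bound together with an a priori upper bound to rule out blow-up before $x=1$, and then identify the boundary value among the two candidates supplied by Lemma \ref{lem:pre1'}. The one step where you genuinely diverge is the identification $U_{\theta}^+(c)(1)=\tau_2'(c_2)$ for $c_2>-1$: the paper first constructs $U_{\theta}^-(c)$ from the power series at $x=1$ with value $\tau_1'(c_2)$ and then invokes Corollary \ref{lem:c1c2:LocalUniqueness'} to conclude that $U_{\theta}^+(c)$, being strictly larger, cannot also have limit $\tau_1'(c_2)$; you instead apply condition (i) of Lemma \ref{lem:c1c2:compare'} directly to the pair $(U_{\theta}^+(c),U_{\theta}^*)$, which would force $U_{\theta}^+(c)<U_{\theta}^*$ near $x=1$ if the limits agreed, contradicting the global ordering $U_{\theta}^+(c)>U_{\theta}^*$. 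Your variant is self-contained for $U_{\theta}^+(c)$ (it does not require $U_{\theta}^-(c)$ to have been constructed first), while the paper's version recycles the local uniqueness corollary it has already isolated; both rest on the same comparison mechanism. Your remaining deviations --- the symmetry reduction $\tilde U_\theta(x)=-U_{\theta}^-(c)(-x)$ in place of ``similar arguments,'' and the barrier argument for boundedness in place of the paper's contradiction with a sequence $y_i$ where $U_{\theta}^+(y_i)\to\infty$ and $(U_{\theta}^+)'(y_i)\ge 0$ --- are equivalent and equally valid.
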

\begin{proof}
	We only need to prove that $U_{\theta}^+:=U_{\theta}^+(c)$ can be extended to be a solution of (\ref{eq:UthP}) in $(-1,1)$ and $U_{\theta}^+(1)=\tau_2'(c_2)$, since similar arguments work for $U_{\theta}^-(c)$. 
	
	Standard existence theory of ODE implies that $U_{\theta}^+$ can be extended to the maximal interval of existence, say $(-1,\xi)$, $\xi\in(-1+\delta, 1]$. Since $c_3>\bar{c}_3(c_1,c_2)$, 
	we have, with  $U_{\theta}^*:=U_{\theta}^*(c_1,c_2)$,
	\begin{equation*}
		(1-x^2){U_{\theta}^+}' + 2x{U_{\theta}^+}+\frac{1}{2}(U_{\theta}^+)^2 > (1-x^2){U_{\theta}^*}' + 2xU_{\theta}^*+\frac{1}{2}(U_{\theta}^*)^2, \quad -1<x<\xi.  
	\end{equation*}
	Since $U_{\theta}^+(-1)=U_{\theta}^*(-1) = \tau_2(c_1) \geq 2$, by Lemma \ref{lem:c1c2:compare} and the fact that $U_{\theta}^+, U_{\theta}^*$ can not coincide in any open interval, we have $U_{\theta}^+>U_{\theta}^*$ in $(-1,\xi)$. 
	
	
	If $\xi<1$, since $U_{\theta}^+$ is bounded from below by $U_{\theta}^*$, there exists a sequence of points $\{x_i\}$ satisfying 
	\begin{align*}
		& x_1<x_2<x_3<\cdots<\xi, & &\lim_{i\rightarrow \infty} x_i = \xi,  \\
		& U_{\theta}^+(x_1)<U_{\theta}^+(x_2)<U_{\theta}^+(x_3)<\cdots, & &\lim_{i\rightarrow \infty} U_{\theta}^+(x_i) = +\infty.   
	\end{align*}
	Then, in each interval $(x_i,x_{i+1})$, we can find a point $y_i$ such that 
	$$
		x_i < y_i < x_{i+1}, \quad U_{\theta}^+(y_i) \geq U_{\theta}^+(x_i),  \quad {U_{\theta}^+}'(y_i) \geq 0. 
	$$
	Taking $x=y_i$ in equation (\ref{eq:UthP}), and sending $i$ to infinity, we obtain a contradiction. So $\xi=1$. By Lemma \ref{lem:pre1}, $\lim_{x\to1^+}U_{\theta}^+(x)$ exists and is finite. 
	
	We have extended $U_{\theta}^+$ to be a solution of (\ref{eq:UthP}) in $C^1(-1,1)\cap C^0[-1,1]$ and $U_{\theta}^+>U_{\theta}^*$ in $(-1,1)$. 
	
	Similarly, $U_{\theta}^-$ can be extended to $C^0[-1,1]$, and $U_{\theta}^-<U_{\theta}^*<U_{\theta}^+$ in $(-1,1)$. 
	
	By Lemma \ref{lem:pre1'}, $U_{\theta}^+(1)\in\{\tau_1'(c_2),\tau_2'(c_2)\}$. If $c_2=-1$, $\tau_1'(c_2)=\tau_2'(c_2)$, so $U_{\theta}^+(1)=\tau_2'(c_2)$. If $c_2>-1$, since $U_{\theta}^-(1)=\tau_1'(c_2)$ and $U_{\theta}^+>U_{\theta}^-$ in $(-1,1)$, by Corollary \ref{lem:c1c2:LocalUniqueness'}, we have $U_{\theta}^+(1) = \tau_2'(c_2)$.  Similarly, $U_{\theta}^-(-1)=\tau_1(c_1)$. Lemma \ref{lem:c1c2:>c3bar} is proved. 
\end{proof}

\begin{lem}\label{lem:c1c2:between}
	Suppose $c_1\geq -1$, $c_2\geq -1$, $c_3 > \bar{c}_3(c_1,c_2)$,  
	then any $C^1$ solution $U_{\theta}$ of (\ref{eq:UthP}) in $(-1,1)$ other than $U_{\theta}^{\pm}(c)$ satisfies
	$$
		U_{\theta}^-(c)<U_{\theta}<U_{\theta}^+(c), \quad \mbox{in } (-1,1), 
	$$
	$$
		U_{\theta}(-1) = \tau_1(c_1), \quad U_{\theta}(1) = \tau_2'(c_2). 
	$$
\end{lem}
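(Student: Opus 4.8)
The plan is to combine a global uniqueness observation for the reduced ODE with the local comparison results already established near the two poles. First I would record that, by Lemma \ref{lem:pre1} and Lemma \ref{lem:pre1'}, any $C^1$ solution $U_\theta$ on $(-1,1)$ extends to $C^0[-1,1]$ with $U_\theta(-1)\in\{\tau_1(c_1),\tau_2(c_1)\}$ and $U_\theta(1)\in\{\tau_1'(c_2),\tau_2'(c_2)\}$. Rewriting (\ref{eq:UthP}) in normal form $U_\theta' = (1-x^2)^{-1}\bigl(P_c(x)-2xU_\theta-\tfrac12 U_\theta^2\bigr)$ on the open interval $(-1,1)$, the right-hand side is smooth and locally Lipschitz in $U_\theta$, so through each interior point there passes a unique solution. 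Since $U_\theta^+(c)$ and $U_\theta^-(c)$ are themselves solutions on $(-1,1)$ by Lemma \ref{lem:c1c2:>c3bar}, a solution $U_\theta\neq U_\theta^{\pm}(c)$ can never meet $U_\theta^{\pm}(c)$ at an interior point; hence each of the continuous functions $U_\theta-U_\theta^+(c)$ and $U_\theta-U_\theta^-(c)$ is nowhere zero on $(-1,1)$ and therefore has a constant sign. The whole problem thus reduces to fixing these two signs, which I would do by examining the behavior near $x=-1$ and near $x=1$ respectively.

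For the upper bound $U_\theta<U_\theta^+(c)$ I would look near $x=-1$. If $c_1>-1$, then $\tau_1(c_1)<2<\tau_2(c_1)$; were $U_\theta(-1)=\tau_2(c_1)$, Corollary \ref{lem:c1c2:LocalUniqueness} would force $U_\theta=U_\theta^+(c)$, contrary to assumption, so $U_\theta(-1)=\tau_1(c_1)<\tau_2(c_1)=U_\theta^+(c)(-1)$ and $U_\theta<U_\theta^+(c)$ near $-1$ by continuity. If $c_1=-1$, then $U_\theta(-1)=U_\theta^+(c)(-1)=2$ and I would instead invoke part (ii) of Lemma \ref{lem:c1c2:compare} with $U_\theta^+(c)$ in the role of the larger function: its integral condition (\ref{eq:c1c2:f1}) holds because $U_\theta^+(c)$ is the analytic series solution of Lemma \ref{lem:c1c2:LocalSeriesSol}, so $U_\theta^+(c)(x)-2=O(1+x)$ and the integrand is bounded near $-1$; coincidence on a subinterval is excluded by the uniqueness just noted, leaving $U_\theta^+(c)>U_\theta$ near $-1$. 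In either case the constant-sign conclusion upgrades this to $U_\theta<U_\theta^+(c)$ on all of $(-1,1)$. The lower bound $U_\theta>U_\theta^-(c)$ follows by the mirror-image argument near $x=1$, using Corollary \ref{lem:c1c2:LocalUniqueness'} when $c_2>-1$ and part (ii) of Lemma \ref{lem:c1c2:compare'} when $c_2=-1$, the integral condition again holding since $U_\theta^-(c)(x)+2=O(1-x)$.

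Finally I would read off the boundary values. For $c_1>-1$ the first case above already gives $U_\theta(-1)=\tau_1(c_1)$, while for $c_1=-1$ one has $\tau_1(c_1)=\tau_2(c_1)=2$, so again $U_\theta(-1)=\tau_1(c_1)$; symmetrically $U_\theta(1)=\tau_2'(c_2)$ in both cases $c_2>-1$ and $c_2=-1$. I expect the only genuine subtlety to be the degenerate cases $c_1=-1$ and $c_2=-1$, where the two candidate boundary values collide at $\pm 2$ and the crude comparison of endpoint data is unavailable; there the argument really needs the refined part (ii) of the comparison lemmas, and in particular the verification that the analytic solutions $U_\theta^{\pm}(c)$ approach their limits linearly, so that the integral condition (\ref{eq:c1c2:f1}) is satisfied.
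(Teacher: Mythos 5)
Your proposal is correct and follows essentially the same route as the paper: extend to $C^0[-1,1]$ via Lemmas \ref{lem:pre1} and \ref{lem:pre1'}, use interior ODE uniqueness to get a constant sign for $U_\theta-U_\theta^{\pm}(c)$, and fix that sign near the poles using Corollary \ref{lem:c1c2:LocalUniqueness} (resp.\ \ref{lem:c1c2:LocalUniqueness'}) together with part (ii) of Lemma \ref{lem:c1c2:compare} (resp.\ \ref{lem:c1c2:compare'}) in the degenerate case, checking (\ref{eq:c1c2:f1}) from the analytic expansion of $U_\theta^{\pm}(c)$. The paper packages the sign determination as a single contradiction argument covering $c_1>-1$ and $c_1=-1$ at once, but the ingredients and logic are the same as yours.
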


\begin{proof}
	By Lemma \ref{lem:pre1} and Lemma \ref{lem:pre1'},  $U_{\theta}$ can be extended to $C^0[-1,1]$ with $U_{\theta}(-1)=\tau_1(c_1)$ or $\tau_2(c_1)$, and $U_{\theta}(1)=\tau_1'(c_2)$ or $\tau'_2(c_2)$.  
	
	We only need to prove $U_{\theta}<U_{\theta}^+(c)$ in $(-1,1)$ and $U_{\theta}(-1)=\tau_1(c_1)$, since similar arguments imply that $U_{\theta}>U_{\theta}^-(c)$ in $(-1,1)$ and $U_{\theta}(1)=\tau_2'(c_2)$.  
	
	From the standard uniqueness theory of ODE, we know that the graph of $U_{\theta}$ and $U_{\theta}^+(c)$ can not intersect in $(-1,1)$. So we either have $U_{\theta}<U_{\theta}^+(c)$ in $(-1,1)$ or $U_{\theta}>U_{\theta}^+(c)$ in $(-1,1)$. 
	
	If $U_{\theta}>U_{\theta}^+(c)$ in $(-1,-1+\delta)$, then, by Lemma \ref{lem:pre1}, $U_{\theta}(-1)=U_{\theta}^+(c)(-1)=\tau_2(c_1)\geq 2$. Note that $U_{\theta}^+(c)$ satisfies (\ref{eq:c1c2:f1}), we can apply Lemma \ref{lem:c1c2:compare} 
	  to obtain $U_{\theta}\leq U_{\theta}^+(c)$, a contradiction. So $U_{\theta}<U_{\theta}^+(c)$ in $(-1,1)$. 
	
	If $\tau_1(c_1)<\tau_2(c_1)$, the uniqueness result Corollary \ref{lem:c1c2:LocalUniqueness} implies that $U_{\theta}(-1)=\tau_1(c_1)$. If $\tau_1(c_1)=\tau_2(c_1)$, we again have $U_{\theta}(-1)=\tau_1(c_1)$. Lemma \ref{lem:c1c2:between} is proved. 
\end{proof}

\noindent \emph{Proof of Theorem \ref{thm1_1}}: For $c\in J$, if $c_3=\bar{c}_3$, by Lemma \ref{lem:c1c2:eqc3bar},  $U^*_{\theta}(c_1,c_2)$ in (\ref{eq1_1_1}) 
is the unique solution of (\ref{eq:UthP}) in $(-1,1)$.

If $c_3>\bar{c}_3$, let $U^{+}_{\theta}(c)$ and $U^{-}_{\theta}(c)$ be the functions in Lemma \ref{lem:c1c2:>c3bar}. By Lemma \ref{lem:c1c2:LocalSeriesSol}, Lemma \ref{lem:c1c2:LocalSeriesSol'}, Lemma \ref{lem:c1c2:>c3bar} and Lemma \ref{lem:c1c2:between}, $U^{\pm}_\theta(c)\in C^{\infty}(-1,1)\cap C^0[-1,1]$ satisfy (\ref{eq:UthP}) in $(-1,1)$, and $U^-_{\theta}(c)<U^+_{\theta}(c)$.  Moreover, $U^{-}_{\theta}(c)\le U_{\theta}\le U^+_{\theta}(c)$ for any solution $U_{\theta}$ of (\ref{eq:UthP}) in $(-1,1)$.

Now we prove the continuity of $U^+_{\theta}(c)(x)$ in $(c,x)$, the same arguments applies to $U^-_{\theta}$.

For every $(\hat{c},\hat{x})\in J\times [-1,1]$, we prove the continuity of $U_{\theta}^+$ at $(\hat{c},\hat{x})$. By Lemma \ref{lem:c1c2:LocalSeriesSol}, there exists some $\delta>0$,  such that $U^+_{\theta}(c)(x)$ is continuous in $(B_1(\hat{c})\cap J)\times [-1,-1+\delta]$, where $B_1(\hat{c})$ is the unit ball in $\mathbb{R}^3$ centered at $\hat{c}$.

%


 Consider
 \begin{equation}\label{eq_cor2_7}
   \left\{\begin{split}
      & (1-x^2)U_{\theta}'+2xU_{\theta}+\frac{1}{2}U_{\theta}^2=P_c(x)=c_1(1-x)+c_2(1+x)+c_3(1-x^2),\\
      &U_{\theta}(-1+\frac{\delta}{2})=a,  
   \end{split}
   \right.
 \end{equation}
 for $a$ close to $a_0:=U_{\theta}^+(c)(-1+\frac{\delta}{2})$.
 
 By standard ODE theories, for any $0<\epsilon<2-\delta$, there exists some positive constants $\mu$,  such that  $U^+_{\theta}\in C((a_0-\mu,a_0+\mu)\times (B_1(\hat{c})\cap J)\times [-1+\frac{\delta}{4}, 1-\epsilon])$. 



The continuity of $U^+_{\theta}(c)(x)$ at $\hat{x}=1$ follows from Lemma \ref{lemAp_1'}, which will be given later.
\qed

\bigskip

\noindent \emph{Proof of Theorem \ref{thm1_2}}: Let $(c,\gamma)\in I$. If $c_3=\bar{c}_3$, then $\gamma=\gamma^+(c)=\gamma^-(c)$ by Theorem \ref{thm1_1}, $U^{c,\gamma}_{\theta}:=U^{\pm}_{\theta}(c)$ given by (\ref{eq1_1_1}) is the unique solution of (\ref{eq:UthP}) satisfying $U^{c,\gamma}_{\theta}(0)=\gamma$.

If $c_3 > \bar{c}_3(c_1,c_2)$, and $\gamma=\gamma^{\pm}(c)$, then $U^{c,\gamma}_{\theta}:=U^{\pm}_{\theta}(c)$ is the unique solution of (\ref{eq:UthP}) satisfying $U^{c,\gamma}_{\theta}(0)=\gamma$.

For $c_3 > \bar{c}_3(c_1,c_2)$, and $\gamma^-(c)<\gamma<\gamma^+(c)$,  let  $U_{\theta}^{c,\gamma}$ be the unique local solution of (\ref{eq:UthP}) satisfying $U_{\theta}^{c,\gamma}(0)=\gamma$.  By standard ODE theory, $U_{\theta}^{c,\gamma}$ can be extended to a $C^{\infty}$ solution in $(-1,1)$ satisfying $U_{\theta}^-(c)<U_{\theta}^{c,\gamma}<U_{\theta}^+(c)$.

By Lemma \ref{lem:pre1} and Lemma \ref{lem:pre1'}, $U_{\theta}^{c,\gamma}$ can be extend as a function in $C^0[-1,1]$. 

To complete the proof of Theorem \ref{thm1_2}, it remains to show that $\{U^{c,\gamma}_{\theta}|(c,\gamma)\in J\}$ are all the solutions.

For $c\in \mathbb{R}^3$, let $U_{\theta}$ be a solution of (\ref{eq:UthP}) in $(-1,1)$, By Lemma \ref{lem:pre1} and Lemma \ref{lem:pre1'}, $c_1\ge -1$ and $c_2\ge -1$. Then by Lemma \ref{lem:c1c2:<c3bar}, $c_3\ge \bar{c}_3$. So $c\in J$. By Theorem \ref{thm1_1}, we have $U^-_{\theta}(c)\le U_{\theta}\le U_{\theta}^+(c)$.  So $\gamma:=U_{\theta}(0)$ satisfies $\gamma^-(c)\le\gamma\le\gamma^+(c)$, and $U_{\theta}=U^{c,\gamma}_{\theta}$.

\qed


\bigskip

\begin{lem}\label{lem:c1c2:foliate}
	Suppose $c_1\geq -1$, $c_2\geq -1$, $c_3 > \bar{c}_3(c_1,c_2)$, then $\gamma^-(c)<\gamma^+(c)$, and the graphs
	$$
		K_1(\gamma):=\{ \left( x, U_{\theta}^{c,\gamma}(x) \right) \mid -1<x<1 \}, \quad \gamma^-(c) \le \gamma \le \gamma^+(c), 
	$$
	foliate the set 
	$$
		K_2:=\{ (x,y) \mid -1<x<1, U_{\theta}^{c,\gamma^-}(x) \le y \le U_{\theta}^{c,\gamma^+}(x) \} 
	$$
	in the sense that for any $\gamma,\gamma'\in \mathbb{R}$, $\gamma^-(c) \leq \gamma < \gamma' \leq \gamma^+(c)$, $U_{\theta}^{c,\gamma} < U_{\theta}^{c,\gamma'}$ in $(-1,1)$ and $K_2 = \bigcup_{\gamma^-(c)\le\gamma \le \gamma^+(c)} K_1(\gamma)$. 
	Moreover, $U_{\theta}^{c,\gamma}$ is a continuous function of  $(c, \gamma,x)$ in $J \times [\gamma^-(c),\gamma^+(c)]\times(-1,1)$. 
\end{lem}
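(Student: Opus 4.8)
The plan is to derive the three assertions from the structural results already established for $c_3>\bar{c}_3(c_1,c_2)$, namely Lemma \ref{lem:c1c2:>c3bar} and Lemma \ref{lem:c1c2:between}, together with Theorem \ref{thm1_2} and the standard continuous-dependence theory for the ODE (\ref{eq:UthP}) away from $x=\pm 1$. The strict inequality $\gamma^-(c)<\gamma^+(c)$ is then immediate: Lemma \ref{lem:c1c2:>c3bar} gives $U_\theta^-(c)<U_\theta^+(c)$ throughout $(-1,1)$, and evaluating at $x=0$ yields $\gamma^-(c)=U_\theta^-(c)(0)<U_\theta^+(c)(0)=\gamma^+(c)$.

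Next I would establish the ordering that makes the family a foliation. On the open interval $(-1,1)$ the factor $1-x^2$ is nonzero, so (\ref{eq:UthP}) can be written as $U_\theta'=\bigl(P_c(x)-2xU_\theta-\frac{1}{2}U_\theta^2\bigr)/(1-x^2)$, whose right-hand side is real-analytic, hence locally Lipschitz, in $U_\theta$. Thus two solutions agreeing at a single point of $(-1,1)$ must coincide there by ODE uniqueness, so distinct members of the family never cross in $(-1,1)$. Since $U_\theta^{c,\gamma}(0)=\gamma$, the difference $U_\theta^{c,\gamma'}-U_\theta^{c,\gamma}$ is positive at $x=0$ when $\gamma<\gamma'$ and therefore stays positive; this gives $U_\theta^{c,\gamma}<U_\theta^{c,\gamma'}$ in $(-1,1)$ for $\gamma^-(c)\le\gamma<\gamma'\le\gamma^+(c)$.

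The continuity of $U_\theta^{c,\gamma}$ in $(c,\gamma,x)$ is the technical heart, and I would prove it by continuous dependence based at the regular point $x=0$. For $(c,\gamma)\in I$ the solution exists on all of $(-1,1)$, and on any fixed $[a,b]\subset(-1,1)$ the coefficient $1-x^2$ is bounded away from zero while the solution remains trapped in the fixed compact set between $U_\theta^-(c)$ and $U_\theta^+(c)$, which are continuous on $J\times[-1,1]$ by Theorem \ref{thm1_1}. The standard continuation-plus-continuous-dependence theorem then shows that the solution of the initial value problem with $U_\theta(0)=\gamma$ depends continuously, uniformly on $[a,b]$, on $(c,\gamma)$. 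Because $U_\theta^{c,\gamma^\pm(c)}=U_\theta^\pm(c)$, the same base-point formulation captures the boundary members, so no special treatment of $\gamma=\gamma^\pm(c)$ is required, and the degeneracy of (\ref{eq:UthP}) at $x=\pm 1$ (where the endpoint values $\tau_i,\tau_i'$ jump) never enters since we keep $x\in(-1,1)$. Combining uniform convergence on $[a,b]$ with $x^n\to x^0\in(-1,1)$ yields joint continuity.

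Finally, the foliation identity follows from monotonicity and continuity. The inclusion $\bigcup_\gamma K_1(\gamma)\subseteq K_2$ is immediate from the ordering. For the reverse inclusion, fix $(x_0,y_0)\in K_2$ and consider $\gamma\mapsto U_\theta^{c,\gamma}(x_0)$ on $[\gamma^-(c),\gamma^+(c)]$: it is continuous by the previous step, strictly increasing by the ordering, and takes the endpoint values $U_\theta^{c,\gamma^-}(x_0)$ and $U_\theta^{c,\gamma^+}(x_0)$, which bracket $y_0$; the intermediate value theorem produces a $\gamma$ with $U_\theta^{c,\gamma}(x_0)=y_0$, i.e. $(x_0,y_0)\in K_1(\gamma)$. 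I expect the continuity step to be the main obstacle: the care lies in verifying that nearby initial value problems persist across the whole of $[a,b]$ and that the boundary solutions $U_\theta^\pm(c)$ are recovered from the $x=0$ formulation, both of which reduce to the a priori trapping between $U_\theta^\pm(c)$ and the standard ODE theory.
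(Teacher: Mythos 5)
Your proposal is correct and follows essentially the same route as the paper: strict ordering via ODE uniqueness on $(-1,1)$, joint continuity via continuous dependence from the regular base point $x=0$ using the a priori trapping between $U_\theta^\pm(c)$, and the covering of $K_2$ from monotonicity. The only cosmetic difference is in the surjectivity step, where the paper solves the initial value problem at $(x_0,y_0)$ and reads off $\gamma=U_\theta(0)$, while you apply the intermediate value theorem to $\gamma\mapsto U_\theta^{c,\gamma}(x_0)$; the two arguments rest on the same facts.
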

\begin{proof}
	By standard uniqueness theories of ODE,  
	$$
		U_{\theta}^{c,\gamma^-}< U_{\theta}^{c,\gamma}< U_{\theta}^{c,\gamma'} < U_{\theta}^{c,\gamma^+} \quad \mbox{in }(-1,1),\quad \textrm{ for } \gamma^-(c)<\gamma<\gamma'<\gamma^+(c). 
	$$
	
	It is obvious that $K_1(\gamma)\subseteq K_2$. On the other hand, let $(x_0,y_0)\in K_2$, so $-1<x_0<1$ and 
	$U_{\theta}^{c,\gamma^-}(x_0)<y_0<U_{\theta}^{c,\gamma^+}(x_0)$. 
	By standard existence and uniqueness theories of ODE, there exists a $C^1$ solution $U_{\theta}$ of (\ref{eq:UthP}) in $(-1,1)$ satisfying $U_{\theta}(x_0)=y_0$ and 
	$U_{\theta}^{c,\gamma^-}<U_{\theta}<U_{\theta}^{c,\gamma^+}$ in $(-1,1)$. 
	In particular, 
	$$
		\gamma^- = U_{\theta}^{c,\gamma^-}(0)<U_{\theta}(0)<U_{\theta}^{c,\gamma^+}(0)=\gamma^+,
	$$
	$U_{\theta} = U_{\theta}^{c,\gamma}$ with 
	$\gamma=U_{\theta}(0)$
	 and therefore $x_0,y_0\in K_1(\gamma)$. We have proved that $K_2 = \bigcup_{\gamma^-\le\gamma\le \gamma^+} K_1(\gamma)$. 
	
	The continuity of $U_{\theta}^{c,\gamma}$ for $(c, \gamma,x)$ in $J \times [\gamma^-(c),\gamma^+(c)]\times (-1,1)$ can be derived from (\ref{eq:UthP}), and the continuous dependence of ODE on its boundary conditions.
%
\end{proof}

\noindent \emph{Proof of Theorem \ref{thm1_3}}:
Theorem \ref{thm1_3} follows from Lemma \ref{lem:c1c2:eqc3bar} - Lemma \ref{lem:c1c2:foliate}. 
\qed

\bigskip

\bigskip


\bigskip

\subsection{Proof of Theorem \ref{thm:c1c2} and Theorem \ref{propA_1}}

%
%
%

In the following context,   in $J\cap\{c\in J\mid c_1=-1\}$, $U_{\theta}^+(c)=U_{\theta}^+(-1,c_2,c_3)$ is viewed as a function of $(c_2,c_3)$, and $\partial_c^{\alpha}U_{\theta}^+(c)(x)$ means $\partial_{(c_2,c_3)}^{\alpha}U_{\theta}^+(c)(x)$.  In $ J\cap\{c\mid c_2=-1\}$, $U_{\theta}^-(c)=U_{\theta}^-(c_1,-1,c_3)$ is viewed as a function of $(c_1,c_3)$, and $\partial_c^{\alpha}U_{\theta}^-(c)(x)$ means $\partial_{(c_1,c_3)}^{\alpha}U_{\theta}^-(c)(x)$. 
\begin{lem}\label{lem2_gamma_1}
    For any integer $m\ge 0$,  and any compact subset $K$ contained in either $J\setminus\{c \mid c_1=-1\}$ or $J\cap\{c\in J\mid c_1=-1\}$, there exist some positive constants $\delta$ and $C$, depending only on $m$ and $K$, such that $U_{\theta}^+(c)\in C^m(K\times (-1,-1+\delta))$, and 
    \begin{equation}\label{eq2_7_00}
        |\partial_c^{\alpha}U_{\theta}^+(c)(x)\mid \le C, \quad  x\in (-1,-1+\delta), c\in K, |\alpha|\le m.
    \end{equation}
\end{lem}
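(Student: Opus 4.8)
The plan is to exploit the explicit power series representation of $U_\theta^+(c)$ near $x=-1$ provided by Lemma \ref{lem:c1c2:LocalSeriesSol}. Writing $s=1+x$ and $\tau=\tau_2(c_1)$, we have $U_\theta^+(c)(x)=\tau+\sum_{n=1}^\infty a_n(c)\,s^n$, where $a_1,a_2$ are given explicitly and $a_n$ for $n\ge 3$ is determined by the recurrence (\ref{eq2_2_1}). The lemma will follow once I establish that, for every multi-index $\alpha$ with $|\alpha|\le m$, the parameter-derivatives $\partial_c^\alpha a_n(c)$ are well defined and smooth on $K$ and obey a uniform geometric bound $|\partial_c^\alpha a_n(c)|\le C(m,K)\,b^n$ for some $b=b(m,K)>1$. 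Given this, choosing $\delta<1/(2b)$ allows term-by-term differentiation of the series in both $c$ and $x$ and yields simultaneously $U_\theta^+\in C^m(K\times(-1,-1+\delta))$ and the estimate (\ref{eq2_7_00}).

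First I would record the needed facts about $\tau$. When $K\subset J\setminus\{c_1=-1\}$, compactness keeps $c_1$ bounded away from $-1$, so $\tau=2+2\sqrt{1+c_1}$ is $C^\infty$ in $c_1$ with all derivatives bounded on $K$; when $K\subset J\cap\{c_1=-1\}$ we differentiate only in $(c_2,c_3)$ and $\tau\equiv 2$, so $\partial_c^\beta\tau=0$ for $|\beta|\ge 1$. In either case $\tau\ge 2$ on $K$, hence every denominator in the recurrence satisfies $2n-2+\tau\ge 2n>0$, uniformly in $c\in K$ and $n\ge 1$. It follows that each $a_n(c)$ is smooth in $c$ on $K$, since it is obtained from $a_1,\dots,a_{n-1}$, the $c_i$, and $\tau$ by multiplication, addition, and division by the nowhere-vanishing factor $2n-2+\tau$, and $\tau$ is smooth; the base cases $a_1=\tilde c_2/\tau-2$ and $a_2=(\tilde c_3-a_1-\tfrac12 a_1^2)/(\tau+2)$ are manifestly smooth with bounded $c$-derivatives on $K$.

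The main work, and the main obstacle, is the uniform geometric bound on $\partial_c^\alpha a_n$. I would prove it by an outer induction on $n$ with a nested induction on $|\alpha|$. Applying $\partial_c^\alpha$ to the recurrence written as $(2n-2+\tau)a_n=-\tfrac12\sum_{k+l=n}a_k a_l-(3-n)a_{n-1}$, and using the Leibniz rule on the product $(2n-2+\tau)a_n$ and on the convolution, gives
\[
(2n-2+\tau)\,\partial_c^\alpha a_n=-\sum_{0\neq\beta\le\alpha}\binom{\alpha}{\beta}(\partial_c^\beta\tau)(\partial_c^{\alpha-\beta}a_n)-\frac12\sum_{k+l=n}\sum_{\beta\le\alpha}\binom{\alpha}{\beta}(\partial_c^\beta a_k)(\partial_c^{\alpha-\beta}a_l)-(3-n)\,\partial_c^\alpha a_{n-1}.
\]
The induction is well founded: the first sum involves $\partial_c^{\alpha-\beta}a_n$ at the same $n$ but strictly lower derivative order (handled by the inner induction, and absent entirely in the slice $c_1=-1$ where $\partial_c^\beta\tau=0$), while the convolution and the last term involve only $a_k,a_l,a_{n-1}$ with indices $<n$. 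Dividing by $2n-2+\tau\ge 2n$ and inserting the inductive bounds, the convolution reproduces exactly the self-improving structure exploited in the proof of the Claim in Lemma \ref{lem:c1c2:LocalSeriesSol}; the delicate point is to choose $b$ large enough, in terms of $m$ and the bounds for $\tau$ and the $c_i$ on $K$, to absorb simultaneously the finitely many small-$n$ terms and the Leibniz combinatorial factors $\binom{\alpha}{\beta}$ while preserving geometric growth in $n$.

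Finally, with $|\partial_c^\alpha a_n(c)|\le C(m,K)\,b^n$ in hand for all $|\alpha|\le m$ and $c\in K$, I fix $\delta\in(0,1/(2b))$. For $x\in(-1,-1+\delta)$ the series $\sum_n\partial_c^\alpha a_n(c)(1+x)^n$ then converges absolutely and uniformly on $K\times(-1,-1+\delta)$, and the same holds after any number of $x$-differentiations, since $\partial_x^j$ only multiplies the $n$-th term by a factor polynomial in $n$, which is harmless against the geometric smallness of $(1+x)^n\le\delta^n$. Hence all mixed $(c,x)$-derivatives of $U_\theta^+$ up to order $m$ exist and are continuous on $K\times(-1,-1+\delta)$, giving $U_\theta^+\in C^m(K\times(-1,-1+\delta))$; taking $x$-derivative of order zero yields (\ref{eq2_7_00}) with $C\le\sum_n C(m,K)(b\delta)^n<\infty$.
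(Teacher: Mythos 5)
Your overall strategy is the same as the paper's: differentiate the power series of Lemma \ref{lem:c1c2:LocalSeriesSol} term by term in $c$, prove smoothness of the coefficients $a_n(c)$ on $K$ (splitting into the two cases $K\subset J\setminus\{c_1=-1\}$, where $\tau=2+2\sqrt{1+c_1}$ is smooth with $2n-2+\tau$ bounded below, and $K\subset J\cap\{c_1=-1\}$, where $\tau\equiv 2$), and close the argument with a uniform geometric bound on $\partial_c^\alpha a_n$ obtained by induction on the recurrence. However, there is a genuine gap in the key inductive estimate. You propose the bound $|\partial_c^\alpha a_n(c)|\le C(m,K)\,b^n$ with a single base $b$ for all $|\alpha|\le m$, and claim that taking $b$ large absorbs the Leibniz factors. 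It does not: the recurrence (\ref{eq2_gamma_0}) is quadratic, so the convolution term in your displayed identity contributes, after dividing by $2n-2+\tau\ge 2n$, roughly
\[
\frac{1}{4n}\sum_{k+l=n}\sum_{\beta\le\alpha}\binom{\alpha}{\beta}\,b^k\,b^l \;\approx\; 2^{|\alpha|-2}\,\frac{n-1}{n}\,b^n,
\]
i.e.\ a constant factor $\sim 2^{|\alpha|-2}$ (plus the constant coming from $\partial_c^\beta\bigl((2n-2+\tau)^{-1}\bigr)$) sitting \emph{in front of} $b^n$. Enlarging $b$ rescales both sides of the inductive step identically and cannot remove a multiplicative constant exceeding $1$; so for $m\ge 3$ the induction in the form you state does not close. (For $|\alpha|=0$ it closes only because the margin $\frac{n-1}{2n}\cdot\frac{1}{2n-2+\tau}\sum_{k+l=n}$ gives the factor $\frac13<1$ with nothing else to absorb.)

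The paper's resolution is to make the base of the geometric bound depend on the derivative order: the Claim in the proof of Lemma \ref{lem2_gamma_1} asserts $|\partial_c^\alpha a_n|\le a^{n(|\alpha|+1)}$, i.e.\ base $a^{|\alpha|+1}$ at order $|\alpha|$. The point is the superadditivity estimate $k(|\alpha_2|+1)+l(|\alpha_1-\alpha_2|+1)\le n(|\alpha|+1)-|\alpha|$ for $k+l=n$, $k,l\ge1$, $\alpha_2\le\alpha_1\le\alpha$, which makes each product of two inductive bounds gain a factor $a^{-|\alpha|}\le a^{-1}$; choosing $a\ge C(m,K)$ then absorbs all combinatorial and $\tau$-derivative constants, and the induction closes (see (\ref{eq2_7_13})--(\ref{eq2_7_14})). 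Your bound is a consequence of the paper's (take $b=a^{m+1}$), and your remaining steps (term-by-term differentiation in $c$ and $x$ on $(-1,-1+\delta)$ with $\delta<1/(2b)$) are fine once the graded bound is in place, but as written the central estimate is not established.
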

\begin{proof}
Let $\alpha=(\alpha^1,\alpha^2,\alpha^3)$ denote a multi-index where $\alpha^i\ge 0$, $i=1,2,3$. The partial derivative $\partial^{\alpha}=\partial^{\alpha_1}_{c_1}\partial^{\alpha_2}_{c_2}\partial^{\alpha_3}_{c_3}$ and the absolute value $|\alpha|=\sum_{i=1}^{3}\alpha^i$.

By Lemma \ref{lem:c1c2:LocalSeriesSol} and its proof,  there exists $\delta>0$, depending only on $K$, such that for  $c\in K$, $U_{\theta}^+(c)$ can be expressed as 
\[
   U_{\theta}^+(c)(x)=\tau+\sum_{n=1}^{\infty}a_n(1+x)^n, -1<x<-1+\delta,
\]
where
\begin{equation*}
   \tau=\tau_2(c_1)=2+2\sqrt{1+c_1},
\end{equation*}
\begin{equation}\label{eq2_7_6}
   a_1=\frac{-c_1+c_2+2c_3}{\tau}-2,\quad a_2=-\frac{1}{\tau+2}(c_3+a_1+\frac{1}{2}a_1^2),
\end{equation}
\begin{equation}\label{eq2_gamma_0}
   a_n=-\frac{1}{2n-2+\tau}\left(\frac{1}{2}\sum_{k+l=n,k,l\ge 1}a_ka_l+(3-n)a_{n-1}\right),\quad n\ge 3,
\end{equation}
and 
\begin{equation}\label{eq2_7_6b}
  |a_n|\le \left(\frac{1}{2\delta}\right)^n.
\end{equation}
Estimate (\ref{eq2_7_6b}) guarantees that the power series expansion of $U^+_{\theta}(c)(x)$ is uniformly convergent in $(-1,-1+\delta)$.

By the above expressions and relations it can be seen that $\tau(c)$ and $a_n(c)$ are all $C^{\infty}$ functions of $c$ in $J$. So to prove the lemma, we just need to show that there exists some $\delta'>0$, depending only on $m$ and $K$, such that for any multi-index $\alpha$ satisfying $1\le |\alpha|\le m$, the series
\begin{equation}\label{eq2_7_7}
     \frac{\partial^{\alpha}\tau}{\partial c^{\alpha}}+\sum_{n=1}^{\infty}\frac{\partial^{\alpha} a_n}{\partial c^{\alpha}}(1+x)^n
\end{equation}
is absolutely convergent in $(-1,-1+\delta')$ uniformly for $c\in K$.
\bigskip

\textbf{Case 1}: $K\subset J\setminus\{c\mid  c_1=-1\}$.

Let $C(m,K)$ be a constant depending only on $m$ and $K$ which may vary from line to line. If $K$ is a compact set in $J\setminus\{c\mid  c_1=-1\}$, there exists some constant $\delta_1(K)>0$,  such that $4+4c_1\ge \delta_1(K)$.  Using this, (\ref{eq2_7_6}), (\ref{eq2_gamma_0}), and the fact that $\tau>2$, we have

\begin{equation}\label{eq2_7_1}
  \left|\frac{\partial^{\alpha} \tau}{\partial c^{\alpha}}\right|\le C(m,K), \quad \left|\frac{\partial^{\alpha} a_n}{\partial c^{\alpha}}\right|\le C(m,K), \quad \forall 1\le n\le 2, 0\le |\alpha|\le m, c\in K.
\end{equation}

Next, let $g_n(c):=\frac{1}{2n-2+\tau}$. By the above estimates and the fact that $\tau>2$, we have 
\begin{equation}\label{eq2_7_2}
   \left|\frac{\partial^{\alpha}}{\partial c^{\alpha}}g_n(c)\right|\le \frac{C(m,K)}{n},  \quad \textrm{ for all }1\le |\alpha|\le m,  c\in K, \textrm{ and }n \ge 1.
\end{equation}

To prove the existence of $\delta'$ such that the series in (\ref{eq2_7_7}) is convergent for all $1\le |\alpha|\le m$ uniformly in $K$, we will only need to show the following:

\medskip

\noindent {\it Claim}: there exists some $a>0$, depending only on $m$ and $K$, such that
\[
   (P_n): \quad |\partial^{\alpha} a_n(c)|\le a^{n(|\alpha|+1)}, \quad \textrm{ for }1\le |\alpha|\le m, \textrm{ and } c\in K
\]
holds for all $n\ge 1$.

\noindent {\it Proof of Claim}:  We prove it by induction on $n$. Let  $a$ be a constant to be determined in the proof. 

By  estimate (\ref{eq2_7_1}), there exists some constant $\bar{a}$, depending only on $m$ and $K$, such that for all $a\ge \bar{a}$, $(P_1)$ and $(P_2)$ hold. We may assume that $\bar{a}\ge \frac{1}{2\delta}$ so that we know from (\ref{eq2_7_6b}) that 
\begin{equation}\label{eq2_7_3a}
   |a_n(c)|\le \bar{a}^n,
   \end{equation}
    for all $c\in K$ and $n\ge 1$. 

Now for $n\ge 3$,  suppose that for some $a\ge \bar{a}$, $(P_k)$ holds for all $1\le k\le n-1$.
 
Let $Q_n(c):=\sum_{k+l=n,k,l\ge 1}a_ka_l$. Then (\ref{eq2_gamma_0}) can be written as 
\[
    a_n=-\frac{1}{2}g_nQ_n+(n-3)g_na_{n-1}.
\]
So
\begin{equation}\label{eq2_7_3}
   \partial^{\alpha}a_n=-\frac{1}{2} \partial^{\alpha}(g_nQ_n)+(n-3) \partial^{\alpha}(g_na_{n-1}).
\end{equation}
Using (\ref{eq2_7_2}), by computation we have
\[
    | \partial^{\alpha}(g_nQ_n)|
	 \le \frac{C(m,K)}{n}\max_{\alpha_1\le \alpha}|\partial^{\alpha_1}Q_n|.
		\]

Let $a\ge \bar{a}$, using the definition of $Q_n(c)$, by induction we have that,
\begin{equation}\label{eq2_7_13}
\begin{split}
   | \partial^{\alpha}(g_nQ_n)|
	 &\le \frac{C(m,K)}{n}\max_{\alpha_1\le \alpha}\sum_{k+l=n,k,l\ge 1}\max_{\alpha_2\le \alpha_1}|\partial^{\alpha_2}a_k||\partial^{\alpha_1-\alpha_2}a_l|\\
    & \le C(m,K)\max_{\alpha_1\le \alpha}\max_{\alpha_2\le \alpha_1}\max_{k+l=n,k,l\ge 1}a^{k(|\alpha_2|+1)}a^{l(|\alpha_1-\alpha_2|+1)}\\
    & \le C(m,K)a^{n(|\alpha|+1)-|\alpha|}.
   \end{split}
\end{equation}

Similarly, by (\ref{eq2_7_2}), (\ref{eq2_7_3a}) and the induction hypothesis, we have
\begin{equation}\label{eq2_7_14}
    | \partial^{\alpha}(g_na_{n-1})|  \le \frac{C(m,K)}{n}a^{(n-1)(|\alpha|+1)}.
\end{equation}

Plug (\ref{eq2_7_13}) and (\ref{eq2_7_14}) in (\ref{eq2_7_3}), we have that for $|\alpha|\ge 1$, 
\[
     |\partial^{\alpha} a_n|  \le C(m,K)a^{n(|\alpha|+1)-1}.
\]

If from the beginning we use $a=\max\{\bar{a},C(m,K)\}$ for the induction hypothesis, we have
\[
      |\partial^{\alpha} a_n|  \le a^{n(|\alpha|+1)}.
\]

  So the claim is true for all $n$. The lemma is proved for $K\subset J\setminus\{c\mid c_1=-1\}$.
  \bigskip
  
  \textbf{Case 2}:  $K\subset J\cap\{c\mid  c_1=-1\}$. 
  
  In this case $\tau=2$ and $g_n(c)$ is a constant in $K$. By similar arguments as in Case 1, we have the same estimate for $a_n$ and the proof is finished.  
%
 \end{proof}

	\begin{cor}\label{cor2_7}
   For any  $K\subset J\setminus\{c\mid c_1=-1\}$ or $J\cap\{c \mid c_1=-1\}$, $U_{\theta}^+(c)\in C^{\infty}(K\times (-1,1))$. Moreover, for any $\epsilon>0$, $m\in \mathbb{N}$, there exists some positive constant $C$, depending only on $m$, $K$, and $\epsilon$,  such that
   \begin{equation}\label{eqcor2_7_1}
      ||\partial^{\alpha}_cU_{\theta}^+(c)||_{L^{\infty}(-1, 1-\epsilon)}\le C(m,K,\epsilon), \quad  0\le |\alpha|\le m.
   \end{equation}
\end{cor}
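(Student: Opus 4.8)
The plan is to combine the near-endpoint control from Lemma \ref{lem2_gamma_1} with the standard smooth dependence of ODE solutions on parameters on the region where (\ref{eq:UthP}) is non-degenerate. Fix a compact $K$ in $J\setminus\{c\mid c_1=-1\}$ or in $J\cap\{c\mid c_1=-1\}$, an integer $m\ge 0$, and $\epsilon\in(0,1)$. Lemma \ref{lem2_gamma_1} provides $\delta=\delta(m,K)>0$ and $C(m,K)$ with $U_\theta^+(c)\in C^m(K\times(-1,-1+\delta))$ and $|\partial_c^\alpha U_\theta^+(c)(x)|\le C(m,K)$ there for $|\alpha|\le m$. On the complementary compact interval $[-1+\delta/2,\,1-\epsilon]$ the factor $1-x^2$ is bounded below by a positive constant depending only on $\delta$ and $\epsilon$, so (\ref{eq:UthP}) can be solved for $U_\theta'$ with right-hand side $(1-x^2)^{-1}(P_c(x)-2xU_\theta-\tfrac12U_\theta^2)$, which is smooth in $(x,U_\theta,c)$; moreover by Lemma \ref{lem2_0_0} the solution $U_\theta^+(c)$ stays in a fixed bounded set for $c\in K$, so its graph lies in a compact subset of the domain of this vector field.

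On $[-1+\delta/2,1-\epsilon]$ I would view $U_\theta^+(c)$ as the solution of the initial value problem with datum $U_\theta^+(c)(-1+\delta/2)$, which is $C^\infty$ in $c$ by Lemma \ref{lem2_gamma_1}; the standard theorem on smooth dependence on initial data and parameters then gives joint smoothness of $U_\theta^+(c)(x)$ on $K\times[-1+\delta/2,1-\epsilon]$, and patching with Lemma \ref{lem2_gamma_1} while letting $\epsilon\downarrow0$ (with $m$ increasing, so that the regular region exhausts $(-1,1)$) yields $U_\theta^+\in C^\infty(K\times(-1,1))$. For the bound I would introduce $v_\alpha:=\partial_c^\alpha U_\theta^+(c)$ and differentiate (\ref{eq:UthP}) $\alpha$ times, using $\partial_c^\alpha(U_\theta^2)=\sum_{\beta\le\alpha}\binom{\alpha}{\beta}v_\beta v_{\alpha-\beta}$ and isolating the extreme terms $\beta=0,\alpha$, to obtain the linear first-order equation
\[
   (1-x^2)v_\alpha'+(2x+U_\theta^+)v_\alpha=\partial_c^\alpha P_c-\tfrac12\!\!\sum_{0<\beta<\alpha}\!\binom{\alpha}{\beta}v_\beta v_{\alpha-\beta},
\]
whose inhomogeneity depends only on lower-order derivatives $v_\beta$ with $|\beta|<|\alpha|$ and on the (polynomial) $c$-derivatives of $P_c$. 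On $[-1+\delta/2,1-\epsilon]$ the coefficient $(2x+U_\theta^+)/(1-x^2)$ is bounded by a constant depending only on $K,\delta,\epsilon$, so both the associated integrating factor and its reciprocal are uniformly bounded. Inducting on $|\alpha|$ (base case $|\alpha|=0$ being Lemma \ref{lem2_0_0}), with initial datum $|v_\alpha(-1+\delta/2)|\le C(m,K)$ from Lemma \ref{lem2_gamma_1}, an integrating-factor/Gronwall estimate bounds $v_\alpha$ on $[-1+\delta/2,1-\epsilon]$ by $C(m,K,\epsilon)$; combined with the bound of Lemma \ref{lem2_gamma_1} on $(-1,-1+\delta)$ this gives (\ref{eqcor2_7_1}).

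Since Lemma \ref{lem2_gamma_1} has already absorbed the genuine difficulty at the singular endpoint $x=-1$, no serious obstacle remains; the main points requiring care are the uniformity of all constants in $c\in K$ (ensured by the uniform lower bound on $1-x^2$, the uniform bound on $U_\theta^+$ from Lemma \ref{lem2_0_0}, and the uniform initial bounds from Lemma \ref{lem2_gamma_1}), and organizing the induction so that the level-$|\alpha|$ inhomogeneity is controlled purely by the already-established lower-order estimates. The Case~2 situation $c_1=-1$ is handled identically, with $\partial_c^\alpha$ read as $\partial_{(c_2,c_3)}^\alpha$ as in Lemma \ref{lem2_gamma_1}. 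One slightly delicate bookkeeping point is that the radius $\delta$ in Lemma \ref{lem2_gamma_1} may shrink as $m$ grows; this is harmless because the regular region $[-1+\delta(m,K)/2,\,1-\epsilon]$ only enlarges with $m$, so every point of $(-1,1)$ eventually lies in it, yielding $C^\infty$ regularity there.
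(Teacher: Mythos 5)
Your proposal is correct and follows essentially the same route as the paper: both split $(-1,1)$ into the neighborhood $(-1,-1+\delta)$ of the singular endpoint, where Lemma \ref{lem2_gamma_1} gives the uniform $C^m$ bounds, and the regular region $[-1+\delta/2,1-\epsilon]$, where one propagates the data at $x=-1+\delta/2$ by standard smooth dependence of the ODE on initial values and parameters. Your explicit induction on $|\alpha|$ via the differentiated linear equation and integrating factors merely spells out what the paper cites as ``standard ODE theories'' (the paper phrases it through the auxiliary parameter $a$ in (\ref{eq_cor2_7}) and restricts to $a=a_0(c)$, which is an equivalent bookkeeping).
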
	
\begin{proof}
 We know that $U^+_{\theta}(c)$ satisfies (\ref{eq:UthP}) in $(-1,1)$ and $||U^+_{\theta}||_{L^{\infty}(-1,1)}\le C$, where $C$ depends only on $K$.
 By Lemma \ref{lem2_gamma_1}, for any positive integer $m$, there exist some positive constants $\delta$ and $C$, depending only on $m$ and $K$, such that $U_{\theta}^+(c)\in C^m(K \times (-1,-1+\delta))$ and (\ref{eq2_7_00}) holds.
 
 Consider (\ref{eq_cor2_7}) 
 for $a$ close to $a_0:=U_{\theta}^+(c)(-1+\frac{\delta}{2})$. 
 By standard ODE theories, for any $0<\epsilon<2-\delta$, there exist some positive constants $\mu$ and $C$, depending on $m$, $K$ and $\epsilon$, such that if $|a-a_0|<\mu$, then there exists a solution $U_{\theta}\in C^m((a_0-\mu,a_0+\mu)\times K\times [-1+\frac{\delta}{4}, 1-\epsilon])$ of (\ref{eq_cor2_7}), and 
 \[
     |\partial_{a}^{\beta}\partial_c^{\alpha}U_{\theta}|\le C, \quad  |\beta|, |\alpha|\le m, c\in K, -1+\frac{\delta}{4}<x<1-\epsilon.
 \]
 It follows,   also in view of (\ref{eq2_7_00}), that $U^+_{\theta}(c)=U_{\theta}|_{a=a_0}$ satisfies (\ref{eqcor2_7_1}).
 \end{proof}
 


Similarly to Lemma \ref{lem2_gamma_1} and Corollary \ref{cor2_7} we have
	\addtocounter{lem}{-1}
\renewcommand{\thelem}{\thesection.\arabic{lem}'}%
\begin{lem}\label{lem2_gamma_1'}
      For any integer $m\ge 0$,  and any compact set $K$ contained in either $J\setminus\{c\in J\mid c_2=-1\}$ or $J\cap\{c\in J\mid c_2=-1\}$, there exist some positive constants $\delta$ and $C$, depending only on $m$ and $K$, such that $U_{\theta}^-(c)\in C^m(K\times (1-\delta, 1))$, and
    \[
        |\partial_c^{\alpha}U_{\theta}^-(c)(x)|\le C, \quad  x\in (1-\delta,1), c\in K,  |\alpha|\le m.
    \]

\end{lem}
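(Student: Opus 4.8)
The plan is to obtain this primed lemma from Lemma \ref{lem2_gamma_1} by the same reflection symmetry the paper already uses to pass between unprimed and primed local results. For a solution $U_\theta$ of (\ref{eq:UthP}) with parameter $c=(c_1,c_2,c_3)$, set $\tilde U_\theta(x):=-U_\theta(-x)$. A direct substitution, exactly as in the proof of Lemma \ref{lem:pre1'}, shows that $\tilde U_\theta$ solves (\ref{eq:UthP}) with the parameter $\tilde c:=(c_2,c_1,c_3)$, since $P_c(-x)=P_{\tilde c}(x)$. Because $x\mapsto -x$ interchanges the two endpoints and the sign flip reverses the ordering of solutions, the largest solution for $\tilde c$ corresponds to the smallest solution for $c$; that is, $U_\theta^-(c)(x)=-U_\theta^+(\tilde c)(-x)$, with the boundary correspondence $U_\theta^-(c)(1)=\tau_1'(c_2)=-\tau_2(\tilde c_1)=-U_\theta^+(\tilde c)(-1)$.

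First I would record this identity carefully, checking in particular that the upper power-series solution of Lemma \ref{lem:c1c2:LocalSeriesSol} at $x=-1$ with $\tilde\tau=\tau_2(\tilde c_1)$ maps precisely onto the lower power-series solution of Lemma \ref{lem:c1c2:LocalSeriesSol'} at $x=1$ with $\tau'=\tau_1'(c_2)$, as arranged in Lemma \ref{lem:c1c2:>c3bar}. Differentiating $U_\theta^-(c)(x)=-U_\theta^+(\tilde c)(-x)$ in $c$ gives $\partial_c^\alpha U_\theta^-(c)(x)=-\partial_{\tilde c}^{\tilde\alpha}U_\theta^+(\tilde c)(-x)$, where $\tilde\alpha$ is $\alpha$ with its first two components interchanged, and the substitution $x\mapsto -x$ sends $(1-\delta,1)$ onto $(-1,-1+\delta)$. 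Hence the desired bound and the $C^m$ regularity on $K\times(1-\delta,1)$ follow at once from Lemma \ref{lem2_gamma_1} applied on $\tilde K\times(-1,-1+\delta)$, where $\tilde K$ is the image of $K$ under $(c_1,c_2,c_3)\mapsto(c_2,c_1,c_3)$.

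The only point needing attention is the bookkeeping of the two alternative hypotheses on $K$. The swap carries $\{c\mid c_2=-1\}$ to $\{c\mid \tilde c_1=-1\}$, so a compact $K\subset J\setminus\{c\mid c_2=-1\}$ maps to a compact $\tilde K\subset J\setminus\{c\mid c_1=-1\}$ (Case 1 of Lemma \ref{lem2_gamma_1}), while a compact $K\subset J\cap\{c\mid c_2=-1\}$ maps to $\tilde K\subset J\cap\{c\mid c_1=-1\}$ (Case 2, where $\tilde\tau=2$ is constant, matching $\tau'=-2$). Thus both hypotheses transfer correctly, and the constants $\delta,C$ depending only on $m$ and $\tilde K$ depend only on $m$ and $K$.

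Alternatively, with no new ideas, one may mirror the proof of Lemma \ref{lem2_gamma_1} verbatim using the expansion $U_\theta^-(c)(x)=\tau'+\sum_{n\ge1}a_n(1-x)^n$ from Lemma \ref{lem:c1c2:LocalSeriesSol'}, with $\tau'=\tau_1'(c_2)=-2-2\sqrt{1+c_2}\le -2$. The recurrence then has denominator $\tau'-2(n-1)$, and since $\tau'\le -2$ one has $|\tau'-2(n-1)|=2\sqrt{1+c_2}+2n\ge 2n$, which yields the same bound $|\partial_c^\alpha g_n|\le C(m,K)/n$ that drove the induction there; the cases $c_2>-1$ (uniform lower bound on $4+4c_2$) and $c_2=-1$ ($\tau'\equiv -2$, $g_n$ constant) play the roles of $c_1>-1$ and $c_1=-1$, and the inductive estimate $(P_n)$ passes through unchanged. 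I expect the main, and essentially only, obstacle to be the careful verification of the reflection identity together with the correspondence of hypotheses just described; there is no genuinely new analytic difficulty beyond what was already settled in Lemma \ref{lem2_gamma_1}.
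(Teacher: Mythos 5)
Your proposal is correct and matches the paper's intent: the paper gives no separate argument for this lemma, stating only that it follows ``similarly'' to Lemma \ref{lem2_gamma_1}, and both of your routes --- the reflection $\tilde U_\theta(x)=-U_\theta(-x)$ sending $c=(c_1,c_2,c_3)$ to $(c_2,c_1,c_3)$ (the same device the paper uses to deduce Lemma \ref{lem:pre1'} from Lemma \ref{lem:pre1}), and the verbatim mirroring of the power-series argument at $x=1$ with $\tau'=\tau_1'(c_2)\le -2$ --- are exactly what that ``similarly'' means. The bookkeeping you check (the swap of the two cases $c_2>-1$ and $c_2=-1$, and the lower bound $|\tau'-2(n-1)|\ge 2n$) is the right thing to verify and is correct.
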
	
\renewcommand{\thelem}{\thesection.\arabic{lem}}%

\addtocounter{cor}{-1}
\renewcommand{\thecor}{\thesection.\arabic{cor}'}%

\begin{cor}\label{cor2_7'}
   For any  $K\subset J\setminus\{c\in J\mid c_2=-1\}$ or $J\cap\{c\in J\mid c_2=-1\}$, $U_{\theta}^-(c)\in C^{\infty}(K\times (-1,1))$. Moreover, for any $\epsilon>0$, $m\in \mathbb{N}$, there exists some positive constant $C$, depending only on $m$, $K$, and $\epsilon$,  such that
   \begin{equation*}
      ||\partial^{\alpha}_cU_{\theta}^-(c)||_{L^{\infty}(-1+\epsilon,1)}\le C, \quad  0\le |\alpha|\le m.
   \end{equation*}
\end{cor}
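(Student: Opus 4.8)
The plan is to mirror the proof of Corollary \ref{cor2_7} with the roles of the endpoints $x=-1$ and $x=1$ interchanged; equivalently, one may apply Corollary \ref{cor2_7} directly to the reflected function $\tilde U_\theta(x):=-U_\theta(-x)$, which solves (\ref{eq:UthP}) with parameters $(c_2,c_1,c_3)$ and carries $U^-_\theta(c)$ near $x=1$ to $U^+_\theta(c_2,c_1,c_3)$ near $x=-1$, so that the hypothesis $c_2=-1$ is turned into $c_1=-1$. I will instead repeat the argument directly near $x=1$, since all the ingredients are already available: the uniform bound $\|U^-_\theta\|_{L^\infty(-1,1)}\le C(K)$ from Lemma \ref{lem2_0_0}, the local $C^m$ estimates of $U^-_\theta(c)$ near $x=1$ supplied by Lemma \ref{lem2_gamma_1'}, and the smooth dependence of ODE solutions on initial data and parameters.

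First I fix $m$ and $\epsilon>0$. By Lemma \ref{lem2_gamma_1'} there is $\delta>0$, depending only on $m$ and $K$, so that $U^-_\theta(c)\in C^m\bigl(K\times(1-\delta,1)\bigr)$ with $|\partial_c^\alpha U^-_\theta(c)(x)|\le C(m,K)$ for $|\alpha|\le m$ and $1-\delta<x<1$; this already yields the asserted bound $\|\partial_c^\alpha U^-_\theta(c)\|_{L^\infty}\le C$ on the slab $(1-\delta,1)$, and it shows that $c\mapsto a_0(c):=U^-_\theta(c)(1-\tfrac{\delta}{2})$ is a $C^m$ function of $c\in K$. Next I consider the initial value problem for (\ref{eq:UthP}) with data prescribed at $x=1-\tfrac{\delta}{2}$ and initial value $a$ near $a_0$, exactly as in (\ref{eq_cor2_7}) but seeded from the right endpoint. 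Because $U^-_\theta(c)$ is already a genuine solution on all of $(-1,1)$ and is bounded there by Lemma \ref{lem2_0_0}, it persists on the compact interval $[-1+\epsilon,\,1-\tfrac{\delta}{2}]$ without blow-up; hence for $\mu>0$ small the solution $U_\theta(a,c,x)$ of this problem exists and is $C^m$ on $(a_0-\mu,a_0+\mu)\times K\times[-1+\epsilon,\,1-\tfrac{\delta}{4}]$, with all mixed derivatives $\partial_a^\beta\partial_c^\alpha U_\theta$ bounded by a constant $C(m,K,\epsilon)$, by standard smooth dependence of ODEs on initial conditions and parameters. Setting $a=a_0(c)$ and applying the chain rule, using $a_0\in C^m(K)$, gives $U^-_\theta(c)=U_\theta(a_0(c),c,\cdot)\in C^m\bigl(K\times[-1+\epsilon,\,1-\tfrac{\delta}{4}]\bigr)$ together with the corresponding bound on this interval. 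Combining with the slab estimate near $x=1$ covers $(-1+\epsilon,1)$, and letting $m$ be arbitrary gives $U^-_\theta(c)\in C^\infty\bigl(K\times(-1,1)\bigr)$.

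The only point requiring care is the junction of the two regimes: near $x=1$ the solution is a power series whose $c$-derivatives are controlled by Lemma \ref{lem2_gamma_1'}, while in the interior I rely on smooth dependence for the ODE. These are reconciled through the $C^m$ seed value $a_0(c)$, so no singular behaviour at $x=1$ leaks into the interior estimate. I expect the main, though routine, obstacle to be verifying that solutions with initial data $a$ near $a_0$ persist on the full compact interval $[-1+\epsilon,\,1-\tfrac{\delta}{4}]$ uniformly in $c\in K$; this follows from the a priori bound of Lemma \ref{lem2_0_0} applied to $U^-_\theta(c)$ together with continuous dependence, exactly as in the proof of Corollary \ref{cor2_7}. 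Note that staying inside $(-1+\epsilon,1)$ keeps us away from the endpoint $x=-1$, so the possible degeneracy there (when $c_1=-1$) plays no role in this estimate.
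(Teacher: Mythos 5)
Your proposal is correct and follows essentially the same route as the paper, which states Corollary \ref{cor2_7'} as the mirror image of Corollary \ref{cor2_7} (local $C^m$ control near $x=1$ from Lemma \ref{lem2_gamma_1'}, then propagation into the interior via smooth dependence of the ODE on the seed value $a_0(c)$ and parameters, using the a priori bound of Lemma \ref{lem2_0_0}). The reflection $\tilde U_\theta(x)=-U_\theta(-x)$ you mention is exactly the symmetry the paper invokes implicitly when it writes ``Similarly \dots we have.''
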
	
\renewcommand{\thecor}{\thesection.\arabic{cor}}%


Theorem \ref{thm:c1c2} can be obtained from Corollary \ref{cor2_7} and Corollary \ref{cor2_7'}.



\bigskip

To prove Theorem \ref{propA_1}, we make the following observations.


By Corollary \ref{cor2_7} and Corollary \ref{cor2_7'}, we know that for $1\le k\le 4$ and $l=2,3$, $U_{\theta}^+(c)$ and $U_{\theta}^-(c)$ are smooth in $I_{k,l}$. Here the smoothness means $U_{\theta}^+(c)$ and $U_{\theta}^-(c)$ are smooth restricted to each $I_{k,l}$.


 By standard ODE theory, since $U_{\theta}$ satisfies (\ref{eq:UthP}), it  is smooth in $I_{k,1}$ for each $1\le k\le 4$.  So  a solution $U_{\theta}$ of the initial value problem
 \begin{equation}\label{eq_prob}
   \left\{\begin{split}
      & (1-x^2)U_{\theta}'+2xU_{\theta}+\frac{1}{2}U_{\theta}^2=P_c(x), \quad -1<x<1,\\ 
      & U_{\theta}(0)=\gamma,
   \end{split}
   \right.
 \end{equation}
is smooth with respect to $(c,\gamma)$ in each $I_{k,l}$, $1\le k\le 4$, $1\le l\le 3$. It remains to prove the estimates (i)-(iv) in Theorem \ref{propA_1}. 


 We first make some estimates about the solutions $U_{\theta}$ of (\ref{eq_prob}). 

Recall that for each $(c,\gamma)\in I$, there is  a solution $U_{\theta}=U_{\theta}^{c,\gamma}$ satisfying (\ref{eq_prob}).

\begin{lem}\label{lemAp_1}
  Let $K$ be a compact subset of  $I\setminus\{(c,\gamma)\mid \gamma=\gamma^+(c)\}$. Then for any $\epsilon>0$, there exists some $\delta>0$, depending only on $\epsilon$ and $K$, such that for any $(c,\gamma)\in K$, 
  \[
     |U_{\theta}^{c,\gamma}(x)-U_{\theta}^{c,\gamma}(-1)|<\epsilon, \quad  -1<x<-1+\delta.
  \]
\end{lem}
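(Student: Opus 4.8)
The plan is to reduce the statement to a uniform two-sided estimate for $g:=U_\theta^{c,\gamma}-\tau_1(c_1)$ near $x=-1$, and to prove it by a maximum-principle argument tailored to this branch. First I would record the reductions that make everything uniform over $K$. Since $K$ is compact and avoids $\{\gamma=\gamma^+(c)\}$ while $\gamma\le\gamma^+(c)$ on $I$, continuity of $\gamma^\pm$ and of $\bar c_3$ gives uniform gaps $\gamma^+(c)-\gamma\ge\eta>0$ and, because $c_3=\bar c_3(c_1,c_2)$ forces $\gamma=\gamma^+$ (Theorem~\ref{thm1_1}), also $c_3-\bar c_3(c_1,c_2)\ge\eta'>0$ on $K$. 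By Theorem~\ref{thm1_3}(ii) this pins the boundary value: $U_\theta^{c,\gamma}(-1)=\tau_1(c_1)$ for every $(c,\gamma)\in K$. Lemma~\ref{lem2_0_0} gives a uniform bound $|U_\theta^{c,\gamma}|\le C_0$ on $(-1,1)$, so $|g|\le 2C_0$, and the claim becomes the uniform vanishing of $g$ as $x\to-1^+$.

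Next I would substitute $U_\theta=\tau_1(c_1)+g$ into (\ref{eq:UthP}) and use $-2\tau_1+\tfrac12\tau_1^2=2c_1$ to obtain the Riccati equation $(1-x^2)g'+(2x+\tau_1)g+\tfrac12 g^2=-R(x)$, with $R(x)=O(1+x)$ uniformly on $K$. The crucial structural point is that $2x+\tau_1\to\tau_1(c_1)-2=-2\sqrt{1+c_1}\le0$ at $x=-1$, so $x=-1$ is an attracting singular point for this ($\tau_1$) branch; this is exactly the regime \emph{not} covered by the comparison Lemma~\ref{lem:c1c2:compare}, whose hypotheses require boundary values $\ge 2$. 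I would therefore establish $|g|<\epsilon$ near $-1$ by one-sided barriers combined with the monotone-quantity device $w=e^{\int b}g$ from the proof of Lemma~\ref{lem:c1c2:compare}: for the relevant $b$ the integrating factor tends to $0$ or $\infty$ at $-1$ with a definite rate $(1+x)^{\pm\sqrt{1+c_1}}$, and the boundary behaviour $\lim_{x\to-1}w=0$ then converts into the desired sign of $g$.

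To keep the family manageable I would exploit the ordering and foliation of Lemma~\ref{lem:c1c2:foliate}. The difference of any two solutions of (\ref{eq:UthP}) solves the \emph{linear} homogeneous equation $(1-x^2)(U_\theta-\tilde U_\theta)'+\bigl(2x+\tfrac12(U_\theta+\tilde U_\theta)\bigr)(U_\theta-\tilde U_\theta)=0$, so $0\le U_\theta^{c,\gamma}-U_\theta^-(c)=(U_\theta^{c,\gamma}-U_\theta^-(c))(x_0)\exp(-\int_{x_0}^x\cdots)=O((1+x)^{\sqrt{1+c_1}})$. Together with the uniform gap $\gamma\le\gamma^+(c)-\eta$ and monotonicity this sandwiches the whole four-parameter family between the two extremal solutions $U_\theta^-(c)$ and $U_\theta^{c,\gamma^+(c)-\eta}$, reducing the problem to a lower bound $U_\theta^-(c)\ge\tau_1-\epsilon$ and an upper bound $U_\theta^{c,\gamma^+(c)-\eta}\le\tau_1+\epsilon$ for these two $c$-families.

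The main obstacle is uniformity as $c_1\downarrow-1$. When $c_1>-1$ stays bounded away from $-1$, the factor $(1+x)^{\sqrt{1+c_1}}$ decays uniformly and the barriers close readily; but as $\sqrt{1+c_1}\to0$ the linear attraction degenerates, the constant barriers $\tau_1\pm\epsilon$ cease to be barriers of the correct type (their defect at $-1$ is $\mp 2\sqrt{1+c_1}\,\epsilon+\tfrac12\epsilon^2$), and the convergence $g\to0$ is driven instead by the quadratic term and is only logarithmic — this is the source of the $\big(\ln\tfrac{1+x}{3}\big)^2$ weights appearing later in Theorem~\ref{propA_1}. I would treat the stratum $c_1=-1$ (where $\tau_1=2$) separately, using the $=2$ case of Lemma~\ref{lem:c1c2:compare} and the integrability condition (\ref{eq:c1c2:f1}), and then glue the two regimes by choosing the threshold in $\sqrt{1+c_1}$ as a function of $\epsilon$. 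Carrying out this gluing with $\delta$ depending only on $\epsilon$ and $K$, while controlling the quadratic (Riccati) forcing through a bootstrap in $\sup_{(-1,-1+\delta)}|g|$, is the technical heart of the argument.
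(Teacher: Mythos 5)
Your reductions (uniform gap $\gamma^+(c)-\gamma\ge\eta$, the identification $U_\theta^{c,\gamma}(-1)=\tau_1(c_1)$ via Theorem \ref{thm1_3}(ii), the uniform bound from Lemma \ref{lem2_0_0}, and the Riccati equation for $g=U_\theta^{c,\gamma}-\tau_1$ with $O(1+x)$ forcing) are all correct, and your route is genuinely different from the paper's, which proves the lemma by a compactness--contradiction argument: extract a convergent subsequence $(c^i,\gamma^i)\to(c,\gamma)$, use $C^1_{loc}$ convergence of $U_\theta^i$ in the interior, locate a point $\tilde x_i\to -1$ where $U_\theta^i$ attains a fixed intermediate level $\xi\ne U_\theta(-1)$ with a derivative of definite sign, and contradict the strict monotonicity of $h(s)=-2s+\tfrac12 s^2$ on $(-\infty,2]$ after passing to the limit in (\ref{eq:UthP}). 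That argument needs no barriers and no rate; it only needs the boundary value.

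The gap in your plan is that every mechanism you propose for the quantitative estimate is anchored at $x=-1$ on the $\tau_1$ branch, where no such mechanism can work. Since $2x+U_\theta\to\tau_1(c_1)-2\le 0$, the integrating factor $e^{\int_{-1+\delta}^x b}$ blows up as $x\to-1^+$, so the monotone quantity $w=e^{\int b}g$ from the proof of Lemma \ref{lem:c1c2:compare} satisfies $w'\ge 0$ but $g(-1)=0$ gives no control on $\liminf_{x\to-1^+}w$; this is precisely why that lemma requires boundary values $\ge 2$. More fundamentally, the entire one-parameter family $\{U_\theta^{c,\gamma}\}_{\gamma<\gamma^+(c)}$ shares the boundary value $\tau_1(c_1)$, so no comparison principle propagated from $x=-1$ can separate a solution from a barrier with the same boundary value; your constant barriers $\tau_1\pm\epsilon$ moreover have defect $\pm(\epsilon(\tau_1-2)+\tfrac12\epsilon^2)$ of the wrong sign for small $\epsilon$ when $c_1>-1$. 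The alternative, integrating the Duhamel formula from an interior point $x_0$ toward $-1$ (as the paper does in Lemma \ref{lemAp_3}), requires a priori smallness of $g$ to absorb the quadratic term $\tfrac12 g^2$: with only $|g|\le 2C_0$ from Lemma \ref{lem2_0_0} the quadratic contribution is $O(1)$, not $o(1)$, so the bootstrap you defer to ``the technical heart'' has no uniform starting point over $K$ --- indeed the paper proves Lemma \ref{lemAp_3} \emph{using} Lemma \ref{lemAp_1}, not the other way around. Your sandwich between $U_\theta^-(c)$ and $U_\theta^{c,\gamma^+(c)-\eta}$ suffers the same circularity: estimating the exponent in the linear equation for the difference requires knowing uniformly that both solutions stay below $2-\mu$ near $-1$, which is an instance of the statement being proved. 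To close the argument you would either need explicit non-constant sub/supersolutions valid uniformly on $K$ (not supplied), or you should switch to the compactness argument.
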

\begin{proof}
   We prove it by contradiction. Suppose the contrary,  there exist some $\epsilon>0$ and a sequence $(c^i,\gamma^i)\in K$ and $-1<x_i<-1+\frac{1}{i}$, such that 
   \[
      |U_{\theta}^{c^i,\gamma^i}(x_i)-U_{\theta}^{c^i,\gamma^i}(-1)|\ge \epsilon.
   \]
   Since $K$ is compact, there exist a subsequence, still denoted as $(c^i,\gamma^i)$, and some $(c,\gamma)\in K$, such that $(c^i,\gamma^i)\to (c,\gamma)$ as $i\to \infty$.
   
   Denote $U_{\theta}^i=U_{\theta}^{c^i,\gamma^i}$. By standard ODE theory, we have that $U_{\theta}^i\to U_{\theta}:=U_{\theta}^{c,\gamma}$ in $C^1_{loc}(-1,1)$. 
   We first assume that
   \begin{equation}\label{eqAp_00}
     U_{\theta}^i(x_i)\ge U_{\theta}^i(-1)+\epsilon.
   \end{equation}
   
   Since $(c,\gamma), (c^i,\gamma^i)\in K$, we have $\gamma<\gamma^+(c)$ and $\gamma^i<\gamma^+(c^i)$. Then, by  Theorem \ref{thm1_3} (ii),  $U_{\theta}^i(-1)=2-2\sqrt{1+c_1^i}$ and $U_{\theta}(-1)=2-2\sqrt{1+c_1}$. 
   
   Since $c^i\to c$, we have $U_{\theta}^i(-1)\to U_{\theta}(-1)$, and therefore for sufficiently large $i$, 
     \begin{equation}\label{eqAp_3}
     U_{\theta}^i(x_i)>U_{\theta}(-1)+\frac{\epsilon}{2}.
   \end{equation}
   
   \textbf{Case 1}: $U_{\theta}(-1)<2$.
   
    There exists some $\epsilon_1>0$, such that $U_{\theta}(-1)+3\epsilon_1<\min\{2,U_{\theta}(-1)+\frac{\epsilon}{4}\}$. For sufficiently large $i$ we have $U_{\theta}^i(-1)<2-\epsilon_1$. 
    Since $U_{\theta}^i\to U_{\theta}$ in $C^1_{loc}(-1,1)$, we have 
    \[
        \lim_{i\to \infty}U^i_{\theta}(-1+\frac{1}{j})=U_{\theta}(-1+\frac{1}{j}).
    \]
    By the continuity of $U_{\theta}$,
    \[
       \lim_{j\to \infty}U_{\theta}(-1+\frac{1}{j})=U_{\theta}(-1).
    \]
      
    
    Thus for large $j$, there exists $i_j\ge j$, such that  $-1<x_{i_j}<-1+\frac{1}{j}$ and 
    \[
         U_{\theta}^{i_j}(-1+\frac{1}{j})\le U_{\theta}(-1+\frac{1}{j})+\frac{\epsilon_1}{10}\le U_{\theta}(-1)+2\epsilon_1. 
    \]
    By (\ref{eqAp_3}), $U_{\theta}^{i_j}(x_{i_j})>U_{\theta}(-1)+2\epsilon_1$.
    
    Choose $\tilde{x}_{i_j}\in (x_{i_j},-1+\frac{1}{j})$, satisfying
   \begin{equation*}
        U_{\theta}^{i_j}(\tilde{x}_{i_j})= U_{\theta}(-1)+2\epsilon_1\le 2-\frac{\epsilon_1}{2}, \textrm{ and }(U_{\theta}^{i_j})'(\tilde{x}_{i_j})\le 0.
   \end{equation*}
   
   Plugging $U^{i_j}_{\theta}$ and $\tilde{x}_{i_j}$ in (\ref{eq:UthP}), using the above, we have 
   \begin{equation}\label{eqAp_2}
      2\tilde{x}_{i_j}U_{\theta}^{i_j}(\tilde{x}_{i_j})+\frac{1}{2}(U_{\theta}^{i_j})^2(\tilde{x}_{i_j})\ge P_{c^{i_j}}(\tilde{x}_{i_j}).
   \end{equation}
   Sending $j\to \infty$ in (\ref{eqAp_2}) leads to 
   \[
       h(\xi):=-2\xi+\frac{1}{2}\xi^2\ge P_c(-1),
       \]
       where $\xi:=U_{\theta}(-1)+2\epsilon_1\in (U_{\theta}(-1), 2)$. 
   
   Since $h(s)$ is a decreasing function when $s\le 2$, we have
    \[
      h(\xi)< h(U_{\theta}(-1))=P_c(-1),
   \]
   a contradiction.
      
   \textbf{Case 2}: $U_{\theta}(-1)=2$. 
   
    
    By (\ref{eqAp_3}) and the convergence of $U_{\theta}^i(-1)$ to $U_{\theta}(-1)$, we may choose $\tilde{x}_i\in (-1,x_i)$ satisfying
   \begin{equation*}
     U_{\theta}^i(\tilde{x}_i)= U_{\theta}(-1)+\frac{\epsilon}{4}=2+\frac{\epsilon}{4}, \textrm{ and }(U_{\theta}^i)'(\tilde{x}_i)\ge 0. 
   \end{equation*}
   Plugging $U^i_{\theta}$ and $\tilde{x}_i$ in (\ref{eq:UthP}), using the above, we have 
   \[
       2\tilde{x}_iU^i_{\theta}(\tilde{x}_i)+\frac{1}{2}(U^i_{\theta}(\tilde{x}_i))^2\le P_{c^i}(\tilde{x}_i).
   \]
   Sending $i\to \infty$, the above leads to 
   \[
      h(2)<h(2+\frac{\epsilon}{4})\le P_c(-1)=h(U_{\theta}(-1))=h(2),
   \]
  a contradiction. 
  
  Now, if instead of (\ref{eqAp_00}), 
  \[
     U^i_{\theta}(x_i)\le U_{\theta}^i(-1)-\epsilon,
  \]
  then for sufficiently large $i$, we have
  \[
      U^i_{\theta}(x_i)< U_{\theta}(-1)-\frac{\epsilon}{2}.
  \]
  As in the proof of Case 1, there exists $\tilde{x}_{i_j}\to -1$, such that 
  \[
     U_{\theta}^{i_j}(\tilde{x}_{i_j})=U_{\theta}(-1)-\frac{\epsilon}{2}=:\xi, \textrm{ and }(U_{\theta}^{i_j})'(\tilde{x}_{i_j})\ge 0.
  \]
  Plugging $U^{i_j}_{\theta}$ and $\tilde{x}_{i_j}$ in (\ref{eq:UthP}), using the above, we have 
   \begin{equation*}
      2\tilde{x}_{i_j}U_{\theta}^{i_j}(\tilde{x}_{i_j})+\frac{1}{2}(U_{\theta}^{i_j})^2(\tilde{x}_{i_j})\le P_{c^{i_j}}(\tilde{x}_{i_j}).
   \end{equation*}
   Sending $j\to \infty$ in the above leads to 
   \[
       h(\xi):=-2\xi+\frac{1}{2}\xi^2\le P_c(-1).
       \]

   Since $h(s)$ is a decreasing function when $s\le 2$, we have
    \[
      h(\xi)> h(U_{\theta}(-1))=P_c(-1),
   \]
   a contradiction.
\end{proof}

Similarly we have

\addtocounter{lem}{-1}
\renewcommand{\thelem}{\thesection.\arabic{lem}'}%
	\begin{lem}\label{lemAp_1'}
  Let $K$ be a compact subset of  $I\setminus\{(c,\gamma)\mid \gamma=\gamma^-(c)\}$. Then for any $\epsilon>0$, there exists some $\delta>0$, depending only on $\epsilon$ and $K$, such that for any $(c,\gamma)\in K$, 
  \[
     |U_{\theta}^{c,\gamma}(x)-U_{\theta}^{c,\gamma}(1)|<\epsilon, \quad  1-\delta<x<1.
  \]
\end{lem}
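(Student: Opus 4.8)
The plan is to deduce this lemma from Lemma \ref{lemAp_1} by the reflection $x\mapsto -x$, the same device already used to pass from Lemma \ref{lem:pre1} to Lemma \ref{lem:pre1'}. Given a solution $U_\theta$ of (\ref{eq:UthP}) with parameters $c=(c_1,c_2,c_3)$, I set $\tilde U_\theta(x):=-U_\theta(-x)$. A direct substitution, using $\tilde U_\theta'(x)=U_\theta'(-x)$, shows that the quadratic differential expression $(1-x^2)U'+2xU+\tfrac12 U^2$ evaluated on $\tilde U_\theta$ at $x$ equals the same expression for $U_\theta$ evaluated at $-x$; since also $P_c(-x)=c_1(1+x)+c_2(1-x)+c_3(1-x^2)=P_{\tilde c}(x)$ with $\tilde c:=(c_2,c_1,c_3)$, the reflected function $\tilde U_\theta$ solves (\ref{eq:UthP}) with parameters $\tilde c$. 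Thus the reflection $T$ interchanges the two endpoints $x=\pm1$ and swaps $c_1\leftrightarrow c_2$ while fixing $c_3$.

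Next I would record how $T$ acts on the labelling of solutions. Since $\tilde U_\theta(0)=-U_\theta(0)$ and a solution is, by Theorem \ref{thm1_2}, uniquely determined by its value at $0$, we have $U_\theta^{\tilde c,-\gamma}=-U_\theta^{c,\gamma}(-\cdot)$. Comparing boundary values from Lemma \ref{lem:c1c2:>c3bar} — for instance $\tilde U_\theta(-1)=-U_\theta^+(c)(1)=-\tau_2'(c_2)=\tau_1(c_2)=\tau_1(\tilde c_1)$ — shows that $T$ carries $U_\theta^+(c)$ to $U_\theta^-(\tilde c)$ and $U_\theta^-(c)$ to $U_\theta^+(\tilde c)$; evaluating at $x=0$ gives the relations $\gamma^\pm(\tilde c)=-\gamma^\mp(c)$. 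Using the symmetry of $\bar c_3$ in (\ref{sec2:eq:bar:c3}), one then checks that the linear involution $(c_1,c_2,c_3,\gamma)\mapsto(c_2,c_1,c_3,-\gamma)$ maps $I$ homeomorphically onto itself and carries the excluded component $\{\gamma=\gamma^-(c)\}$ exactly onto $\{\gamma=\gamma^+(c)\}$.

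With these identifications the deduction is immediate. Given a compact $K\subset I\setminus\{\gamma=\gamma^-(c)\}$, its image $\tilde K:=T(K)$ is a compact subset of $I\setminus\{\gamma=\gamma^+(c)\}$, so Lemma \ref{lemAp_1} applies to $\tilde K$: for each $\epsilon>0$ there is $\delta>0$ depending only on $\epsilon$ and $\tilde K$ (hence only on $\epsilon$ and $K$) with $|U_\theta^{\tilde c,\tilde\gamma}(x)-U_\theta^{\tilde c,\tilde\gamma}(-1)|<\epsilon$ for $-1<x<-1+\delta$ and all $(\tilde c,\tilde\gamma)\in\tilde K$. Substituting $U_\theta^{\tilde c,\tilde\gamma}=-U_\theta^{c,\gamma}(-\cdot)$ and replacing $x$ by $-x$ turns this into $|U_\theta^{c,\gamma}(x)-U_\theta^{c,\gamma}(1)|<\epsilon$ for $1-\delta<x<1$, which is the claim.

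The only real content is the bookkeeping in the middle step: one must verify that the reflection sends the excluded boundary component $\{\gamma=\gamma^-(c)\}$ to $\{\gamma=\gamma^+(c)\}$ and not the other way, since Lemma \ref{lemAp_1} is genuinely one-sided (it fails on $\{\gamma=\gamma^+(c)\}$). This is exactly what the relations $\gamma^\pm(\tilde c)=-\gamma^\mp(c)$ guarantee. Alternatively, if one prefers not to invoke the reflection, the argument of Lemma \ref{lemAp_1} can be repeated verbatim near $x=1$: there the limiting relation extracted from (\ref{eq:UthP}) is $2U_\theta(1)+\tfrac12 U_\theta(1)^2=2c_2$, the relevant auxiliary function is $s\mapsto 2s+\tfrac12 s^2$, which is increasing for $s\ge-2$, and the boundary value $U_\theta^{c,\gamma}(1)=\tau_2'(c_2)\ge-2$ holds because $\gamma>\gamma^-(c)$ (Theorem \ref{thm1_3}(ii)); the two cases $U_\theta^{c,\gamma}(1)>-2$ and $U_\theta^{c,\gamma}(1)=-2$ are then handled as Cases 1 and 2 there.
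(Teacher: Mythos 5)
Your proof is correct and matches the paper's intent: the paper states this lemma with only the remark ``Similarly we have,'' relying on exactly the reflection $\tilde U_\theta(x)=-U_\theta(-x)$ (already used to pass from Lemma \ref{lem:pre1} to Lemma \ref{lem:pre1'}), which swaps $c_1\leftrightarrow c_2$ and the two boundary components $\{\gamma=\gamma^\pm(c)\}$. Your careful verification that $\gamma^{\pm}(\tilde c)=-\gamma^{\mp}(c)$, so that the excluded set $\{\gamma=\gamma^-(c)\}$ is carried onto $\{\gamma=\gamma^+(c)\}$, is precisely the bookkeeping the paper leaves implicit.
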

\renewcommand{\thelem}{\thesection.\arabic{lem}}%

\begin{lem}\label{lemAp_3}
   Let $K$ be a compact subset of  $I\setminus\{(c,\gamma)\mid c_1=-1 \textrm{ or }\gamma=\gamma^+(c)\}$. Then for any $\epsilon>0$, there exist some positive constants $\delta$ and $C$, depending only on $\epsilon$ and $K$, such that for any $(c,\gamma)\in K$, 
  \[
     |U_{\theta}^{c,\gamma}(x)-U_{\theta}^{c,\gamma}(-1)|\le C(1+x)^{\min\{\sqrt{1+c_1},1\}-\epsilon}, \quad  -1<x<-1+\delta.
  \]
\end{lem}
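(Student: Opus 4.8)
The plan is to linearize (\ref{eq:UthP}) about its boundary value at $x=-1$ and read off the decay rate from the resulting Riccati structure. Since $K$ avoids $\{c_1=-1\}$ and $\{\gamma=\gamma^+(c)\}$, Theorem \ref{thm1_3}(ii) gives $U_\theta^{c,\gamma}(-1)=\tau_1(c_1)=2-2\sqrt{1+c_1}$, and on the compact set $K$ the quantity $\mu:=\sqrt{1+c_1}$ stays in a fixed interval $[\mu_0,\mu_1]\subset(0,\infty)$. First I would set $g:=U_\theta^{c,\gamma}-\tau_1(c_1)$ and $s:=1+x$, so that $g(-1)=0$, and substitute into (\ref{eq:UthP}). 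Using the identity $\tfrac12\tau_1^2-2\tau_1=2c_1=P_c(-1)$ from the proof of Lemma \ref{lem:pre1}, the forcing on the right collapses to a polynomial vanishing at $s=0$, and $g$ satisfies
\[
	s(2-s)g' + (2s-2\mu)g + \tfrac12 g^2 = As + Bs^2 ,
\]
where $A=-c_1+c_2+2c_3-2\tau_1$ and $B=-c_3$ depend continuously, hence boundedly, on $(c,\gamma)\in K$, and $'$ denotes $d/ds$ (equivalently $d/dx$).

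Next I would treat this as a perturbed linear equation. Its linear part $s(2-s)g'+(2s-2\mu)g$ has integrating factor $\rho(s)=s^{-\mu}(2-s)^{-(2-\mu)}$, whose homogeneous solutions decay like $s^\mu$, while the linear forcing $As$ drives a particular response of order $s$. Rewriting the equation as
\[
	(\rho g)' = \rho\,\frac{As+Bs^2-\tfrac12 g^2}{s(2-s)}
\]
and integrating makes the competition between the two rates explicit: the slower of $s^\mu$ and $s$, namely $s^{\min\{\mu,1\}}$, governs $g$. For $\mu<1$ one integrates up from $s=0$, where $\rho g$ has a finite limit; for $\mu>1$ one integrates down from $s=\delta$, where $\rho g$ blows up like $s^{1-\mu}$. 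A unified and cleaner alternative is a barrier argument, comparing $g$ directly against $\pm M s^{\min\{\mu,1\}-\epsilon}$.

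The nonlinear term $\tfrac12 g^2$ I would control by a bootstrap. Lemma \ref{lemAp_1} already gives $g\to 0$ as $x\to-1^+$ uniformly on $K$, so after shrinking $\delta$ the term $\tfrac12 g^2$ becomes genuinely subordinate to the linear terms; feeding a crude initial decay bound back into the integral (or into the barrier inequality) then upgrades it to the sharp rate, with $g^2$ entering only at order $s^{2\min\{\mu,1\}}$. The main difficulty, and the reason for the $-\epsilon$ loss in the exponent, is keeping the constant $C=C(\epsilon,K)$ uniform across $K$: as $\mu\to 1$ the particular-solution coefficient, of size $\sim(1-\mu)^{-1}$, degenerates, and at the resonance $\mu=1$ (that is, $c_1=0$) the response acquires a logarithm, $g\sim s\ln s$. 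The slack $s^{-\epsilon}$ absorbs both the blow-up of the coefficient and the resonant logarithm, since $s\ln s\le C_\epsilon\, s^{1-\epsilon}$ near $s=0$. I expect verifying this uniform estimate through the resonance to be the principal obstacle.

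Finally, I would note why the comparison principle of Lemma \ref{lem:c1c2:compare} is \emph{not} available here and a hands-on argument is forced: that lemma requires the common boundary value to be at least $2$, whereas $\tau_1(c_1)=2-2\mu<2$. Correspondingly, the integrating factor $b(x)=(1-x^2)^{-1}(2x+U_\theta)$ of (\ref{eq:c1c2:compare:b}) tends to $-\infty$ as $x\to-1^+$, so $\limsup_{x\to-1^+}\int_{-1+\delta}^x b\,ds=+\infty$ and the hypothesis of Lemma \ref{lem:c1c2:compare} fails. This is exactly why the decay near $x=-1$ in the $\tau_1$ regime must be extracted from the explicit integrating factor $\rho$ (or a barrier) rather than from a comparison with an auxiliary solution.
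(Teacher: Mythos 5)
Your proposal is sound and lands on the same mechanism as the paper's proof: a Duhamel/integrating-factor representation of $g=U_\theta^{c,\gamma}-\tau_1(c_1)$ near $x=-1$, with the uniform smallness of $g$ from Lemma \ref{lemAp_1} as the essential input, and the rate $\min\{\sqrt{1+c_1},1\}-\epsilon$ arising from the competition between the homogeneous decay $s^{\sqrt{1+c_1}}$ and the $O(s)$ forcing, the $\epsilon$-loss absorbing the resonance at $c_1=0$ and the nonuniformity near it. The one substantive difference is the treatment of the quadratic term. You keep $\tfrac12 g^2$ as a source for the linearized equation with the explicit integrating factor $\rho(s)=s^{-\mu}(2-s)^{-(2-\mu)}$, which forces the bootstrap you allude to; be aware that this bootstrap needs an actual starting point, since the bound $|g|\le\epsilon_0$ from Lemma \ref{lemAp_1} alone makes the contribution $\int\rho\,\sigma^{-1}g^2\,d\sigma\lesssim\epsilon_0\int\sigma^{-1-\mu}|g|\,d\sigma$ non-integrable at $\sigma=0$ unless one first establishes some algebraic decay of $g$ (e.g.\ via your barrier with a suboptimal exponent). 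The paper sidesteps this entirely through the identity $2xU_\theta+\tfrac12U_\theta^2-P_c(-1)=\tfrac12(U_\theta-\tau_1)(U_\theta-\tau_2)+2(1+x)U_\theta$, which makes the equation exactly linear in $g$ with the solution-dependent coefficient $\tfrac12(U_\theta-\tau_2)$; Lemma \ref{lemAp_1} then pins this coefficient to within $\epsilon$ of $-2\sqrt{1+c_1}$, so the integrating factor $e^{w}$ with $w=\int_0^x\frac{U_\theta-\tau_2}{2(1-s^2)}\,ds$ is squeezed between the powers $(1+x)^{\sqrt{1+c_1}\mp\epsilon/4}$ and no bootstrap is needed. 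In your notation this amounts to absorbing $\tfrac12 g$ into the linear coefficient $2s-2\mu$ rather than treating $\tfrac12 g^2$ as a perturbation --- a small change that removes the only delicate step in your outline. Your closing observation about why Lemma \ref{lem:c1c2:compare} is unavailable in the $\tau_1$ regime is correct and consistent with the paper's structure.
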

\begin{proof}
   For convenience, let us denote $U_{\theta}=U_{\theta}^{c,\gamma}$,  $\alpha_0=\sqrt{1+c_1}$, $\tau_1=\tau_1(c_1)=2-2\sqrt{1+c_1}$, and $\tau_2=\tau_2(c_1)=2+2\sqrt{1+c_1}$. Since $\gamma<\gamma^+(c)$, $U_{\theta}^{c,\gamma}(-1)=\tau_1$.
   
   Since $U_{\theta}$ satisfies (\ref{eq_prob}), we have 
   \[
      (1-x^2)(U_{\theta}-\tau_1)'+\frac{U_{\theta}-\tau_2}{2}(U_{\theta}-\tau_1)=h(x):=P_c(x)-P_c(-1)-2(1+x)U_{\theta}.
   \]
   Let $w:=\int_{0}^{x}\frac{U_{\theta}-\tau_2}{2(1-s^2)}ds$. We have 
   \begin{equation}\label{eqAp_3_1}
      U_{\theta}-\tau_1=(\gamma-\tau_1) e^{-w}+e^{-w}\int_{0}^{x}e^{w}\frac{h}{1-s^2}ds.
   \end{equation}
   
   By Lemma \ref{lemAp_1}, there exists some $\delta=\delta(\epsilon,K)>0$, such that $|U_{\theta}(x)-\tau_1|<\epsilon$ for all $x\in (-1,-1+\delta)$. By Lemma \ref{lem2_0_0}, $|U_{\theta}(x)|\le C(K)$ and therefore $|h(x)|\le C(K)(1+x)$ for $x\in (-1,1)$. So for all $x\in (-1,-1+\delta)$, 
   \[
       w<\int_{-1+\delta}^{x}\frac{\tau_1-\tau_2-\epsilon}{2(1-s^2)}ds+C(K)\le (-\alpha_0-\frac{\epsilon}{4})\ln (1+x)+C(\epsilon,K),
   \]
   and 
   \[
      w>\int_{-1+\delta}^{x}\frac{\tau_1-\tau_2+\epsilon}{2(1-s^2)}ds-C(K)\ge (-\alpha_0+\frac{\epsilon}{4})\ln (1+x)-C(\epsilon,K).
   \]
   Thus
   \[
      e^{w}\le C(\epsilon,K)(1+x)^{-\alpha_0-\frac{\epsilon}{4}}, \quad e^{-w}\le C(\epsilon, K)(1+x)^{\alpha_0-\frac{\epsilon}{4}}.
   \]
   Plugging this into (\ref{eqAp_3_1}), we have 
   \[
      |U_{\theta}-\tau_1|\le C(\epsilon,K)(1+x)^{\alpha_0-\frac{\epsilon}{4}}+C(\epsilon,K)(1+x), \quad -1<x<-1+\delta.
   \]
\end{proof}

\addtocounter{lem}{-1}
\renewcommand{\thelem}{\thesection.\arabic{lem}'}%
	\begin{lem}\label{lemAp_3'}
   Let $K$ be a compact subset of  $I\setminus\{(c,\gamma)\mid c_2=-1 \textrm{ or }\gamma=\gamma^-(c)\}$. Then for any $\epsilon>0$, there exists some positive constants $\delta$ and $C$, depending only on $\epsilon$ and $K$, such that for any $(c,\gamma)\in K$, 
  \[
     |U_{\theta}^{c,\gamma}(x)-U_{\theta}^{c,\gamma}(1)|\le C(1-x)^{\min\{\sqrt{1+c_2},1\}-\epsilon}, \quad  1-\delta<x<1.
  \]
\end{lem}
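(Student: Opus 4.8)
The plan is to deduce this statement from the already-established Lemma \ref{lemAp_3} by the reflection $x\mapsto -x$, in the same spirit as Lemma \ref{lem:pre1'} was deduced from Lemma \ref{lem:pre1}. Fix $(c,\gamma)\in K$ and set $\tilde c:=(c_2,c_1,c_3)$, $\tilde\gamma:=-\gamma$, and $\tilde U_\theta(x):=-U_\theta^{c,\gamma}(-x)$. Then $\tilde U_\theta'(x)=(U_\theta^{c,\gamma})'(-x)$, and writing $y=-x$ a direct substitution into (\ref{eq:UthP}) gives
\[
(1-x^2)\tilde U_\theta'(x)+2x\tilde U_\theta(x)+\frac{1}{2}\tilde U_\theta(x)^2=(1-y^2)(U_\theta^{c,\gamma})'(y)+2y\,U_\theta^{c,\gamma}(y)+\frac{1}{2}U_\theta^{c,\gamma}(y)^2=P_c(-x).
\]
Since $P_c(-x)=c_1(1+x)+c_2(1-x)+c_3(1-x^2)=P_{\tilde c}(x)$, the function $\tilde U_\theta$ solves (\ref{eq:UthP}) with parameter $\tilde c$.

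First I would identify $\tilde U_\theta$ with $U_\theta^{\tilde c,\tilde\gamma}$. Because $\tilde U_\theta(0)=-\gamma=\tilde\gamma$, the uniqueness in Theorem \ref{thm1_2} yields $\tilde U_\theta=U_\theta^{\tilde c,\tilde\gamma}$ once we know $(\tilde c,\tilde\gamma)\in I$. The reflected image $x\mapsto -U_\theta^-(c)(-x)$ attains at its left endpoint the value $-\tau_1'(c_2)=2+2\sqrt{1+c_2}=\tau_2(\tilde c_1)$, hence equals $U_\theta^+(\tilde c)$; likewise $U_\theta^+(c)$ maps to $U_\theta^-(\tilde c)$. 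Evaluating at $x=0$ gives $\gamma^+(\tilde c)=-\gamma^-(c)$ and $\gamma^-(\tilde c)=-\gamma^+(c)$, so $\gamma^-(c)\le\gamma\le\gamma^+(c)$ is equivalent to $\gamma^-(\tilde c)\le\tilde\gamma\le\gamma^+(\tilde c)$; thus $(\tilde c,\tilde\gamma)\in I$, and moreover $\gamma=\gamma^-(c)$ holds if and only if $\tilde\gamma=\gamma^+(\tilde c)$, while $c_2=-1$ holds if and only if $\tilde c_1=-1$.

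Next I would transfer the hypotheses on $K$. The map $(c,\gamma)\mapsto(\tilde c,\tilde\gamma)$ is a linear involution of $\mathbb{R}^4$, so the image $\tilde K$ of $K$ is compact, and by the equivalences above the assumption $K\subset I\setminus\{(c,\gamma)\mid c_2=-1\text{ or }\gamma=\gamma^-(c)\}$ becomes $\tilde K\subset I\setminus\{(\tilde c,\tilde\gamma)\mid \tilde c_1=-1\text{ or }\tilde\gamma=\gamma^+(\tilde c)\}$, exactly the class of compact sets admissible in Lemma \ref{lemAp_3}. Applying that lemma to $U_\theta^{\tilde c,\tilde\gamma}=\tilde U_\theta$ on $\tilde K$ produces $\delta,C>0$ depending only on $\epsilon$ and $\tilde K$ (hence only on $\epsilon$ and $K$) with
\[
|\tilde U_\theta(\tilde x)-\tilde U_\theta(-1)|\le C(1+\tilde x)^{\min\{\sqrt{1+\tilde c_1},1\}-\epsilon},\qquad -1<\tilde x<-1+\delta.
\]
Substituting $\tilde x=-x$ and using $\tilde U_\theta(-x)=-U_\theta^{c,\gamma}(x)$, $\tilde U_\theta(-1)=-U_\theta^{c,\gamma}(1)$, $1+\tilde x=1-x$, and $\sqrt{1+\tilde c_1}=\sqrt{1+c_2}$ gives the asserted bound on $1-\delta<x<1$.

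Every step is a mechanical translation under the involution, so there is no genuine analytic obstacle; the one point requiring care is the bookkeeping of the last two paragraphs, namely checking that the reflection interchanges $\gamma^+$ with $-\gamma^-$ so that the excluded stratum $\{\gamma=\gamma^-(c)\}$ maps precisely onto $\{\tilde\gamma=\gamma^+(\tilde c)\}$, and that $c_2=-1$ maps onto $\tilde c_1=-1$, so that the hypotheses of the present lemma correspond exactly to those of Lemma \ref{lemAp_3}. An equally valid alternative is to repeat the direct argument of Lemma \ref{lemAp_3} verbatim at the right endpoint: set $\alpha_0=\sqrt{1+c_2}$ and note $U_\theta^{c,\gamma}(1)=\tau_2'(c_2)$ (which holds since $\gamma\neq\gamma^-(c)$, by Theorem \ref{thm1_3}(ii)); factor the equation as $(1-x^2)(U_\theta-\tau_2'(c_2))'+\frac{1}{2}(U_\theta-\tau_1'(c_2))(U_\theta-\tau_2'(c_2))=\tilde h(x)$ with $\tilde h(x):=P_c(x)-P_c(1)-2(x-1)U_\theta$; introduce the integrating factor $w=\int_0^x\frac{U_\theta-\tau_1'(c_2)}{2(1-s^2)}\,ds$; and use Lemma \ref{lemAp_1'} together with Lemma \ref{lem2_0_0} to bound $e^{\pm w}$ near $x=1$ by powers of $1-x$, reproducing the exponent $\min\{\sqrt{1+c_2},1\}-\epsilon$ without any reflection.
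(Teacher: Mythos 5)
Your reflection argument is correct and is exactly the mechanism the paper intends for all its primed lemmas (cf.\ the proof of Lemma \ref{lem:pre1'}): the involution $\tilde U_\theta(x)=-U_\theta(-x)$ swaps $c_1\leftrightarrow c_2$, sends $U_\theta^{\mp}(c)$ to $U_\theta^{\pm}(\tilde c)$ so that $\gamma^{\mp}(c)=-\gamma^{\pm}(\tilde c)$, and carries the excluded strata of Lemma \ref{lemAp_3'} precisely onto those of Lemma \ref{lemAp_3}, after which the estimate transfers with $1+\tilde x=1-x$ and $\sqrt{1+\tilde c_1}=\sqrt{1+c_2}$. Your bookkeeping of these correspondences is accurate, so the proof is complete and matches the paper's approach.
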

\renewcommand{\thelem}{\thesection.\arabic{lem}}%

\bigskip

\begin{lem}\label{lemAp_2}
Let $K$ be a compact subset of  $I\cap \{(c,\gamma)\mid c_1=-1, \gamma<\gamma^+(c)\}$. Then for any $\epsilon>0$, there exists some $\delta>0$, depending only on $\epsilon$ and $K$, such that for any $(c,\gamma)\in K$, 
  \[
     |(U_{\theta}^{c,\gamma}-2)\ln \left(\frac{1+x}{3}\right)-4|<\epsilon, \quad -1<x<-1+\delta.
  \]
\end{lem}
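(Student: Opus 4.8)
The plan is to reduce the statement to a sharp asymptotic analysis of $v := U_\theta^{c,\gamma} - 2$ near $x=-1$. Since $c_1=-1$ on $K$ we have $\tau_1(c_1)=\tau_2(c_1)=2$, and since $\gamma<\gamma^+(c)$, Theorem \ref{thm1_3}(ii) gives $U_\theta^{c,\gamma}(-1)=\tau_1(c_1)=2$, so $v\to 0$ as $x\to-1^+$, uniformly on $K$ by Lemma \ref{lemAp_1}. Substituting $U_\theta=2+v$ into (\ref{eq:UthP}) turns the equation into the clean form
\[
 (1-x^2)v'+2(1+x)v+\tfrac12 v^2=(c_2-3)(1+x)+c_3(1-x^2),
\]
with right-hand side $O(1+x)$ and $U_\theta$ bounded on $K$ by Lemma \ref{lem2_0_0}. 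The claim $v\,\ln\frac{1+x}{3}\to 4$ is equivalent to $v\sim \frac{4}{\ln(1+x)}$, and the constant $4$ is transparent from the dominant balance $2(1+x)v'+\tfrac12 v^2\approx 0$: its solutions $v\approx \frac{a}{\ln(1+x)}$ force $a(a-4)=0$, hence $a=4$.

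The first real step is to fix the sign of $v$. I would show $v<0$ on some $(-1,-1+\delta)$, uniformly for $(c,\gamma)\in K$, by comparing $U_\theta^{c,\gamma}$ with the analytic solution $U_\theta^+(c)$ of Lemma \ref{lem:c1c2:>c3bar} (note $c_3>\bar{c}_3$ on $K$, so $U_\theta^+(c)$ exists and $U_\theta^{c,\gamma}<U_\theta^+(c)$). Since both solve (\ref{eq:UthP}) with the \emph{same} $c$, the difference $w:=U_\theta^+(c)-U_\theta^{c,\gamma}>0$ satisfies the linear homogeneous equation $(1-x^2)w'+\big(2x+\tfrac12(U_\theta^++U_\theta^{c,\gamma})\big)w=0$. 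If $v\ge 0$ on a whole neighborhood of $-1$, then the coefficient $2x+\tfrac12(U_\theta^++U_\theta^{c,\gamma})=2(1+x)+\tfrac12(v^++v)$ is positive there (with $v^+=U_\theta^+-2\sim a_1(1+x)$ from Lemma \ref{lem:c1c2:LocalSeriesSol}, where $a_1=\tfrac12(c_2-3)+c_3$), which forces $w$ to be monotone in a way incompatible with $w\to 0^+$. Since any zero of $v$ near $-1$ has $v'\to a_1$, only one crossing direction is possible, and this upgrades the contradiction to $v<0$ on $(-1,-1+\delta)$. When $a_1<0$ this is immediate, because then $U_\theta^+(c)<2$ near $-1$.

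With $v<0$ in hand I set $\phi:=1/v<0$, so $\phi\to-\infty$, and a direct computation converts the Riccati equation into
\[
 \phi'=\frac{1}{2(1-x^2)}+\frac{2}{1-x}\,\phi-\Big(\frac{c_2-3}{1-x}+c_3\Big)\phi^2 .
\]
Near $x=-1$ the leading term $\frac{1}{2(1-x^2)}\sim \frac{1}{4(1+x)}$ blows up like $(1+x)^{-1}$ and therefore dominates both the linear term (of size $|\phi|\sim|\ln(1+x)|$) and the quadratic term (of size $\phi^2\sim(\ln(1+x))^2$). I would make this rigorous by a bootstrap: first drop the quadratic term (whose sign is controlled by that of $a_1$) to get a linear differential inequality for $\phi$, whose integrating factor $(1-x)^2$ yields the crude bound $\phi\ge \tfrac14\ln(1+x)+C$; then use the resulting $|\phi|\le \tfrac14|\ln(1+x)|+C$ to bound the quadratic term and integrate the reverse inequality, obtaining $\phi\le \tfrac14\ln(1+x)+o(\ln(1+x))$. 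Together these give $\phi=\tfrac14\ln(1+x)\,(1+o(1))$, hence $v\ln(1+x)=\ln(1+x)/\phi\to 4$, and since $\ln\frac{1+x}{3}=\ln(1+x)-\ln 3\sim \ln(1+x)$ (with $v\to0$), the desired limit follows. Uniformity over $K$ holds because $a_1$, the lower bound on $\delta$, and the error rates all depend on $K$ only through continuous functions of $c$ and the uniform bounds of Lemmas \ref{lem2_0_0} and \ref{lemAp_1}.

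The hardest part is the sign step together with the borderline nature of the decay. The natural comparison tool, Lemma \ref{lem:c1c2:compare}, is \emph{not} directly applicable here: the logarithmic barriers $2+\frac{a}{\ln((1+x)/3)}$ violate its integral condition (\ref{eq:c1c2:f1}), since $\int \frac{v}{1-x^2}$ diverges like $\ln|\ln(1+x)|$ at the log rate. This is exactly why the proof must go through the linearized difference with $U_\theta^+(c)$ and through the bootstrap on $\phi$ rather than by a one-shot barrier comparison. The remaining technical points are controlling the quadratic term $a_1\phi^2$ in the bootstrap (which requires the crude a priori bound before the sharp coefficient $\tfrac14$ can be extracted) and treating the marginal case $a_1=0$ while keeping all constants uniform over $K$.
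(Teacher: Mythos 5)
Your route is genuinely different from the paper's. The paper does not re-derive the asymptotics $\lim_{x\to-1^+}(U_\theta^{c,\gamma}-2)\ln\frac{1+x}{3}=4$ at all: it imports this pointwise limit from Theorem 1.3 of \cite{LLY1}, sets $g=(U_\theta-2)\ln\frac{1+x}{3}$, and upgrades the pointwise statement to a uniform one over $K$ by a compactness/contradiction argument (extract $(c^i,\gamma^i)\to(c,\gamma)$ in $K$, use $C^1_{loc}$ convergence of the solutions, choose points where $g_i=4+\frac{\epsilon}{4}$ or $4-\frac{\epsilon}{2}$ with a favorable sign of $g_i'$, and evaluate the equation for $g$ there to contradict the fact that $h(\xi)=-2\xi+\frac12\xi^2$ is nonzero for $\xi\neq 0,4$). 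Your proposal instead tries to prove the asymptotics from scratch via $\phi=1/v$ and a bootstrap. That is a legitimate alternative in principle, and your algebra (the equation for $v$, the equation for $\phi$, the dominant balance giving $a=4$) is correct, but as written the argument has real gaps.

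First, the bootstrap cannot start in general. The crude bound $\phi\ge\frac14\ln(1+x)-C$ is obtained by dropping the quadratic term, which is sign-favorable only when $a_1\ge 0$; when $a_1<0$ the quadratic term points the wrong way, and the only a priori upper bound on $|\phi|$ available (from $v<v^+\sim a_1(1+x)$, namely $|\phi|\lesssim (1+x)^{-1}$) makes $\phi^2$ of order $(1+x)^{-2}$, which dominates the $\frac{1}{4(1+x)}$ term you want to integrate, so the bootstrap does not close. (Lemma \ref{lemAp_1} only gives the wrong-direction bound $|v|<\epsilon$.) Second, and independently of the sign of $a_1$, the constant $C$ in your crude bound contains $\phi(x_0)=1/v(x_0)$ at the starting point of the integration, so uniformity over $K$ requires a uniform positive lower bound on $|U_\theta^{c,\gamma}(x_0)-2|$ at a uniformly chosen $x_0$; that is essentially part of what is being proved, and the closing assertion that uniformity follows "through continuous functions of $c$" does not supply it. You would still need a compactness argument of the paper's type (or the citation to \cite{LLY1}) to finish. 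The unresolved $a_1=0$ case of the sign step, which you flag yourself, is a third, more minor and patchable, gap.
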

\begin{proof}
If $U_{\theta}:=U_{\theta}^{c,\gamma}$ is a solution of  (\ref{eq_prob}) with $(c,\gamma)\in I$,  $c_1=-1$, and $\gamma<\gamma^+(c)$,  we have $U_{\theta}(-1)=2$. Denote 
\begin{equation*}
     g:=g^{c,\gamma}=(U_{\theta}-2)\ln  \left(\frac{1+x}{3}\right), \quad -1<x<0.
     \end{equation*}
     
Then by Theorem 1.3 in \cite{LLY1}, $g(-1)=4$, $g(x)$ satisfies 

\begin{equation}\label{eqAp_2_1}
(1-x^2)\ln \left(\frac{1+x}{3}\right) g'-(1-x)g+\frac{1}{2}g^2=H_{c,\gamma}(x):=(P_c(x)-2(1+x)U_{\theta}+2)\left(\ln \frac{1+x}{3}\right)^2.
\end{equation}

   We prove the lemma by contradiction. Assume there exist some $\epsilon>0$ and a sequence $(c^i,\gamma^i)\in K$ and $-1<x_i<-1+\frac{1}{i}$, such that 
   \[
      |g^{c^i,\gamma^i}(x_i)-g^{c^i,\gamma^i}(-1)|= |g^{c^i,\gamma^i}(x_i)-4|\ge \epsilon.
   \]
   Since $K$ is compact, there exist a subsequence, still denoted as $(c^i,\gamma^i)$, and some $(c,\gamma)\in K$, such that $(c^i,\gamma^i)\to (c,\gamma)$ as $i\to \infty$.
   
   Denote $g_i=g^{c^i,\gamma^i}$. By standard ODE theory, we have that $g_i\to g:=g^{c,\gamma}$ in $C^1_{loc}(-1,1)$. As explained earlier, $g(-1)=4$.
   
  We first assume that
   \begin{equation}\label{eqAp_7_00}
      g_i(x_i)\ge 4+\epsilon.
   \end{equation}
   Using this and the fact that $g_i\to g$ in $C^1_{loc}(-1,1)$, by similar arguments as in the proof of Lemma \ref{lemAp_1}, we have that there exist $x_i\le \tilde{x}_i\to -1$, such that 
   \begin{equation*}
      \xi_i:=g_i(\tilde{x}_i)= 4+\frac{\epsilon}{4}=:\xi, \textrm{ and }g'_i(\tilde{x}_i)\le 0.
   \end{equation*}
      
   Let $h(s):=-2s+\frac{1}{2}s^2$. By (\ref{eqAp_2_1}) we have that 
   \[
       -(1-\tilde{x}_i)g_i(\tilde{x}_i)+\frac{1}{2}g^2_i(\tilde{x}_i)\le H_{c^i,\gamma^i}(\tilde{x}_i).
   \]
   Sending $i\to\infty$, we have 
   \[
       h(\xi)\le H_{c,\gamma}(-1)=0.
   \]
   On the other hand, since $\xi >4$, so $h(\xi)>0$. A contradiction.
   
   Now if instead of (\ref{eqAp_7_00}), we have
   \[
       g_i(x_i)\le 4-\epsilon.
   \]
   Without loss of generality, we assume that $0<\epsilon<1$. As in Case 1  of the proof of Lemma \ref{lemAp_1}, there exists $\tilde{x}_{i_j}\to -1$, such that 
   \[
      g_{i_j}(\tilde{x}_{i_j})=4-\frac{\epsilon}{2}=:\xi, \textrm{ and }g_{i_j}'(\tilde{x}_{i_j})\ge 0.
   \]
   By (\ref{eqAp_2_1}) we have that 
   \[
       -(1-\tilde{x}_{i_j})g_{i_j}(\tilde{x}_{i_j})+\frac{1}{2}g^2_{i_j}(\tilde{x}_{i_j})\ge H_{c^{i_j},\gamma^{i_j}}(\tilde{x}_{i_j}).
   \]
   Sending $i\to\infty$, we have 
   \[
       h(\xi)\ge H_{c,\gamma}(-1)=0.
   \]
   On the other hand, since $3<\xi <4$, so $h(\xi)<h(4)=0$. A contradiction.
   \end{proof}

Similarly,   we have 
\addtocounter{lem}{-1}
\renewcommand{\thelem}{\thesection.\arabic{lem}'}%
	\begin{lem}\label{lemAp_2'}
Let $K$ be a compact subset of  $I\cap \{ (c,\gamma)\mid c_2=-1, \gamma>\gamma^-(c)\}$. Then for any $\epsilon>0$, there exists some $\delta>0$, depending only on $\epsilon$ and $K$, such that for any $(c,\gamma)\in K$, 
  \[
     |(U_{\theta}^{c,\gamma}+2)\ln \left(\frac{1-x}{3}\right)+4|<\epsilon, \quad  1-\delta<x<1.
  \]
\end{lem}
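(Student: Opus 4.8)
The plan is to deduce Lemma \ref{lemAp_2'} from Lemma \ref{lemAp_2} by the reflection $\tilde U_\theta(x) := -U_\theta(-x)$, the same device already used to pass from Lemma \ref{lem:pre1} to Lemma \ref{lem:pre1'}. First I would record how this reflection acts on the data of \eqref{eq:UthP}. A direct substitution shows that if $U_\theta = U_\theta^{c,\gamma}$ solves \eqref{eq:UthP} with parameters $(c_1,c_2,c_3)$, then $\tilde U_\theta$ solves \eqref{eq:UthP} with the swapped parameters $\tilde c := (c_2,c_1,c_3)$; since $\bar c_3$ is symmetric in its two arguments, the continuous involution $(c,\gamma)\mapsto(\tilde c,-\gamma)$ carries $I$ into $I$. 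Moreover $\tilde U_\theta(0) = -\gamma$, and because the reflection interchanges the poles $x=-1$ and $x=1$ while $\tilde U_\theta(-1) = -U_\theta(1)$, the solution $U^-_\theta(c)$ (which satisfies $U^-_\theta(c)(1)=\tau_1'(c_2)$) corresponds under reflection to $U^+_\theta(\tilde c)$; consequently $\gamma^+(\tilde c) = -\gamma^-(c)$.

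With these identities in hand, the second step is bookkeeping. Given a compact $K\subset I\cap\{(c,\gamma)\mid c_2=-1,\ \gamma>\gamma^-(c)\}$, its image $\tilde K$ under the above involution is a compact subset of $I\cap\{(\tilde c,\tilde\gamma)\mid \tilde c_1=-1,\ \tilde\gamma<\gamma^+(\tilde c)\}$, precisely the hypothesis class of Lemma \ref{lemAp_2}: indeed $\tilde c_1=-1\Leftrightarrow c_2=-1$, and $\tilde\gamma=-\gamma<\gamma^+(\tilde c)=-\gamma^-(c)\Leftrightarrow\gamma>\gamma^-(c)$. Applying Lemma \ref{lemAp_2} to $\tilde U_\theta$ on $\tilde K$ with the same $\epsilon$ produces a $\delta=\delta(\epsilon,\tilde K)>0$ with
\[
   \left|(\tilde U_\theta - 2)\ln\!\left(\frac{1+x}{3}\right) - 4\right| < \epsilon, \qquad -1<x<-1+\delta.
\]
Since $\tilde K$ is determined by $K$, this $\delta$ depends only on $\epsilon$ and $K$, which is the required uniformity.

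The final step is to undo the change of variable. Evaluating the displayed estimate at $x=-t$ for $t\in(1-\delta,1)$ and using $\tilde U_\theta(-t) = -U_\theta(t)$ together with $\ln\frac{1-t}{3}$ in place of $\ln\frac{1+x}{3}$ turns the bracketed quantity into $-\big[(U_\theta(t)+2)\ln\frac{1-t}{3}+4\big]$, whose absolute value is unchanged. After renaming $t$ to $x$ this is exactly
\[
   \left|(U_\theta^{c,\gamma} + 2)\ln\!\left(\frac{1-x}{3}\right) + 4\right| < \epsilon, \qquad 1-\delta<x<1,
\]
as claimed. I do not anticipate a genuine obstacle: the only point demanding care is verifying the reflection identities $\tilde c=(c_2,c_1,c_3)$ and $\gamma^+(\tilde c)=-\gamma^-(c)$, so that the hypothesis $\gamma>\gamma^-(c)$ maps correctly to $\tilde\gamma<\gamma^+(\tilde c)$; once these are in place the conclusion is immediate. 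Alternatively, one could repeat verbatim the contradiction argument of Lemma \ref{lemAp_2}, setting $g:=(U_\theta+2)\ln\frac{1-x}{3}$, deriving its Riccati-type equation near $x=1$ from Theorem 1.3 in \cite{LLY1}, and rerunning the maximum-principle comparison with $h(s)=-2s+\frac{1}{2}s^2$; but the reflection route is shorter and avoids re-deriving the governing equation for $g$.
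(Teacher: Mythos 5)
Your proposal is correct, and it matches the paper's intent: the paper gives no separate proof for this primed lemma beyond ``Similarly, we have,'' and the reflection $\tilde U_\theta(x)=-U_\theta(-x)$ is exactly the device the paper itself uses to pass from Lemma \ref{lem:pre1} to Lemma \ref{lem:pre1'}. Your verification of the parameter swap $\tilde c=(c_2,c_1,c_3)$, of $\gamma^+(\tilde c)=-\gamma^-(c)$ (via order reversal under the reflection), and of the resulting mapping of the hypothesis class onto that of Lemma \ref{lemAp_2} is all that needs checking, and it is done correctly.
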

\renewcommand{\thelem}{\thesection.\arabic{lem}}%

The next lemma strengthens Lemma \ref{lemAp_2}.
\begin{lem}\label{lemAp_4}
Let $K$ be a compact subset of  $I\cap \{(c,\gamma)\mid c_1=-1, \gamma<\gamma^+(c)\}$. Then for any $\epsilon>0$, there exists some positive constants $\delta$ and $C$, depending only on $\epsilon$ and $K$, such that for any $(c,\gamma)\in K$, 
  \[
     \left|U_{\theta}^{c,\gamma}(x)-2-\frac{4}{\ln \frac{1+x}{3}}\right|<C\left|\ln \frac{1+x}{3}\right|^{-2+\epsilon}, \quad  -1<x<-1+\delta.
  \]
\end{lem}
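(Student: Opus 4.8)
The plan is to turn the qualitative limit of Lemma \ref{lemAp_2} into a quantitative rate. Fix $(c,\gamma)\in K$, so $c_1=-1$, $\gamma<\gamma^+(c)$ and $U_\theta:=U_\theta^{c,\gamma}$ satisfies $U_\theta(-1)=2$; write $L:=\ln\frac{1+x}{3}$ and $g:=(U_\theta-2)L$, which by Lemma \ref{lemAp_2} obeys $g\to4$ as $x\to-1^+$, uniformly for $(c,\gamma)\in K$. Since $U_\theta-2-\frac{4}{L}=\frac{g-4}{L}$, the assertion is equivalent to the bound $|g-4|\le C|L|^{-1+\epsilon}$ on $(-1,-1+\delta)$, and this is what I would estimate directly from equation (\ref{eqAp_2_1}).

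The key device is to linearize the $g$-equation by factoring its quadratic part: in (\ref{eqAp_2_1}) one has $-(1-x)g+\frac12 g^2=\frac12 g\bigl(g-2(1-x)\bigr)$, so setting $\phi:=g-2(1-x)$ (which tends to $0$, and satisfies $g-4=\phi-2(1+x)$, differing from $g-4$ only by the exponentially small $2(1+x)$) converts (\ref{eqAp_2_1}) into the genuinely linear equation $(1-x^2)L\,\phi'+\frac12 g\,\phi=H_{c,\gamma}+2(1-x^2)L$. The crucial point is that the coefficient $\frac12 g$ is a known bounded function rather than the unknown itself, so there is no Riccati/bootstrap obstruction. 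Dividing by $(1-x^2)L$ and using the integrating factor $e^{\hat w}$ with $\hat w(x):=\int_{x_0}^{x}\frac{g(s)}{2(1-s^2)L(s)}\,ds$ for a fixed $x_0\in(-1,0)$, I obtain $\bigl(e^{\hat w}\phi\bigr)'=e^{\hat w}F$ with the explicit forcing $F=\frac{R(x)L-2g+2(1-x)}{1-x}$, $R(x):=c_2+c_3(1-x)-3$, which is $O(|L|)$ because $g$ is bounded (Lemmas \ref{lem2_0_0} and \ref{lemAp_2}); integrating gives $e^{\hat w(x)}\phi(x)=e^{\hat w(x_0)}\phi(x_0)+\int_{x_0}^{x}e^{\hat w}F\,ds$.

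Everything then reduces to controlling $\hat w$. Near $s=-1$ the integrand behaves like $\frac{g}{2(1-s^2)L}\sim\frac{1}{(1+s)L}$ and $\frac{ds}{1+s}=dL$, so Lemma \ref{lemAp_2} in the form $g=4+o(1)$ (uniform on $K$) yields $\hat w=\int\frac{4+o(1)}{2(1-s^2)L}\,ds=\ln|L|+o(\ln|L|)=(1+o(1))\ln|L|$. This is exactly where the loss enters: lacking a rate in Lemma \ref{lemAp_2}, I can only conclude $e^{\hat w}\le C_\epsilon|L|^{1+\epsilon}$ and $e^{-\hat w}\le C_\epsilon|L|^{-1+\epsilon}$ on $(-1,-1+\delta)$, uniformly for $(c,\gamma)\in K$ after shrinking $\delta=\delta(\epsilon,K)$. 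Feeding these into the representation gives $|e^{\hat w}F|\le C|L|^{2+\epsilon}$, which is integrable as $s\to-1$ (logarithmic singularity in $s$, equivalently exponential decay after $t=-L$), so $e^{\hat w(x)}|\phi(x)|\le|e^{\hat w(x_0)}\phi(x_0)|+\int_{x_0}^{x}|e^{\hat w}F|\,ds\le C(\epsilon,K)$, and multiplying by $e^{-\hat w(x)}\le C|L|^{-1+\epsilon}$ yields $|\phi|\le C|L|^{-1+\epsilon}$, hence $|g-4|\le C|L|^{-1+\epsilon}$ and the claim. The main obstacle is precisely this sharp control of the singular integrating factor: the linear operator supplies only the polynomial weight $e^{\hat w}\sim|L|$ rather than exponential decay, so the output exponent is rigidly tied to the accuracy of $\hat w$, and the unquantified $o(1)$ in $g\to4$ is what forces the $\epsilon$; removing it would require first upgrading Lemma \ref{lemAp_2} to a genuine rate (for instance by exploiting the favorable sign of the quadratic term), which is not needed for the present statement.
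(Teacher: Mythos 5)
Your proof is correct, and at heart it is the same integrating-factor argument as the paper's, run in a different variable. The paper works directly with $V:=U_\theta-2-\frac{4}{\ln\frac{1+x}{3}}$, which satisfies $(1-x^2)V'+\frac{4}{\ln\frac{1+x}{3}}V+\frac12V^2=h$ with $|h|\le C(1+x)$; it absorbs the quadratic term into the integrating factor $e^{w}$, $w=\int\frac{\frac12V+\frac{4}{\ln\frac{1+s}{3}}}{1-s^2}\,ds$, and uses Lemma \ref{lemAp_2} (in the form $|V|\le\epsilon/|\ln\frac{1+x}{3}|$) to pin $e^{\pm w}$ between constants times $|\ln\frac{1+x}{3}|^{\pm(2\mp\epsilon)}$, which gives the stated bound in one step. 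Your unknown $\phi=g-2(1-x)$ satisfies $\phi=VL+2(1+x)$ with $L=\ln\frac{1+x}{3}$, so your linear equation for $\phi$ is essentially the paper's equation for $V$ multiplied through by $L$ (modulo exponentially small terms); this shifts the integrating-factor exponent from $2$ to $1$ and the forcing from $O(1+x)$ to $O(|L|)$, but the bookkeeping --- boundedness from Lemma \ref{lem2_0_0}, the uniform $o(1)$ of Lemma \ref{lemAp_2} as the sole source of the $\epsilon$-loss, and convergence of $\int e^{\hat w}F\,ds$ near $x=-1$ --- is identical, and your final division by $L$ recovers exactly the exponent $-2+\epsilon$. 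Both versions are complete and uniform on $K$; the paper's choice of $V$ is marginally cleaner because the target quantity is estimated directly rather than through $g-4$ and a division by $L$ at the end, while your factorization $-(1-x)g+\frac12 g^2=\frac12 g\,(g-2(1-x))$ makes the linearity of the resulting equation slightly more transparent.
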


\begin{proof}
 For convenience let us denote $U_{\theta}=U_{\theta}^{c,\gamma}$. Let $V:=U_{\theta}-2-\frac{4}{\ln\frac{1+x}{3}}$. Then $V$ satisfies the equation
 \[
     (1-x^2)V'+\frac{4}{\ln \frac{1+x}{3}}V+\frac{1}{2}V^2 =h(x),
 \]
 where $h:=P_c(x)-P_c(-1)-\frac{4(1+x)}{\left(\ln \frac{1+x}{3}\right)^2}-2(1+x)V-4(1+x)-\frac{8 (1+x)}{\ln\frac{1+x}{3}}$. We have, using Lemma \ref{lemAp_1}, that there exists some $\delta=\delta(\epsilon,K)$, such that $|h|\le C(\epsilon,K)(1+x)$ for all $x\in (-1,-1+\delta)$.
 
 Let $w:=\int_{-\frac{1}{2}}^{x}\frac{\frac{1}{2}V+\frac{4}{\ln\frac{1+s}{3}}}{1-s^2}ds$. We have 
 \begin{equation}\label{eqAp_4_1}
   V=V\left(-\frac{1}{2}\right)e^{-w}+e^{-w}\int_{-\frac{1}{2}}^{x}e^w\frac{h}{1-s^2}ds.
 \end{equation}
 Since $V\left(-\frac{1}{2}\right)=U_{\theta}\left(-\frac{1}{2}\right)-2+\frac{4}{\ln 6}$, we have $|V\left(-\frac{1}{2}\right)|\le C(K)$. By Lemma \ref{lemAp_2}, making  $\delta=\delta(\epsilon,K)>0$ smaller if necessary, we have  $|(U_{\theta}-2)\ln\frac{1+x}{3}-4|<\epsilon$, i.e. $|V|\le \frac{\epsilon}{\left|\ln\frac{1+x}{3}\right|}$, for all $-1<x<-1+\delta$. We also have $|h|\le C(\epsilon,K)(1+x)$ for all $x\in (-1,-1+\delta)$. Thus for all $-1<x<-1+\delta<-\frac{1}{2}$, we have 
 \[
     w\le C(\epsilon,K)+ \int_{-1+\delta}^{x}\frac{4+\frac{\epsilon}{2}}{(1-s^2)\ln\frac{1+s}{3}}\le C(\epsilon,K)+(2+\epsilon)\ln(-\ln\frac{1+x}{3}),
 \]
 and 
 \[
     w\ge -C(\epsilon,K)+\int_{-1+\delta}^{x}\frac{4-\frac{\epsilon}{2}}{(1-s^2)\ln\frac{1+s}{3}}\ge -C(\epsilon,K)+(2-\epsilon)\ln(-\ln\frac{1+x}{3}).
 \]
 So 
 \[
    e^{w}\le C(\epsilon,K)\left|\ln\frac{1+x}{3}\right|^{2+\epsilon}, \quad e^{-w}\le C(\epsilon,K)\left|\ln\frac{1+x}{3}\right|^{-2+\epsilon}.
 \]
 Plugging this into (\ref{eqAp_4_1}), we have 
 \[
    |V|\le C(\epsilon,K)\left|\ln\frac{1+x}{3}\right|^{-2+\epsilon}.
 \]
 The proof is finished.
\end{proof}

Similarly we have the following strengthening of Lemma \ref{lemAp_2'}.
\addtocounter{lem}{-1}
\renewcommand{\thelem}{\thesection.\arabic{lem}'}%
\begin{lem}\label{lemAp_4'}
Let $K$ be a compact subset of  $I\cap \{(c,\gamma)\mid c_2=-1, \gamma>\gamma^-(c)\}$. Then for any $\epsilon>0$, there exists some positive constants $\delta$ and $C$, depending only on $\epsilon$ and $K$, such that for any $(c,\gamma)\in K$, 
  \[
     |U_{\theta}^{c,\gamma}(x)+2+\frac{4}{\ln\frac{1-x}{3}}|<C\left|\ln\frac{1-x}{3}\right|^{-2+\epsilon}, \quad  1-\delta<x<1.
  \]
\end{lem}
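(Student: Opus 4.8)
The plan is to prove Lemma~\ref{lemAp_4'} by reducing it to Lemma~\ref{lemAp_4} through the reflection $x\mapsto -x$, in exactly the spirit in which Lemma~\ref{lem:pre1'} was reduced to Lemma~\ref{lem:pre1}. The guiding observation is that equation (\ref{eq:UthP}) is invariant, up to a relabeling of the constants, under the involution $U_\theta(x)\mapsto -U_\theta(-x)$.

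First I would set $\tilde U_\theta(x) := -U_\theta^{c,\gamma}(-x)$ and verify by direct substitution that $\tilde U_\theta$ solves (\ref{eq:UthP}) with the reflected parameter $\tilde c := (c_2,c_1,c_3)$. This is immediate because $1-x^2$ and the quadratic nonlinearity $\tfrac12 U_\theta^2$ are unchanged under the sign flip, the term $2xU_\theta$ is invariant, and $P_c(-x)=c_1(1+x)+c_2(1-x)+c_3(1-x^2)=P_{\tilde c}(x)$. Its normalization is $\tilde U_\theta(0)=-\gamma=:\tilde\gamma$, so by uniqueness $\tilde U_\theta = U_\theta^{\tilde c,\tilde\gamma}$.

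Next I would track how the admissible parameter set transforms under $\Phi(c,\gamma):=(\tilde c,\tilde\gamma)$. Since $\bar c_3(c_1,c_2)$ is symmetric in its two arguments, the constraint defining $J$ (hence $I$) is preserved, so $\tilde c\in J$ whenever $c\in J$, and $\Phi$ interchanges the hyperplanes $\{c_2=-1\}$ and $\{c_1=-1\}$. Using Lemma~\ref{lem:c1c2:>c3bar} together with the identity $\tau_2(c_1)=-\tau_1'(c_1)$, one checks that $\tilde U$ built from $U_\theta^+(c)$ equals $U_\theta^-(\tilde c)$, which yields the sign/swap rules $\gamma^-(\tilde c)=-\gamma^+(c)$ and $\gamma^+(\tilde c)=-\gamma^-(c)$. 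Consequently the open condition $\gamma>\gamma^-(c)$ becomes $\tilde\gamma<\gamma^+(\tilde c)$, and $\Phi$ maps $I\cap\{c_2=-1,\ \gamma>\gamma^-(c)\}$ homeomorphically onto $I\cap\{c_1=-1,\ \gamma<\gamma^+(c)\}$. In particular $\Phi(K)$ is a compact subset of the domain of Lemma~\ref{lemAp_4}.

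Finally I would apply Lemma~\ref{lemAp_4} to $U_\theta^{\tilde c,\tilde\gamma}=\tilde U_\theta$ on the compact set $\Phi(K)$, obtaining $\delta,C$ depending only on $\epsilon$ and $\Phi(K)$ (hence only on $\epsilon$ and $K$) with $\bigl|\tilde U_\theta(x)-2-\tfrac{4}{\ln\frac{1+x}{3}}\bigr|<C\bigl|\ln\frac{1+x}{3}\bigr|^{-2+\epsilon}$ on $(-1,-1+\delta)$. Writing $x=-z$, so that $1-\delta<z<1$, and using $\tilde U_\theta(-z)=-U_\theta^{c,\gamma}(z)$ while noting that $\ln\frac{1+x}{3}$ becomes $\ln\frac{1-z}{3}$, converts this bound into exactly the claimed estimate. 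The argument is purely a change of variables, so there is no genuine analytic obstacle; the only point requiring care is the bookkeeping in the second step, namely confirming the symmetry of $\bar c_3$ and the sign/swap rules for $\gamma^\pm$ so that $K$ is sent precisely into the region where Lemma~\ref{lemAp_4} applies and the resulting constants depend only on the permitted data.
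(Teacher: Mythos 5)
Your proposal is correct and is essentially the paper's own route: the paper proves Lemma \ref{lemAp_4} in detail and obtains Lemma \ref{lemAp_4'} ``similarly,'' the intended mechanism being exactly the reflection $\tilde U_\theta(x)=-U_\theta(-x)$ that the paper introduces in the proof of Lemma \ref{lem:pre1'}. Your bookkeeping of how $\Phi(c,\gamma)=(c_2,c_1,c_3,-\gamma)$ swaps $\{c_2=-1\}$ with $\{c_1=-1\}$ and sends $\gamma>\gamma^-(c)$ to $\tilde\gamma<\gamma^+(\tilde c)$ is accurate.
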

\renewcommand{\thelem}{\thesection.\arabic{lem}}%


\bigskip

\bigskip

Now using Lemma \ref{lemAp_1}--Lemma \ref{lemAp_4'}, we prove the following estimates of partial derivatives of $U_{\theta}:=U_{\theta}^{c,\gamma}$ with respect to $(c,\gamma)$ on each $I_{k,l}$.

\begin{lem}\label{lemB_3_1}
  For any $\epsilon>0$, $m\in \mathbb{N}$,  and compact subset $K$ of $ I \setminus \{(c,\gamma)\mid c_1=-1\textrm{ or }\gamma=\gamma^+(c)\}$, 
   there exists some positive constant $C$, depending only on $m$, $K$, and $\epsilon$, such that
   \begin{equation*}
      \sum_{1\le |\alpha|+j\le m}|\partial^{\alpha}_c\partial_{\gamma}^j U_{\theta}|\le C,\quad -1<x<1-\epsilon.
   \end{equation*}
\end{lem}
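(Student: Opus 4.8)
The plan is to induct on the order $N:=|\alpha|+j$ of the derivative. The key point is that, writing $D:=\partial_c^\alpha\partial_\gamma^j$ and $W:=D\,U_\theta$, differentiating the equation in (\ref{eq_prob}) turns the quadratic ODE for $U_\theta$ into a \emph{linear} ODE for $W$. Applying $D$ to $(1-x^2)U_\theta'+2xU_\theta+\tfrac12U_\theta^2=P_c$ and peeling off the top-order Leibniz terms of $D(\tfrac12U_\theta^2)$ (the two extreme terms combine to $U_\theta W$) gives
\[
   (1-x^2)W'+(2x+U_\theta)W=F,\qquad W(0)=D\gamma,
\]
where $F:=D P_c-\tfrac12\sum{}'(D'U_\theta)(D''U_\theta)$ and $\sum'$ runs over the Leibniz splittings $D=D'D''$ into two operators each of order between $1$ and $N-1$ (binomial weights suppressed). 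Since $P_c$ is affine in $c$ and polynomial in $x$, $DP_c$ is bounded by a constant on $|x|\le1$ (and vanishes unless $j=0,|\alpha|=1$); and by the inductive hypothesis, with base case $N=0$ supplied by Lemma \ref{lem2_0_0}, every factor in $\sum'$ is already bounded on $K\times(-1,1-\epsilon)$. Hence $\|F\|_{L^\infty(-1,1-\epsilon)}\le C(m,K,\epsilon)$, while $W(0)=D\gamma\in\{0,1\}$.

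Next I solve the linear ODE by the integrating factor based at $x=0$. With $b(x):=(2x+U_\theta)/(1-x^2)$ and $\mu(x):=\exp\int_0^x b$, one has
\[
   W(x)=\mu(x)^{-1}W(0)+\mu(x)^{-1}\int_0^x\mu(s)\,\frac{F(s)}{1-s^2}\,ds,\qquad -1<x<1.
\]
On $[-1+\delta,1-\epsilon]$ the factors $\mu^{\pm1}$, $F$ and $(1-s^2)^{-1}$ are bounded above, and $\mu^{\pm1}$ also below, uniformly for $(c,\gamma)\in K$, so $|W|\le C$ there by the standard smooth dependence already noted. The whole difficulty is the limit $x\to-1^+$. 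Here I exploit that $K$ avoids both $c_1=-1$ and $\gamma=\gamma^+(c)$: by Theorem \ref{thm1_3}(ii), $U_\theta(-1)=\tau_1(c_1)$, and $\alpha_0:=\sqrt{1+c_1}$ is bounded below by some $\alpha_*>0$ on $K$. Decomposing $2x+U_\theta=2(1+x)-2\alpha_0+(U_\theta-\tau_1(c_1))$ and invoking the quantitative decay $|U_\theta-\tau_1(c_1)|\le C(1+x)^{\min\{\alpha_0,1\}-\epsilon_0}$ from Lemma \ref{lemAp_3} with a fixed $\epsilon_0<\min\{\alpha_*,1\}$, the $(U_\theta-\tau_1(c_1))$-contribution to $\int_0^x b$ is absolutely integrable down to $x=-1$, whereas the $-2\alpha_0$ term produces the leading logarithm. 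This yields the uniform two-sided estimate $c\,(1+x)^{-\alpha_0}\le\mu(x)\le C\,(1+x)^{-\alpha_0}$ near $x=-1$.

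With this asymptotic the representation is estimated directly. The boundary term satisfies $\mu(x)^{-1}|W(0)|\le C(1+x)^{\alpha_0}\le C$ since $\alpha_0>0$ and $1+x<1$. For the integral term, $\mu(s)(1-s^2)^{-1}\le C(1+s)^{-\alpha_0-1}$ near $s=-1$, so $\int_x^0\mu(s)(1-s^2)^{-1}\,ds\le C(1+x)^{-\alpha_0}+C$; multiplying by $\mu(x)^{-1}\le C(1+x)^{\alpha_0}$ and using $\|F\|_\infty\le C$ shows this term is bounded independently of $x\in(-1,0)$. Combining the two regimes gives $|W(x)|\le C(m,K,\epsilon)$ throughout $(-1,1-\epsilon)$, and summing over $1\le|\alpha|+j\le m$ completes the induction.

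The main obstacle is exactly the uniform control $\mu(x)\asymp(1+x)^{-\alpha_0}$ near $x=-1$ with constants independent of $(c,\gamma)\in K$; this is where the sharp decay of Lemma \ref{lemAp_3} and the exclusion of $c_1=-1$ enter, the latter guaranteeing $\alpha_0\ge\alpha_*>0$ so that the growth $(1+s)^{-\alpha_0-1}$ of the integrand is compensated by the decay $(1+x)^{\alpha_0}$ of $\mu(x)^{-1}$ up to an $O(1)$ factor. By comparison, the Leibniz bookkeeping keeping $F$ of strictly lower order, and the bulk region $[-1+\delta,1-\epsilon]$, are routine.
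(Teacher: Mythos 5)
Your proof is correct and follows essentially the same route as the paper's: induction on the order of the derivative, linearization of the ODE for $\partial_c^\alpha\partial_\gamma^j U_\theta$ with lower-order Leibniz terms as the inhomogeneity, the integrating factor $\mu(x)=e^{a(x)}$ based at $x=0$, and the uniform asymptotics $\mu(x)\asymp(1+x)^{-\sqrt{1+c_1}}$ near $x=-1$ derived from Lemma \ref{lem2_0_0} and Lemma \ref{lemAp_3}, with the exclusion of $c_1=-1$ and $\gamma=\gamma^+(c)$ guaranteeing $U_\theta(-1)=\tau_1(c_1)<2$ uniformly. Your write-up is in fact somewhat more explicit than the paper's about the two-sided control of $\mu$ and the estimate of the integral term.
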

\begin{proof}
 We prove the lemma by induction. We use $C(m,K,\epsilon)$ and $C$ to denote constants which may be different from line to line,  and their dependence is clear from the context.
 
	We know by (\ref{eq_prob}) that 
	$$
		(1-x^2) \left( \frac{\pt U_{\theta}}{\pt \gamma}\right) ' + \left(2x + U_{\theta} \right) \left( \frac{\pt U_{\theta}}{\pt \gamma}\right) = 0, 
	$$
	$$
		(1-x^2) \left( \frac{\pt U_{\theta}}{\pt c_i}\right) ' + \left(2x + U_{\theta}\right) \left( \frac{\pt U_{\theta}}{\pt c_i}\right) = \partial_{c_i}P_c(x),
	$$
	and $\frac{\pt U_{\theta}(0)}{\pt \gamma}=1$, $\frac{\pt U_{\theta}(0)}{\pt c_i}=0$, $i=1,2,3$.
Denote 
	\begin{equation}\label{eqB_3_1_1}
		a(x) = a_{c,\gamma}(x) = \int_0^x \frac{2s+U_{\theta}}{1-s^2} ds. 
	\end{equation}
	Then
\begin{equation}\label{eqB_3_2}
		 \frac{\pt U_{\theta}}{\pt \gamma} =  e^{-a(x)},
		 \end{equation}
		 and for $i=1,2,3$,
\begin{equation}\label{eqB_3_3}		 
		 \frac{\pt U_{\theta}}{\pt c_i} =  e^{-a(x)}  \int_0^x e^{a(s)} \frac{\partial_{c_i}P_c(s)}{1-s^2} ds.
	\end{equation}
	 
By the definition of $a(x)$, Lemma \ref{lem2_0_0} and Lemma \ref{lemAp_3}, we have that there exists some constant $C=C(\epsilon, K)$ such that
	$$
		e^{-a(x)} \le C(1+x)^{1-\frac{U_{\theta}(-1)}{2}}, \quad e^{a(x)} \le C (1+x)^{\frac{U_{\theta}(-1)}{2}-1},\quad -1<x<1-\epsilon. 
	$$
	Since when $(c,\gamma)\in K$, $U_{\theta}(-1)<2$, there exists some $C(K,\epsilon)$, such that $e^{-a(x)}\le C(K,\epsilon)$. Thus by (\ref{eqB_3_2}) and (\ref{eqB_3_3}) we have that for $-1<x<1-\epsilon$,
	\begin{equation*}
		\sum_{|\alpha|+j=1}|\pt_{c}^{\alpha}\pt_\gamma^j U_{\theta}| \le C(K,\epsilon).
			\end{equation*}

 Now for $m\ge 2$, suppose that $C(m_1,K,\epsilon)$ exist for all $1\le m_1\le m-1$, then for any $|\alpha|+j=m$,
 \[
   (1-x^2)(\partial_c^{\alpha}\partial_{\gamma}^jU_{\theta})'+2x\partial_c^{\alpha}\partial_{\gamma}^jU_{\theta}+\frac{1}{2}\partial_c^{\alpha}\partial_{\gamma}^j(U_{\theta}^2)=\partial_c^{\alpha}\partial_{\gamma}^jP_c(x).
 \]
This leads to
 \[
     (1-x^2)(\partial_c^{\alpha}\partial_{\gamma}^jU_{\theta})'+(2x+U_{\theta})\partial_c^{\alpha}\partial_{\gamma}^jU_{\theta}=h,
     \]
     where 
     \[
        h:=-\frac{1}{2}\sum_{0\le (\alpha_1, j_1)\le (\alpha,j),0<|\alpha_1|+j_1<m}\left( \begin{matrix}
		\alpha\\
		\alpha_1
	\end{matrix} \right)\left( \begin{matrix}
		j\\
		j_1
	\end{matrix} \right)\partial_c^{\alpha_1}\partial_{\gamma}^{j_1}U_{\theta}\partial_c^{\alpha-\alpha_1}\partial_{\gamma}^{j-j_1}U_{\theta}.
 \]
 Notice that $\partial_c^{\alpha}\partial_{\gamma}^jU_{\theta}(0)=0$ for all $|\alpha|+j\ge 2$, we have
 \[
    \partial_c^{\alpha}\partial_{\gamma}^jU_{\theta}=
    e^{-a(x)}\int_{0}^{x}e^{a(s)}\frac{h(s)}{1-s^2}ds.
 \]

 By the induction assumption, $h\in L^{\infty}(-1,1-\epsilon)$ and there exists some positive constant $C$, depending only on $m$, $K$, and $\epsilon$ such that $|h|_{L^{\infty}(-1,1-\epsilon)}\le C$.  
 So we have 
 \[
      |\partial_c^{\alpha}\partial_{\gamma}^jU_{\theta}|_{L^{\infty}(-1,1-\epsilon)}\le C.
 \]
The proof is finished.
\end{proof}

Similarly, using Lemma \ref{lem2_0_0} and Lemma \ref{lemAp_3'} we have 

\addtocounter{lem}{-1}
\renewcommand{\thelem}{\thesection.\arabic{lem}'}%
\begin{lem}\label{lemB_3_1'}
    For any $\epsilon>0$, $m\in \mathbb{N}$,  and compact subset $K$ of $I\setminus\{ (c,\gamma)\mid c_2=-1 \textrm{ or }\gamma=\gamma^-(c)\}$,      there exists some positive constant $C$, depending only on $m$, $K$, and $\epsilon$, such that
   \begin{equation*}
     \sum_{1\le |\alpha|+j\le m} |\partial^{\alpha}_c\partial_{\gamma}^jU_{\theta}|\le C, \quad -1+\epsilon<x<1.
   \end{equation*}
\end{lem}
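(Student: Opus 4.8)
The plan is to deduce Lemma \ref{lemB_3_1'} from Lemma \ref{lemB_3_1} by the reflection $x\mapsto -x$, which is the cleanest way to make the word ``similarly'' precise; running the argument directly near $x=1$ is an equivalent alternative. Given $U_\theta=U_\theta^{c,\gamma}$ solving (\ref{eq_prob}) with $c=(c_1,c_2,c_3)$, set $\tilde U_\theta(x):=-U_\theta(-x)$, exactly the substitution already used in the proof of Lemma \ref{lem:pre1'}. A one-line substitution into (\ref{eq:UthP}) shows that $\tilde U_\theta$ solves the same equation with the permuted parameters $\tilde c:=(c_2,c_1,c_3)$ and with $\tilde U_\theta(0)=-\gamma$, so $\tilde U_\theta=U_\theta^{\tilde c,\tilde\gamma}$, $\tilde\gamma=-\gamma$. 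Under this involution the endpoints $x=1$ and $x=-1$ are interchanged, $c_1$ and $c_2$ are interchanged, and, because $\tilde U_\theta(-1)=-U_\theta(1)$, the two branches are swapped: the condition $\gamma=\gamma^-(c)$ becomes $\tilde\gamma=\gamma^+(\tilde c)$ and $c_2=-1$ becomes $\tilde c_1=-1$. Hence the excluded set $\{c_2=-1\text{ or }\gamma=\gamma^-(c)\}$ is carried exactly onto $\{\tilde c_1=-1\text{ or }\tilde\gamma=\gamma^+(\tilde c)\}$, so the reflected image $\tilde K$ of the compact set $K$ lies in the admissible set for Lemma \ref{lemB_3_1}.

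Next I would record the transformation of derivatives. Since $\partial_\gamma=-\partial_{\tilde\gamma}$, the derivatives $\partial_{c_1},\partial_{c_2}$ are interchanged and $\partial_{c_3}$ is unchanged, one has the pointwise identity $|\partial_c^\alpha\partial_\gamma^j U_\theta(x)|=|\partial_{\tilde c}^{\tilde\alpha}\partial_{\tilde\gamma}^j\tilde U_\theta(-x)|$ for the permuted multi-index $\tilde\alpha=(\alpha_2,\alpha_1,\alpha_3)$. Because $\alpha\mapsto\tilde\alpha$ preserves $|\alpha|$, summing over $1\le|\alpha|+j\le m$ turns the left-hand side of the asserted inequality on $-1+\epsilon<x<1$ into the corresponding sum for $\tilde U_\theta$ on $-1<\tilde x<1-\epsilon$. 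Applying Lemma \ref{lemB_3_1} to $\tilde U_\theta$ over $\tilde K$ then gives the bound with constant $C=C(m,\tilde K,\epsilon)=C(m,K,\epsilon)$, which is the claim.

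If one prefers the direct route (this is what ``similarly, using Lemma \ref{lem2_0_0} and Lemma \ref{lemAp_3'}'' refers to), the integrating-factor computation (\ref{eqB_3_1_1})--(\ref{eqB_3_3}) is repeated verbatim, with the decay estimate of Lemma \ref{lemAp_3} near $x=-1$ replaced by Lemma \ref{lemAp_3'} near $x=1$ and the uniform bound of Lemma \ref{lem2_0_0}. The only place the hypotheses on $K$ enter is the mirror sign condition: on $K$ we have $c_2>-1$ and $\gamma\ne\gamma^-(c)$, hence $U_\theta(1)=\tau_2'(c_2)=-2+2\sqrt{1+c_2}>-2$ by Theorem \ref{thm1_3}(ii). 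This is exactly what forces the integrating factor $a(x)=\int_0^x(2s+U_\theta)/(1-s^2)\,ds$ to tend to $+\infty$ like $-\sqrt{1+c_2}\,\ln(1-x)$ as $x\to1^-$, so that $\partial_\gamma U_\theta=e^{-a(x)}$ stays bounded and, in the representation for $\partial_{c_i}U_\theta$, the growth of $\int_0^x e^{a}(\partial_{c_i}P_c)/(1-s^2)\,ds$ near $x=1$ is canceled by the decay of $e^{-a(x)}$---the precise analog of the bound $e^{-a(x)}\le C$ obtained from $U_\theta(-1)<2$ in Lemma \ref{lemB_3_1}. The induction on $m$ is then identical: one differentiates (\ref{eq:UthP}), uses $\partial_c^\alpha\partial_\gamma^j U_\theta(0)=0$ for $|\alpha|+j\ge2$, and bounds the bilinear inhomogeneity by the inductive hypothesis. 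I expect no real obstacle here; the only thing requiring care is the bookkeeping of the reflection, namely checking that the branches $\gamma^\pm$ and the parameters $c_1,c_2$ are interchanged correctly so that $K$ genuinely maps into the admissible set of Lemma \ref{lemB_3_1}.
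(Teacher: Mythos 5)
Your proposal is correct and takes essentially the same approach as the paper, whose entire proof of this lemma is the remark ``similarly, using Lemma \ref{lem2_0_0} and Lemma \ref{lemAp_3'}'' pointing back to Lemma \ref{lemB_3_1}; your ``direct route'' is exactly that argument, with the key point (namely $U_\theta(1)=\tau_2'(c_2)>-2$ forcing $e^{-a(x)}$ to decay near $x=1$) correctly identified. The reflection $\tilde U_\theta(x)=-U_\theta(-x)$ is a clean way to make the ``similarly'' rigorous, and your bookkeeping of how it swaps $c_1\leftrightarrow c_2$ and $\gamma^-\leftrightarrow\gamma^+$ is accurate.
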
	
\renewcommand{\thelem}{\thesection.\arabic{lem}}%

\begin{lem}\label{lemB_3_2}
  For any $\epsilon>0$, $m\in \mathbb{N}$,  and compact subset $K$ of $I\cap \{(c,\gamma)\mid c_1=-1, \gamma<\gamma^+(c)\}$,
  there exists some positive constant $C$, depending only on $m$, $K$, and $\epsilon$, such that 
   \begin{equation*}
      \sum_{1\le |\alpha|+j\le m, \alpha_1=0}\left(\ln \frac{1+x}{3}\right)^2|\partial^{\alpha}_c\partial_{\gamma}^jU_{\theta}|\le C, \quad -1<x<1-\epsilon.
   \end{equation*}
\end{lem}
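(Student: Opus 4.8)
The plan is to run the integrating-factor argument of Lemma \ref{lemB_3_1}, but now tracking the logarithmic degeneration of the integrating factor caused by $c_1=-1$, which forces $U_\theta(-1)=2$. Writing $U_\theta=U_\theta^{c,\gamma}$ and differentiating (\ref{eq_prob}) in the parameters exactly as in Lemma \ref{lemB_3_1}, each derivative $\partial_c^\alpha\partial_\gamma^j U_\theta$ with $\alpha_1=0$ (no $c_1$-differentiation, consistent with the restriction to $c_1=-1$) satisfies
\[
(1-x^2)\bigl(\partial_c^\alpha\partial_\gamma^j U_\theta\bigr)'+(2x+U_\theta)\,\partial_c^\alpha\partial_\gamma^j U_\theta=h,
\]
where $h=0$ for $\partial_\gamma$, $h=\partial_{c_i}P_c$ in the remaining first-order cases, and for $|\alpha|+j=m\ge2$,
\[
h=-\frac12\sum_{\substack{0\le(\beta,j_1)\le(\alpha,j)\\ 0<|\beta|+j_1<m}}\binom{\alpha}{\beta}\binom{j}{j_1}\,\partial_c^\beta\partial_\gamma^{j_1}U_\theta\cdot\partial_c^{\alpha-\beta}\partial_\gamma^{j-j_1}U_\theta.
\]
Since $\alpha_1=0$ forces every summand to have both factors free of $c_1$-derivatives, the induction stays inside the class covered by the lemma. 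With $a(x)=\int_0^x\frac{2s+U_\theta}{1-s^2}\,ds$ as in (\ref{eqB_3_1_1}), and using that $\partial_c^\alpha\partial_\gamma^j U_\theta$ vanishes at $x=0$ when $|\alpha|+j\ge2$, the solutions are $\partial_\gamma U_\theta=e^{-a(x)}$ and $\partial_c^\alpha\partial_\gamma^j U_\theta=e^{-a(x)}\int_0^x e^{a(s)}\frac{h(s)}{1-s^2}\,ds$.

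The crucial step, and the one distinguishing this lemma from Lemma \ref{lemB_3_1}, is the precise asymptotics of $a$ near $x=-1$. Writing $2s+U_\theta=(U_\theta-2)+2(1+s)$, the term $\frac{2(1+s)}{1-s^2}=\frac{2}{1-s}$ is bounded and contributes only $O(1)$. For the remaining part I would invoke the sharpened expansion of Lemma \ref{lemAp_4}, namely $U_\theta-2=\frac{4}{\ln\frac{1+x}{3}}+O\bigl(|\ln\frac{1+x}{3}|^{-2+\epsilon}\bigr)$. Substituting and using $\frac{1}{1-s^2}=\frac{1}{2(1+s)}+O(1)$ near $s=-1$, the main term becomes $\frac{2}{(1+s)\ln\frac{1+s}{3}}$, whose integral is $2\ln\bigl|\ln\frac{1+x}{3}\bigr|$, while the error $\frac{|\ln\frac{1+s}{3}|^{-2+\epsilon}}{1+s}$ is integrable near $s=-1$ (the substitution $u=\ln\frac{1+s}{3}$ reduces it to $\int|u|^{-2+\epsilon}\,du<\infty$ for $\epsilon<1$). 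Hence $a(x)=2\ln\bigl|\ln\frac{1+x}{3}\bigr|+O(1)$ uniformly for $(c,\gamma)\in K$, giving the two-sided control
\[
e^{-a(x)}\le C\Bigl|\ln\tfrac{1+x}{3}\Bigr|^{-2},\qquad e^{a(x)}\le C\Bigl|\ln\tfrac{1+x}{3}\Bigr|^{2},\qquad -1<x<1-\epsilon.
\]
It is precisely the exactness of the coefficient $4$, and not a mere boundedness estimate, that produces the sharp power $2$ matching the weight in the statement; the weaker Lemma \ref{lemAp_2} would only yield exponent $2-O(\epsilon)$.

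With these bounds the base case is immediate: $(\ln\frac{1+x}{3})^2|\partial_\gamma U_\theta|=(\ln\frac{1+x}{3})^2e^{-a(x)}\le C$, and for $i=2,3$ the factor $\frac{\partial_{c_i}P_c}{1-s^2}$ is bounded near $s=-1$ (it equals $\frac{1}{1-s}$ and $1$ respectively), so $\int_0^x e^{a(s)}\frac{\partial_{c_i}P_c}{1-s^2}\,ds$ is bounded because $\int_{-1}^0|\ln\frac{1+s}{3}|^2\,ds<\infty$; multiplying by $e^{-a(x)}$ gives the weighted bound. For the induction step, by the inductive hypothesis both factors in each summand of $h$ have order between $1$ and $m-1$, hence decay like $|\ln\frac{1+x}{3}|^{-2}$, so $|h|\le C|\ln\frac{1+x}{3}|^{-4}$; then $e^{a(s)}\frac{|h|}{1-s^2}\le C\frac{|\ln\frac{1+s}{3}|^{-2}}{1+s}$ is integrable near $s=-1$, the integral is bounded, and $(\ln\frac{1+x}{3})^2|\partial_c^\alpha\partial_\gamma^j U_\theta|\le C(\ln\frac{1+x}{3})^2e^{-a(x)}\le C$. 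Away from $x=-1$, on any $[-1+\delta,1-\epsilon]$, the weight is bounded and standard smooth dependence on parameters gives the estimate trivially (using $\mbox{Lemma}$ \ref{lem2_0_0} for uniform bounds on $U_\theta$).

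The main obstacle is the integrating-factor asymptotic: everything hinges on pinning down $a(x)=2\ln|\ln\frac{1+x}{3}|+O(1)$ with the \emph{exact} exponent $2$, which forces the use of the sharpened Lemma \ref{lemAp_4} in place of Lemma \ref{lemAp_2}, and on verifying that the resulting weights make the integrals converge near $s=-1$. The quadratic structure of $h$ is exactly what upgrades the decay from $|\ln|^{-2}$ to $|\ln|^{-4}$, which is precisely the margin needed for $\int e^{a}\frac{h}{1-s^2}$ to converge.
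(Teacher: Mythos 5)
Your proposal is correct and follows essentially the same route as the paper: the integrating factor $a(x)=\int_0^x\frac{2s+U_\theta}{1-s^2}\,ds$, the sharpened asymptotics of Lemma \ref{lemAp_4} to pin down $e^{\pm a(x)}\le C\left|\ln\frac{1+x}{3}\right|^{\pm 2}$, the observation that $\partial_{c_i}P_c$ for $i=2,3$ vanishes like $1+x$ at $x=-1$, and the induction on $|\alpha|+j$ exploiting the quadratic structure of $h$ to gain the extra $\left|\ln\frac{1+x}{3}\right|^{-4}$ decay. The only (immaterial) difference is the choice of base point for the Duhamel formula in the induction step.
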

\begin{proof}
We prove the lemma by induction. Denote $C(m,K,\epsilon)$ and $C$ to be constants which may vary from line to line, and their dependence is clear from the context. Similar as the proof of Lemma \ref{lemB_3_1}, we have (\ref{eqB_3_2}) and (\ref{eqB_3_3}) where $a(x)$ is defined by (\ref{eqB_3_1_1}). 
By the definition of $a(x)$, Lemma \ref{lem2_0_0} and Lemma \ref{lemAp_4},  there exists some constant $C=C(m,K,\epsilon)$, such that
	$$
		e^{-a(x)} \le C\left(\ln\frac{1+x}{3}\right)^{-2}, \quad \quad e^{a(x)} \le C\left(\ln\frac{1+x}{3}\right)^{2}. 
	$$
	Notice in this case, $i=2$ or $3$ in (\ref{eqB_3_3}), and $|\partial_{c_i}P_c|\le C(1+x)$ for some constant $C$ depending only on $K$, so we have that for $-1<x<1-\epsilon$,
	\begin{equation*}
		\sum_{|\alpha|+j=1} \left(\ln \frac{1+x}{3}\right)^2|\pt_{c}^{\alpha}\pt_\gamma^j U_{\theta}| \le C(K,\epsilon).
			\end{equation*}
 
 Now suppose that $C(m_1,K,\epsilon)$ exists for all $1\le m_1\le m-1$. As in the proof of the previous lemma we have, for all $ |\alpha|+j=m$ and $\alpha_1=0$, that
  \[
    \partial_c^{\alpha}\partial_{\gamma}^j U_{\theta}=Ce^{-a(x)}+e^{-a(x)}\int_{-1+\frac{\delta}{2}}^{x}e^{a(s)}\frac{h(s)}{1-s^2}ds,
 \] 
 where 
  \[
     h:=-\frac{1}{2}\sum_{0\le (\alpha_1,j_1)\le (\alpha, j), 0<|\alpha_1|+j_1<m}\left( \begin{matrix}
		\alpha\\
		\alpha_1
	\end{matrix} \right)\left( \begin{matrix}
		j\\
		j_1
	\end{matrix} \right)\partial_c^{\alpha_1}\partial_{\gamma}^{j_1}U_{\theta}\partial_c^{\alpha-\alpha_1}\partial_{\gamma}^{j-j_1}U_{\theta}.
 \]
 Then, by the induction assumption, $h\in L^{\infty}(-1,1-\epsilon)$ and there is some positive constant $C$, depending only on $m$, $K$, and $\epsilon$, such that $ \left(\ln \frac{1+x}{3}\right)^4|h(x)|\le C$ for all $-1<x<1-\epsilon$. Using this estimate we then have 
 \[
    \left(\ln \frac{1+x}{3}\right)^2|\pt_{c}^{\alpha}\pt_\gamma^j U_{\theta}| \le C.
 \]
 
 The lemma is proved.
\end{proof}

Similarly, using Lemma \ref{lemAp_4'} we have 
\addtocounter{lem}{-1}
\renewcommand{\thelem}{\thesection.\arabic{lem}'}%
\begin{lem}\label{lemB_3_2'}
    For any $\epsilon>0$, $m\in \mathbb{N}$,  and compact subset $K$ of $I\cap \{c,\gamma)\mid c_2=-1, \gamma>\gamma^-(c)\}$, 
    there exists some positive constant $C$, depending only on $m$, $K$, and $\epsilon$, such that
   \begin{equation*}
      \sum_{1\le |\alpha|+j\le m, \alpha_2=0}\left(\ln \frac{1-x}{3}\right)^2|\partial^{\alpha}_c\partial_{\gamma}^jU_{\theta}|\le C(m,K,\epsilon),\quad -1+\epsilon<x<1.
   \end{equation*}
\end{lem}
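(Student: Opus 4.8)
The plan is to obtain this primed statement from its unprimed counterpart, Lemma \ref{lemB_3_2}, by the reflection substitution $\tilde U_\theta(x) := -U_\theta(-x)$ that the paper already uses to derive all the primed lemmas (for instance in the proof of Lemma \ref{lem:pre1'}). First I would record how the substitution acts on (\ref{eq:UthP}): writing $y=-x$ and using $\tilde U_\theta'(x)=U_\theta'(-x)$, a direct computation shows that if $U_\theta$ solves $(1-x^2)U_\theta'+2xU_\theta+\tfrac12U_\theta^2=P_c(x)$, then $\tilde U_\theta$ solves the same equation with $P_c$ replaced by $P_c(-x)=c_2(1-x)+c_1(1+x)+c_3(1-x^2)=P_{\tilde c}(x)$, where $\tilde c:=(c_2,c_1,c_3)$. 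Thus $\tilde U_\theta$ satisfies (\ref{eq:UthP}) with parameters $\tilde c$, and $\tilde U_\theta(0)=-U_\theta(0)=-\gamma=:\tilde\gamma$.

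Next I would verify that the affine map $(c,\gamma)\mapsto(\tilde c,\tilde\gamma)=\big((c_2,c_1,c_3),-\gamma\big)$ carries the domain of this lemma onto the domain of Lemma \ref{lemB_3_2}. Since $\bar c_3(c_1,c_2)$ in (\ref{sec2:eq:bar:c3}) is symmetric in $(c_1,c_2)$, the constraint $c\in J$ is preserved, so $(\tilde c,\tilde\gamma)\in I$. Reflection reverses the pointwise order of solutions, so the maximal and minimal solutions swap roles: $\widetilde{U_\theta^+(c)}=U_\theta^-(\tilde c)$ and $\widetilde{U_\theta^-(c)}=U_\theta^+(\tilde c)$, whence $\gamma^+(\tilde c)=-\gamma^-(c)$ and $\gamma^-(\tilde c)=-\gamma^+(c)$. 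Consequently $c_2=-1$ becomes $\tilde c_1=-1$, and the condition $\gamma>\gamma^-(c)$ becomes $\tilde\gamma=-\gamma<-\gamma^-(c)=\gamma^+(\tilde c)$. Hence a compact $K\subset I\cap\{c_2=-1,\gamma>\gamma^-(c)\}$ is mapped to a compact $\tilde K\subset I\cap\{\tilde c_1=-1,\tilde\gamma<\gamma^+(\tilde c)\}$, which is exactly the set allowed in Lemma \ref{lemB_3_2}. By the uniqueness in Theorem \ref{thm1_2}, $\tilde U_\theta=U_\theta^{\tilde c,\tilde\gamma}$, so Lemma \ref{lemB_3_2} applies to it and yields, for $-1<\tilde x<1-\epsilon$,
\[
   \sum_{1\le|\tilde\alpha|+j\le m,\ \tilde\alpha_1=0}\Big(\ln\tfrac{1+\tilde x}{3}\Big)^2\big|\partial_{\tilde c}^{\tilde\alpha}\partial_{\tilde\gamma}^jU_\theta^{\tilde c,\tilde\gamma}(\tilde x)\big|\le C(m,\tilde K,\epsilon).
\]

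Finally I would translate this bound back by the chain rule. Because the parameter map is linear, $\partial_{c_1}=\partial_{\tilde c_2}$, $\partial_{c_2}=\partial_{\tilde c_1}$, $\partial_{c_3}=\partial_{\tilde c_3}$, $\partial_\gamma=-\partial_{\tilde\gamma}$; a multi-index $\alpha=(\alpha_1,\alpha_2,\alpha_3)$ in $c$ corresponds to $\tilde\alpha=(\alpha_2,\alpha_1,\alpha_3)$ in $\tilde c$, so the constraint $\alpha_2=0$ matches precisely the constraint $\tilde\alpha_1=0$ in Lemma \ref{lemB_3_2}. Using $U_\theta^{c,\gamma}(x)=-U_\theta^{\tilde c,\tilde\gamma}(-x)$ and setting $\tilde x=-x$, each $|\partial_c^{\alpha}\partial_\gamma^jU_\theta^{c,\gamma}(x)|$ equals the corresponding $|\partial_{\tilde c}^{\tilde\alpha}\partial_{\tilde\gamma}^jU_\theta^{\tilde c,\tilde\gamma}(\tilde x)|$ (the signs are irrelevant under absolute values), while $\ln\frac{1-x}{3}=\ln\frac{1+\tilde x}{3}$ and the range $-1+\epsilon<x<1$ becomes $-1<\tilde x<1-\epsilon$. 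This gives exactly the asserted estimate. The only real work is this bookkeeping of the reflection; there is no new analytic difficulty, since all the weighted-growth input near the pole is supplied by Lemma \ref{lemAp_4'} (equivalently, Lemma \ref{lemAp_4} after reflection). Alternatively, one may simply repeat the induction in the proof of Lemma \ref{lemB_3_2} verbatim, replacing Lemma \ref{lemAp_4} by Lemma \ref{lemAp_4'} so that $e^{\pm a(x)}\le C\big(\ln\frac{1-x}{3}\big)^{\mp2}$ near $x=1$; I expect the reflection route to be cleaner and less error-prone.
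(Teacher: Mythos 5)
Your proposal is correct and is essentially the paper's own argument: the paper disposes of this primed lemma with ``Similarly, using Lemma \ref{lemAp_4'} we have,'' i.e.\ by rerunning the induction of Lemma \ref{lemB_3_2} under the reflection $x\mapsto -x$, $U_\theta\mapsto -U_\theta(-\cdot)$, $c_1\leftrightarrow c_2$, $\gamma\mapsto-\gamma$, which is exactly the bookkeeping you carry out explicitly (and which the paper itself uses to derive its other primed statements, e.g.\ Lemma \ref{lem:pre1'}). Your verification that the reflection swaps $\gamma^{\pm}$ to $-\gamma^{\mp}$ and sends the constraint $\alpha_2=0$ to $\tilde\alpha_1=0$ is accurate, so no further detail is needed.
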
	
\renewcommand{\thelem}{\thesection.\arabic{lem}}%

Theorem \ref{propA_1} follows from Corollary \ref{cor2_7}, Corollary \ref{cor2_7'}, Lemma \ref{lemB_3_1}, \ref{lemB_3_1'}, \ref{lemB_3_2} and \ref{lemB_3_2'}.

\bigskip

\end{document}